\newtheorem{theorem}{Theorem}[section]
\newtheorem{corollary}[theorem]{Corollary}
\newtheorem{lemma}[theorem]{Lemma}
\newtheorem{remark}{Remark}[section]
\newenvironment{proof}{{\bf Proof }}{\hbox{~} \hfill \rule{0.5em}{0.5em}\\}
\numberwithin{equation}{section}
\newcommand{\ds}{\displaystyle}
\newcommand{\R}{\mathbb{R}}
\newcommand{\torus}{{\mathbb{T}}}
\newcommand{\gu}{\mathbf{u}}
\newcommand{\gm}{\mathbf{m}}
\newcommand{\zep}{{\mathcal{Z}^{\epsilon}}}
\title{Singularly perturbed degenerated parabolic equations and application to seabed 
 morphodynamics in tided environment}
\author{Ibrahima FAYE\thanks{Université de Bambey, BP 30 Bambey (Senegal), 
Ecole Doctorale de Mathématiques et Informatique. 
Laboratoire de Mathématiques de la décision et d'Analyse Numérique (L.M.D.A.N) F.A.S.E.G)/F.S.T. 
grandmbodj@hotmail.com}
\and
Emmanuel FRENOD\thanks{Universit\'e Europ\'eenne de Bretagne, Lab-STICC (UMR CNRS 3192),
Universit\'e de Bretagne-Sud, Centre Yves Coppens, Campus de Tohannic,
F-56017, Vannes, France.
emmanuel.frenod@univ-ubs.fr}
\and
Diaraf SECK\thanks{Universit\'{e} Cheikh Anta Diop de Dakar,
 BP 16 889, Dakar-Fann (Senegal).
Ecole Doctorale de Math\'ematiques et Informatique.
 Laboratoire de Math\'ematiques de la D\'ecision et
 d'Analyse Num\'erique (L.M.D.A.N) F.A.S.E.G/F.S.T
 {\bf \&}
 UMMISCO,  UMI 209, IRD, France.
 dseck@ucad.sn}}
\begin{document}
\maketitle

\pagestyle{myheadings}
 \renewcommand{\sectionmark}[1]{\markboth{#1}{}}
\renewcommand{\sectionmark}[1]{\markright{\thesection\ #1}}

\begin{abstract}\noindent
In this paper we build models for short-term, mean-term 
and long-term dynamics of dune and megariple morphodynamics. They are models that are
degenerated parabolic equations which are, moreover, singularly perturbed. We, then give an
existence and uniqueness result for the short-term and mean-term
models. This result is based on a time-space periodic solution existence result for degenerated parabolic equation that we set out. Finally the short-term model is homogenized. \\

\begin{center}{\bf Keywords} \end{center}Ò
Degenerated Parabolic Equation; Space-Time Periodic Solutions; Homogenization; 
Asymptotic Analysis; Asymptotic Expansion; Long Time Behavior; 
Dune and Megaripple Morphodynamics; Modeling Coastal Zone Phenomena. \\

\begin{center}{\bf AMS Mathematics  Subject Classification} \end{center}
35K65 (Degenerate parabolic equations); 
35B25  (Singular perturbations);
35B40  (Asymptotic behavior of solutions);
35B10  (Periodic solutions);
92F05   (Other natural sciences [Sedimentology]);
86A60   (Geological problems).
\end{abstract}
\section{Introduction and results}
Dune and megaripple generation and dynamics, on the seabed over a continental shelf, 
are the results of interaction between the seabed and water currents.
The study of the physical processes allowing for the generation
of dunes, or governing their evolution or stability involves modeling and numerical simulation. Roughly speaking, the models in use essentially couple an equation for the fluid fields (Navier-Stokes or shallow water equations) to an equation describing sand transport on the seabed. Those methods were used with success in
DeVriend \cite{DeVriendThese},
Engelund and Hansen \cite{EngHan},
Kennedy \cite{Kenn},
Blondeau \cite{Blondeau},
Dawson,  Johns  and Soulsby   \cite{DawJohSoul},
Johns, Soulsby and Chesher \cite{JohSouChe},
 Idier \cite{Idier}
and Idier, Astruc and Hulsher \cite{IdierAsH}.\\
A careful watch reveals that the use of numerical simulation for the understanding of dune dynamics within tide-influenced environment is essentially not efficient. The reason why is that tide oscillation generally prompts a coming and going of large sand volumes
 having a very weak resulting effect on dune evolution. As a consequence, questions concerning dune morphodynamics or stability have to be considered over large periods
of time, making the computation cost expensive.\\
Since many dune fields are present in strong tide region (English Channel, Celtic Sea, Irish Sea, North Sea, etc.) the setting out of methods to tackle dune morphodynamics in tide influenced environments is an important challenge. The aim of this paper is to carry out modeling methods and asymptotical methods for this. More precisely, we focus on linear models for seabed evolution and on 
methods which allow the removal of the explicit
  presence of the tide oscillations from them.

~

As will be seen in section 2,
 for a small parameter $\epsilon$ and constants $a,b$ and $c,$ equation
\begin{equation}\label{eq1}
    \frac{\partial z^\epsilon}{\partial t}-\frac{a}{\epsilon}\nabla\cdot\big((1-b\epsilon \gm)g_{a}(|\gu|)\nabla z^\epsilon\big)=\frac{c}{\epsilon}\nabla\cdot\left((1-b\epsilon \gm)g_{c}(|\gu|)\frac{\gu}{|\gu|}\right),
\end{equation}
is a relevant model for the short-term
 dynamics of dunes. In equation (\ref{eq1}), $z^\epsilon=z^\epsilon(x,t)$ where, for a given constant $T,\,\,t\in[0,T),$ stands for the dimensionless time and $x=(x_{1},x_{2})\in\torus^{2},$ $\torus^{2}$ being the two dimensional torus $\,\mathbb{R}^{2}/\,\mathbb{Z}^{2},$ is the dimensionless position variable, is the dimensionless seabed altitude at time $t$ and in position $x.$ Operators $\nabla$ and $\nabla\cdot$ refer to gradient and divergence. Functions $g_{a}$ and $g_{c}$ are regular on $\R^+$ and satisfy
\begin{equation}\label{eq2}\left\{ \begin{array}{ccc}
g_{a}\geq g_{c}\geq0,\,\,g_{c}(0)=g'_{c}(0)=0,\\
\exists d\geq0,\sup_{u\in\mathbb{R}^{+}}|g_{a}(u)|+\sup_{u\in\mathbb{R}^{+}}|g'_{a}(u)|\leq d,\\
    \sup_{u\in\mathbb{R}^{+}}|g_{c}(u)|+\sup_{u\in\mathbb{R}^{+}}|g'_{c}(u)|\leq d,\\
    \exists U_{thr}\geq0,\,\,\exists G_{thr}>0,\,\,\textrm{such that}\,\, u\geq U_{thr}\Longrightarrow g_{a}(u)\geq G_{thr}.\end{array}\right.
\end{equation}

Fields $\gu$ and $\gm$ are dimensionless water velocity and height. They are given by
\begin{equation}\label{eq3}
\gu(t,x)=\mathcal{U}(t,\frac{t}{\epsilon},x)\quad \gm(t,x)=\mathcal{M}(t,\frac{t}{\epsilon},x),
 \end{equation}
where
\begin{equation}\label{eq4}\left\{ \begin{array}{ccc}
        \ds \mathcal{U}=\mathcal{U}(t,\theta,x)\,\,\textrm {and} \,\,\mathcal{M}=\mathcal{M}(t,\theta,x) \,\,\textrm{are regular functions on}\,\, \mathbb{R}\times\mathbb{R}\times\torus^{2},\\
        \ds\theta\longmapsto(\mathcal{U},\mathcal{M})\,\,\textrm{is periodic of period}\,\, 1,\\
        \ds |\mathcal{U}|,\,\,|\frac{\partial \mathcal{U}}{\partial t}|,\,\,|\frac{\partial \mathcal{U}}{\partial \theta}|,\,\,|\nabla \mathcal{U}|,\,\,|\mathcal{M}|,\,\,|\frac{\partial \mathcal{M}}{\partial t}|,\,\,|\frac{\partial \mathcal{M}}{\partial \theta}|,\,\,|\nabla \mathcal{M}| \,\,\textrm{are bounded by}\,\,d,\\
           \ds \forall (t,\theta,x)\in\mathbb{R}^{+}\times\mathbb{R}\times\torus^{2},\,\,|\mathcal{U}(t,\theta,x)|\leq U_{thr} \Longrightarrow
\hspace{3cm }\\ \hspace{3cm} \ds \frac{\partial\mathcal{U}}{\partial t}=0,\,\,\frac{\partial\mathcal{M}}{\partial t}=0,\,\,\nabla\mathcal{M}(t,\theta,x)=0\,\,\textrm{and}\,\,\nabla\mathcal{U}(t,\theta,x)=0,\\
            \ds\exists \theta_{\alpha}<\theta_{\omega}\in[0,1]\,\,\textrm{such that}\,\, \forall\,\,\theta\in [\theta_{\alpha},\theta_{\omega}]\Longrightarrow|\mathcal{U}(t,\theta,x)|\geq U_{thr} .\end{array}\right.
\end{equation}
\begin{remark}
The last two assumptions
 in (\ref{eq4}) are necessary when $g_{a}$ may vanish. In the case where $g_{a}(u)\geq G_{thr}$ for any $u\geq0,$ then $U_{thr}=0$ and the last two assumptions of (\ref{eq4}) are automatically satisfied by any $\mathcal{U}.$
{\ \hfill \rule{0.5em}{0.5em}}
\end{remark}

~

The following equation, for constants $a,$ $b$ and $c$
\begin{equation}\label{eq5}
\frac{\partial z^\epsilon}{\partial t}-\frac{a}{\epsilon}\nabla\cdot\left((1-b\sqrt{\epsilon}\gm)g_{a}(|\gu|)\nabla z^\epsilon\right)=\frac{c}{\epsilon}\nabla\cdot\left((1-b\sqrt{\epsilon}\gm)g_{c}(|\gu|)\frac{\gu}{|\gu|}\right),
\end{equation}
 with condition (\ref{eq2}) on $g_{a}$ and $g_{c}$ and with  $\gu$ and $\gm$ given by
 \begin{equation}\label{eq6}
 \gu(t,x)=\widetilde{\mathcal{U}}(t,\frac{t}{\sqrt{\epsilon}},\frac{t}{\epsilon},x),\quad \gm(t,x)=\mathcal{M}(t,\frac{t}{\sqrt{\epsilon}},\frac{t}{\epsilon},x),
 \end{equation}
 is a relevant model for mean-term dune dynamics.

 For mathematical reasons, we assume
 \begin{equation}\label{eq7}
 \widetilde{\mathcal{U}}(t,\tau,\theta,x)=\mathcal{U}(t,\theta,x)+ \sqrt{\epsilon}\,\mathcal{U}_{1}(t,\tau,\theta,x),
 \end{equation}
where $\mathcal{U}=\mathcal{U}(t,\theta,x)$ and $\mathcal{U}_{1}=\mathcal{U}_{1}(t,\tau,\theta,x)$ are regular. We also assume  that $\mathcal{M}=\mathcal{M}(t,\tau,\theta,x)$ is regular and
\begin{equation}\label{eq8}\left\{\begin{array}{ccc}
\ds\theta\longmapsto(\mathcal{U},\mathcal{U}_{1},\mathcal{M})\,\,\textrm{is periodic of period 1,}\\
\ds\tau\longmapsto(\mathcal{U}_{1},\mathcal{M})\,\,\textrm{is periodic of period 1,}\\
\ds |\mathcal{U}|,\,\,|\frac{\partial \mathcal{U}}{\partial t}|,\,\,|\frac{\partial \mathcal{U}}{\partial \theta}|,\,\,|\nabla \mathcal{U}|,\,\,|\mathcal{U}_{1}|,\,\,|\frac{\partial \mathcal{U}_{1}}{\partial t}|,\,\,|\frac{\partial \mathcal{U}_{1}}{\partial \theta}|,\,\,|\nabla \mathcal{U}_{1}|,
\hspace{3cm}\\ \hspace{3cm}
\ds |\mathcal{M}|,\,\,|\frac{\partial \mathcal{M}}{\partial t}|,\,\,|\frac{\partial \mathcal{M}}{\partial \theta}|,\,\,|\frac{\partial \mathcal{M}}{\partial \tau}|,\,\,|\nabla \mathcal{M}| \,\,\textrm{are bounded by}\,\,d,\\
            \ds\forall (t,\tau,\theta,x)\in\mathbb{R}^{+}\times\mathbb{R}\times\mathbb{R}\times\torus^{2},\,\,|\widetilde{\mathcal{U}}(t,\theta,x)|\leq U_{thr}\Longrightarrow
 \hspace{2.5cm}\\  \hspace{0.5cm}
 \ds\frac{\partial\widetilde{\mathcal{U}}}{\partial t} (t,\theta,x) =0,\,\,\frac{\partial\widetilde{\mathcal{U}}}{\partial \tau} (t,\theta,x) =0,\,\,\nabla\widetilde{\mathcal{U}}(t,\tau,\theta,x)=0,
 \\ \hspace{2.5cm}
 \ds\frac{\partial\mathcal{M}}{\partial t} (t,\theta,x) =0,\,\,\frac{\partial\mathcal{M}}{\partial \tau} (t,\theta,x) =0\textrm{ and}\,\,\nabla\mathcal{M}(t,\theta,x)=0,\\
            \exists \theta_{\alpha}<\theta_{\omega}\in[0,1]\,\,\textrm{such that}\,\, \forall\,\,\theta\in [\theta_{\alpha},\theta_{\omega}]\Longrightarrow|\mathcal{U}(t,\tau,\theta,x)|\geq U_{thr}.
\end{array}\right.\end{equation}

~

A relevant model for long-term
 dune dynamics is the following equation
\begin{equation}\label{eq81}
\frac{\partial z^\epsilon}{\partial t}-\frac{a}{\epsilon^{2}}\nabla\cdot\left((1-b\epsilon \gm)g_{a}(|\gu|)\nabla z^\epsilon\right)=\frac{c}{\epsilon^{2}}\nabla\cdot\left((1-b\epsilon \gm)g_{c}(|\gu|)\frac{\gu}{|\gu|}\right),
\end{equation}
where $a,$ $b$ and $c$ are constants, where $g_{a}$ and $g_{c}$ satisfy assumption (\ref{eq2}), and where $z$ is defined on the same space as before. It is also relevant to assume
\begin{equation}\label{eq9}
\gu(x,t)=\mathcal{U}(\frac{t}{\epsilon},x)+\epsilon^{2}\mathcal{U}_{2}(t,\frac{t}{\epsilon},x),\quad \gm(t,x)=\mathcal{M}(\frac{t}{\epsilon},x)+\epsilon^{2}\mathcal{M}_{2}(t,\frac{t}{\epsilon},x)
\end{equation}
where $\mathcal{U}=\mathcal{U}(\theta,x),\,\,\mathcal{U}_{2}(t,\theta,x),\,\,\mathcal{M}=\mathcal{M}(\theta,x)$ and $\mathcal{M}_{2}=\mathcal{M}_{2}(t,\theta,x)$ are regular and
\begin{equation}\label{eq10}\left\{\begin{array}{ccc}
\theta\longmapsto(\mathcal{U},\mathcal{U}_{2},\mathcal{M},\mathcal{M}_{2})\,\,\textrm{is periodic of period 1},\\
\ds|\mathcal{U}|,\,\,|\frac{\partial \mathcal{U}}{\partial t}|,\,\,|\frac{\partial \mathcal{U}}{\partial \theta}|,\,\,|\nabla \mathcal{U}|,\,\,|\mathcal{U}_{2}|,\,\,|\frac{\partial \mathcal{U}_{2}}{\partial t}|,\,\,|\frac{\partial \mathcal{U}_{2}}{\partial \theta}|,\,\,|\nabla \mathcal{U}_{2}|,\,\,
\ds |\mathcal{M}|,\,\,|\frac{\partial \mathcal{M}}{\partial t}|,\,\,|\frac{\partial \mathcal{M}}{\partial \theta}|,\,\,|\frac{\partial \mathcal{M}}{\partial \tau}|,\,\,|\nabla \mathcal{M}|, \,\,
\\ \hspace{4.3cm}
\ds |\mathcal{M}_{2}|,\,\,|\frac{\partial \mathcal{M}_{2}}{\partial t}|,\,\,|\frac{\partial \mathcal{M}_{2}}{\partial \theta}|,\,\,|\frac{\partial \mathcal{M}_{2}}{\partial \tau}|,\,\,|\nabla \mathcal{M}_{2}|\textrm{ are bounded by}\,\,d,\\
\ds \forall (t,\theta,x)\in\mathbb{R}^{+}\times\mathbb{R}\times\torus^{2},\,\,|\widetilde{\mathcal{U}}(t,\theta,x)+\epsilon^{2}\mathcal{U}_{2}(t,\theta,x)|\leq U_{thr}\Longrightarrow
\hspace{2cm}\\ \hspace{0.5cm}
\ds \frac{\partial{\mathcal{U}_{2}}}{\partial t} (\theta,x) =0,\,\,\nabla{\mathcal{U}}(\theta,x)=0,\,\,\nabla{\mathcal{U}_{2}}(\theta,x)=0,
\\\hspace{2cm}
\ds\frac{\partial\mathcal{M}_{2}}{\partial t}=0,\,\,\nabla\mathcal{M}(\theta,x)=0,\,\,\nabla\mathcal{M}_{2}(t,\theta,x)=0,\\
\ds\exists \theta_{\alpha}<\theta_{\omega}\in[0,1]\,\,\textrm{such that}\,\, \forall\,\,\theta\in\mathbb{R},\,\,\theta\in [\theta_{\alpha},\theta_{\omega}]\Longrightarrow|\mathcal{U}(\theta,x)+\epsilon^{2}\mathcal{U}_{2}(t,\theta,x)|\geq U_{thr}.
\end{array}\right.\end{equation}

~

Equations (\ref{eq1}), (\ref{eq5}) or (\ref{eq81}) need to be provided with an initial condition
\begin{equation}\label{eq11}
z^{\epsilon}_{|t=0}=z_{0},
\end{equation}
giving the shape of the seabed at the initial time.

~

We will now state the main results of the paper.
The first concerns existence and uniqueness for the short and mean-term models.

\begin{theorem}\label{th1}
For any $T>0,$ any $a>0,$ any real constants $b$ and $c$ and any $\epsilon>0,$ under assumptions (\ref{eq2}) and (\ref{eq3}),(\ref{eq4}) or (\ref{eq7}),(\ref{eq8}), if
\begin{equation}\label{eq12}
z_{0}\,\,\in L^{2}(\torus^{2}),
\end{equation}
there exists a unique function $z^{\epsilon}\in L^{\infty}([0,T),L^{2}(\torus^{2})),$ solution to equation (\ref{eq1}) or (\ref{eq5}) provided with initial condition (\ref{eq11}).\\
Moreover, for any $t\in[0,T],\,\, z^{\epsilon}$ satisfy
\begin{equation}\label{eq12.111}
\|z^{\epsilon}\|_{  L^{\infty}([0,T),L^{2}(\torus^{2}))}\leq\widetilde{\gamma},
\end{equation}
for a constant $\widetilde{\gamma}$ not depending on $\epsilon$ and
\begin{equation}\label{eq12.1}
\frac{\ds d\left(\int_{\torus^{2}} z^{\epsilon}(t,x) \, dx\right)}{dt} =0.
\end{equation}
\end{theorem}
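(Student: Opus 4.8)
The plan is to prove existence by a Galerkin method, uniqueness by an energy estimate on the difference of two solutions, and then deduce the a priori bound \eqref{eq12.111} and the conservation property \eqref{eq12.1} from the weak formulation. First I would set up the variational framework: test equation \eqref{eq1} (resp. \eqref{eq5}) against $z^\epsilon$ itself and integrate over $\torus^2$. Since $\gu$ and $\gm$ are smooth and bounded by $d$, and $\epsilon$ is fixed, the coefficient $(1-b\epsilon\gm)g_a(|\gu|)$ is bounded, and moreover by the last two assumptions of \eqref{eq4} (resp. \eqref{eq8}) the ``bad'' set where $g_a$ may vanish is exactly the set where $\nabla\gu=0$, $\partial_t\gu=0$ etc.; combined with the threshold assumption there is $\theta\in[\theta_\alpha,\theta_\omega]$ on which $|\gu|\geq U_{thr}$, so $g_a\geq G_{thr}$ there. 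This gives a degenerate-but-time-integrable coercivity: $\int_0^T\!\!\int_{\torus^2}(1-b\epsilon\gm)g_a(|\gu|)|\nabla z^\epsilon|^2\,dx\,dt$ controls $\int_{\theta_\alpha/\epsilon}^{\theta_\omega/\epsilon}\!\!\int_{\torus^2}|\nabla z^\epsilon|^2$. The right-hand side is handled by Cauchy--Schwarz and Young: $\frac{c}{\epsilon}\nabla\cdot((1-b\epsilon\gm)g_c(|\gu|)\frac{\gu}{|\gu|})$ integrated against $z^\epsilon$ becomes, after integration by parts, $-\frac{c}{\epsilon}\int (1-b\epsilon\gm)g_c(|\gu|)\frac{\gu}{|\gu|}\cdot\nabla z^\epsilon$, which is absorbed using the $\nabla z^\epsilon$ term where $g_a$ is nondegenerate and is harmless where $g_c(|\gu|)=0$ (note $g_c(0)=0$, and where $|\gu|<U_{thr}$ the gradient $\nabla z^\epsilon$ need not be controlled but $g_c\le g_a$, so the problematic region is where $|\gu|\ge U_{thr}$, on which $g_a\geq G_{thr}$).

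\textbf{Galerkin construction.} I would take the orthonormal basis of $L^2(\torus^2)$ given by the Fourier modes $e^{2\pi i k\cdot x}$, $k\in\Z^2$, project \eqref{eq1} onto the span of the first $N$ modes to get a linear ODE system $\dot{c}^N(t)=A^N(t)c^N(t)+f^N(t)$ with continuous (indeed smooth) coefficients in $t$, solvable on $[0,T)$ by Cauchy--Lipschitz. The energy estimate above, applied to the Galerkin approximations $z^\epsilon_N$, yields a bound on $\|z^\epsilon_N\|_{L^\infty([0,T),L^2)}$ uniform in $N$ (and, crucially, uniform in $\epsilon$ because the $\frac{1}{\epsilon}$ factors are compensated: the dissipation term carries $\frac{a}{\epsilon}$ and the source term carries $\frac{c}{\epsilon}$, and Young's inequality balances them so the surviving constant depends only on $a,b,c,d,G_{thr},T,\|z_0\|_{L^2}$). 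Passing to a weakly-$*$ convergent subsequence $z^\epsilon_N\rightharpoonup z^\epsilon$ in $L^\infty([0,T),L^2(\torus^2))$ and using linearity of the equation (so no compactness in the nonlinearity is needed — the coefficients are fixed given functions of $(t,x)$) lets me pass to the limit in the weak formulation and obtain a solution; the bound \eqref{eq12.111} is inherited by weak-$*$ lower semicontinuity.

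\textbf{Uniqueness and conservation.} For uniqueness, let $w=z^\epsilon_1-z^\epsilon_2$ solve the homogeneous equation with $w_{|t=0}=0$; testing against $w$ gives $\frac12\frac{d}{dt}\|w\|_{L^2}^2+\frac{a}{\epsilon}\int(1-b\epsilon\gm)g_a(|\gu|)|\nabla w|^2=0$, and since for $\epsilon$ small enough $1-b\epsilon\gm\geq \frac12$ (or one works with $|1-b\epsilon\gm|\le 1+|b|d\,\epsilon$ and absorbs) the dissipation term is nonnegative, so $\|w(t)\|_{L^2}^2\le 0$, hence $w\equiv0$. For \eqref{eq12.1}, integrate \eqref{eq1} over $\torus^2$: the divergence terms integrate to zero on the torus (no boundary), leaving $\frac{d}{dt}\int_{\torus^2}z^\epsilon\,dx=0$; this is rigorous at the Galerkin level (the $k=0$ Fourier coefficient satisfies $\dot c_0^N=0$) and passes to the limit. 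The \textbf{main obstacle} I anticipate is making the coercivity/absorption argument fully rigorous despite the degeneracy of $g_a$: one cannot control $\nabla z^\epsilon$ on the whole of $[0,T)\times\torus^2$, only on the time-slices where $|\gu|\ge U_{thr}$, so the energy estimate must be organized to use the dissipation only where it is available and to rely on the structural assumptions in \eqref{eq4}/\eqref{eq8} (vanishing of $\nabla\gu$, $\partial_t\gu$, $\nabla\gm$ on the degenerate set) to ensure the source term and any commutator-type terms also vanish there; getting the $\epsilon$-uniformity of $\widetilde\gamma$ out of this requires carefully tracking that every $\frac1\epsilon$ is paired against a factor produced by the dissipation term before applying Young's inequality.
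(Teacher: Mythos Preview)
Your proposal has a genuine gap in the crucial step, namely the $\epsilon$-independence of the bound $\widetilde\gamma$. When you test against $z^\epsilon$ and apply Young's inequality with the weight $(1-b\epsilon\gm)g_a(|\gu|)$, you get
\[
\frac12\frac{d}{dt}\|z^\epsilon\|_2^2 + \frac{a}{2\epsilon}\int_{\torus^2}(1-b\epsilon\gm)g_a(|\gu|)|\nabla z^\epsilon|^2\,dx
\;\le\; \frac{c^2}{2a\epsilon}\int_{\torus^2}\frac{(1-b\epsilon\gm)\,g_c(|\gu|)^2}{g_a(|\gu|)}\,dx.
\]
The integrand on the right is bounded (since $g_c\le g_a$ and both are bounded), but the prefactor $1/\epsilon$ does \emph{not} cancel: it survives as a constant of order $1/\epsilon$ on the right-hand side. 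Integrating in time gives only $\|z^\epsilon(t)\|_2^2\le \|z_0\|_2^2 + Ct/\epsilon$, which blows up as $\epsilon\to0$. Your sentence ``Young's inequality balances them so the surviving constant depends only on $a,b,c,d,G_{thr},T,\|z_0\|_{L^2}$'' is precisely the step that fails; the $1/\epsilon$ in front of the source term is matched by the $1/\epsilon$ in front of the dissipation only for the \emph{gradient} part, not for the residual $L^2$ source after absorption. This is exactly why the paper's own Corollary~\ref{corEstLoc} records only the $\epsilon$-dependent bound $\|z^\epsilon\|\le \gamma_1/\epsilon$ from this argument.

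The paper obtains the $\epsilon$-uniform bound by a completely different mechanism: it constructs a $\theta$-periodic profile $\mathcal{S}(t,\tau,\theta,x)$ solving the cell equation $\partial_\theta\mathcal{S}-\nabla\cdot(\widetilde{\mathcal{A}}_\epsilon\nabla\mathcal{S})=\nabla\cdot\widetilde{\mathcal{C}}_\epsilon$ (Theorems~\ref{th3.5}, \ref{th3.12}, \ref{th3.17}), with $\epsilon$-independent bounds on $\mathcal{S}$, $\partial_t\mathcal{S}$ and $\partial_\tau\mathcal{S}$. Setting $\mathcal{Z}^\epsilon(t,x)=\mathcal{S}(t,t/\sqrt\epsilon,t/\epsilon,x)$, the difference $z^\epsilon-\mathcal{Z}^\epsilon$ then satisfies an equation in which the $1/\epsilon$ forcing has \emph{disappeared} (it is absorbed by $\frac1\epsilon\partial_\theta\mathcal{S}$), leaving only $\partial_t\mathcal{S}+\frac1{\sqrt\epsilon}\partial_\tau\mathcal{S}=O(1)$ on the right; a standard energy estimate on this difference gives the $\epsilon$-uniform bound (Theorem~\ref{th3.18}). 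Without this periodic-corrector construction (or an equivalent device), a direct Galerkin-plus-energy argument cannot produce \eqref{eq12.111}. Your uniqueness and conservation arguments are fine and match the paper's.
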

The proof of this theorem is done in section \ref{secExEs}, except equality (\ref{eq12.1})  which is directly gotten by integrating (\ref{eq1}) or (\ref{eq5}) with respect to $x$ over $\torus^2$.
\\
In the previous theorem, $L^{2}(\torus^{2})$ stands for the usual space of square integrable functions defined on the torus $\torus^{2}$ and $L^{\infty}([0,T),L^{2}(\torus^{2}))$ stands for the space of functions mapping $[0,T]$ to $L^{2}(\torus^{2})$ and which are bounded. $\|.\|_{L^{\infty}([0,T),L^{2}(\torus^{2}))}$ stands for the usual norm on this space.
\begin{remark}
As equations (\ref{eq1}) and (\ref{eq5}) are linear, almost parabolic equations, the proof of the existence of $z^{\epsilon}$ over a time interval depending on $\epsilon$ is a straight forward consequence of adaptations of results from Ladyzenskaja, Solonnikov and Ural'Ceva \cite{LSU} or Lions \cite{J.L.L}. But here, since we want to follow an asymptotic process consisting in making $\epsilon\rightarrow0$, we need a time interval which does not depend on $\epsilon.$ Because of the presence of $\frac{1}{\epsilon}$ factor and the fact that the diffusion term may cancel, the proof of theorem \ref{th1} needs
 several steps. In a first step, we prove the existence of a solution, periodic in time and space of a parabolic equation. From this first existence result, we deduce existence of a solution,
 periodic in time and space of an ad-doc degenerate parabolic equation.
\\ Those two results are interesting by themselves and complete the theorem collection in the topic of time and space time-periodic solution to parabolic equation. Concerning this topic, we refer for instance to Barles and Souganidis \cite{Barl}, Berestycki, Hamel and Roques \cite{HBFHLR2,HBFHLR}, Bostan \cite{Bostan}, Hansbo \cite{Hansbo}, Kono \cite{Kono}, Nadin \cite{NadinCras,Nadin}, Namah and Roquejoffre \cite{NaRoqj} and Pardoux \cite{Pard}.\\
Then, having on hand the existence
of the space-time periodic solution to the ad-doc degenerate parabolic equation, we can deduce that the solution $z^{\epsilon}$ which exists on $\epsilon$-dependant time interval, remains close to it. This allows us to deduce a large time existence.
{\ \hfill \rule{0.5em}{0.5em}}
\end{remark}
\begin{remark}
Moreover, notice that
 theorem \ref{th1}, theorems \ref{th3.18} and \ref{th3.19} also complete the theorem collection concerning the topic of large time behavior of parabolic equation (see Barles and Souganidis\cite{Barl}, Da Lio \cite{Dalio}, Norris \cite{Norris}, Park and Tanabe \cite{ParkTan}, Pardoux \cite{Pard}, Petita \cite{Petita} and Tanabe \cite{Tanabe}.
{\ \hfill \rule{0.5em}{0.5em}\\}
\end{remark}
We  now give a result concerning the asymptotic behavior as $\epsilon\longrightarrow0$ of 
the short-term model.
\begin{theorem}\label{thAsyBeh}
For any $T>0,$ under the same assumptions as in theorem \ref{th1}, the solution $z^\epsilon$ to
equation (\ref{eq1}) given by theorem \ref{th1} two-scale converges to a profile
$U\in L^{\infty}([0,T],L^{\infty}_\#(\R,L^2(\torus^{2})))$
 which is the unique solution to
\begin{equation}\label{eqHomIntro}
\frac{\partial U}{\partial\theta}
-\nabla\cdot(\widetilde{\mathcal{A}}\nabla U)=\nabla \cdot\widetilde{\mathcal{C}},
\end{equation}
where $\widetilde{\mathcal{A}}$ and $\widetilde{\mathcal{C}}$ are given by
\begin{equation}\label{eqHomIntroBis}
\widetilde{\mathcal{A}}=a\,g_a(|\mathcal{U}(t,\theta,x)|) \,\, \textrm{and}\,\,\widetilde{\mathcal{C}}=c\,g_c(|\mathcal{U}(t,\theta,x)|)\,\frac{\mathcal{U}(t,\theta,x)}{|\mathcal{U}(t,\theta,x)|}.
\end{equation}
\end{theorem}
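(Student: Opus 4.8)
The plan is to use Nguetseng–Allaire two-scale convergence with the fast time variable $\theta = t/\epsilon$. First I would recall that, by the uniform bound \eqref{eq12.111} from Theorem \ref{th1}, the family $(z^\epsilon)$ is bounded in $L^\infty([0,T),L^2(\torus^2))$, hence in $L^2([0,T)\times\torus^2)$; therefore, up to a subsequence, it two-scale converges to some $U=U(t,\theta,x)\in L^2([0,T)\times\torus^2\times\torus_\#)$, where $\torus_\#$ denotes the unit cell in the fast variable $\theta$ with period $1$ (as dictated by the periodicity assumption in \eqref{eq4}). To identify the limit equation I would introduce oscillating test functions of the form $\psi^\epsilon(t,x)=\psi(t,t/\epsilon,x)$ with $\psi$ smooth, compactly supported in $t$, $\torus^2$-periodic in $x$ and $1$-periodic in $\theta$, multiply \eqref{eq1} by $\epsilon\,\psi^\epsilon$, and integrate over $[0,T)\times\torus^2$. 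The crucial scaling observation is that the $1/\epsilon$ in front of the elliptic and source terms is exactly compensated by the extra $\epsilon$, so after integration by parts in $t$ one gets, at leading order, the balance between $\int z^\epsilon \partial_\theta\psi^\epsilon$ (coming from $\partial_t$ hitting the fast scale) and the diffusion/source terms, while the term $\int z^\epsilon \partial_t\psi^\epsilon$ carries a spare $\epsilon$ and drops out in the limit.

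Next I would pass to the two-scale limit in each term. For the elliptic term $\tfrac{a}{\epsilon}\nabla\cdot\big((1-b\epsilon\gm)g_a(|\gu|)\nabla z^\epsilon\big)$, after multiplying by $\epsilon\psi^\epsilon$ and integrating by parts in $x$, one is left with $-a\int (1-b\epsilon\gm)g_a(|\gu|)\nabla z^\epsilon\cdot\nabla_x\psi^\epsilon$. Using \eqref{eq3}–\eqref{eq4}, the coefficient $(1-b\epsilon\gm(t,x))g_a(|\gu(t,x)|) = (1-b\epsilon\mathcal{M}(t,t/\epsilon,x))g_a(|\mathcal{U}(t,t/\epsilon,x)|)$ is an admissible (strongly two-scale convergent) oscillating coefficient converging to $g_a(|\mathcal{U}(t,\theta,x)|)$, since the $b\epsilon\mathcal{M}$ perturbation vanishes uniformly and $g_a$ is Lipschitz. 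Combined with the weak two-scale convergence of $\nabla z^\epsilon$, whose two-scale limit is necessarily $\nabla_x U + \nabla_\theta$-type correctors — here, because there is only a fast \emph{time} variable and no fast space variable, the limit of $\nabla z^\epsilon$ is simply $\nabla_x U$ — one obtains in the limit $-a\int g_a(|\mathcal{U}|)\,\nabla_x U\cdot\nabla_x\psi$. A symmetric computation for the source term $\tfrac{c}{\epsilon}\nabla\cdot\big((1-b\epsilon\gm)g_c(|\gu|)\tfrac{\gu}{|\gu|}\big)$ yields $-c\int g_c(|\mathcal{U}|)\tfrac{\mathcal{U}}{|\mathcal{U}|}\cdot\nabla_x\psi$; note the ratio $\gu/|\gu|$ is only a concern where $\mathcal{U}$ vanishes, but there $g_c(|\mathcal{U}|)=0$ and $g_c'(0)=0$ by \eqref{eq2}, so $g_c(|\mathcal{U}|)\tfrac{\mathcal{U}}{|\mathcal{U}|}$ extends continuously by $0$ and the product is a legitimate bounded oscillating coefficient. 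Assembling all three limits gives, for every admissible test function $\psi$,
\begin{equation}\label{eqWeakHom}
-\int_{[0,T)}\!\int_{\torus^2}\!\int_{\torus_\#} U\,\frac{\partial\psi}{\partial\theta}
-\int_{[0,T)}\!\int_{\torus^2}\!\int_{\torus_\#} a\,g_a(|\mathcal{U}|)\,\nabla_x U\cdot\nabla_x\psi
=\int_{[0,T)}\!\int_{\torus^2}\!\int_{\torus_\#} c\,g_c(|\mathcal{U}|)\,\frac{\mathcal{U}}{|\mathcal{U}|}\cdot\nabla_x\psi,
\end{equation}
which is exactly the weak form of \eqref{eqHomIntro}–\eqref{eqHomIntroBis} with $\widetilde{\mathcal{A}}=a\,g_a(|\mathcal{U}|)$ and $\widetilde{\mathcal{C}}=c\,g_c(|\mathcal{U}|)\tfrac{\mathcal{U}}{|\mathcal{U}|}$. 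One then upgrades the regularity of $U$ in $\theta$: \eqref{eqWeakHom} shows $\partial_\theta U$ equals a divergence in $x$ of an $L^2$ field, so $U\in L^\infty([0,T],L^\infty_\#(\R,L^2(\torus^2)))$ as claimed, using the a priori bound $\widetilde\gamma$ which is stable under two-scale limits.

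Finally I would prove uniqueness for \eqref{eqHomIntro}, which by linearity amounts to showing that the only solution with zero right-hand side is zero. The natural energy estimate is obtained by testing with $U$ itself (after the usual regularization/Steklov-averaging in $\theta$ to justify it): since $\theta$ ranges over the torus $\torus_\#$, $\int_{\torus_\#}\partial_\theta U\cdot U\,d\theta=\tfrac12\partial_?\!\int|U|^2=0$... — more precisely, integrating $\partial_\theta U - \nabla_x\cdot(\widetilde{\mathcal{A}}\nabla_x U)=0$ against $U$ over $\torus_\#\times\torus^2$ makes the $\partial_\theta$ term vanish by periodicity and leaves $\int\widetilde{\mathcal{A}}|\nabla_x U|^2=0$, whence $\nabla_x U=0$ wherever $\widetilde{\mathcal{A}}>0$. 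The main obstacle — and the place where assumption \eqref{eq4} does its real work — is that $\widetilde{\mathcal{A}}=a\,g_a(|\mathcal{U}|)$ may \emph{degenerate} to zero on the set $\{|\mathcal{U}|<U_{thr}\}$, so $\nabla_x U=0$ is only deduced on its complement. To conclude $U$ is $x$-independent everywhere one uses the last assumption of \eqref{eq4}: on the non-degenerate time window $\theta\in[\theta_\alpha,\theta_\omega]$ we have $\widetilde{\mathcal{A}}\ge a\,G_{thr}>0$, hence $\nabla_x U(t,\theta,\cdot)=0$ there; and on the degenerate set, the hypothesis forces $\partial_t\mathcal{U}=\partial_t\mathcal{M}=\nabla_x\mathcal{U}=\nabla_x\mathcal{M}=0$, so there the equation $\partial_\theta U=0$ propagates the value of $U$ from the boundary of the window, giving that $U$ depends on $x$ only through its value on $[\theta_\alpha,\theta_\omega]$, i.e. not at all. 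So $U=U(t,\theta)$; plugging back, $\partial_\theta U = \nabla_x\cdot\widetilde{\mathcal{C}}=0$ (since $\nabla_x\mathcal{U}=0$ on the degenerate set and $g_c(|\mathcal{U}|)\tfrac{\mathcal{U}}{|\mathcal{U}|}$ is locally constant in $x$ elsewhere by the same token — more carefully, one integrates $\nabla_x\cdot\widetilde{\mathcal C}$ against a $\theta$-only test function, which vanishes), and periodicity in $\theta$ forces $U$ constant in $\theta$; the constant is fixed to zero by the fact that the $x$-average is conserved and initially zero in the homogeneous problem. Uniqueness then also shows the whole family $z^\epsilon$ (not just a subsequence) two-scale converges, completing the proof.
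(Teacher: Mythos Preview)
Your overall strategy---two-scale convergence in the fast time variable $\theta=t/\epsilon$, oscillating test functions $\psi^\epsilon(t,x)=\psi(t,t/\epsilon,x)$, multiplication by $\epsilon$---matches the paper exactly. However, there is a genuine gap in your passage to the limit in the diffusion term. After one integration by parts you are left with
\[
-a\int (1-b\epsilon\gm)\,g_a(|\gu|)\,\nabla z^\epsilon\cdot\nabla\psi^\epsilon\,dt\,dx,
\]
and you then invoke ``the weak two-scale convergence of $\nabla z^\epsilon$''. But Theorem~\ref{th1} only gives $\|z^\epsilon\|_{L^\infty([0,T),L^2(\torus^2))}\le\widetilde\gamma$; there is no uniform-in-$\epsilon$ bound on $\nabla z^\epsilon$ in $L^2$, and none can be expected since the equation is degenerate (at best the energy identity yields a bound on $\sqrt{\mathcal A^\epsilon}\,\nabla z^\epsilon$, which is useless where $\mathcal A^\epsilon$ vanishes). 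Without such a bound, $\nabla z^\epsilon$ need not two-scale converge at all, and your limit identification fails.

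The paper avoids this by integrating by parts in $x$ a \emph{second} time, transferring the full second-order operator onto the test function: one writes
\[
\int z^\epsilon\,\nabla\cdot\big(\mathcal A^\epsilon\nabla\psi^\epsilon\big)\,dt\,dx,
\]
and then observes that, since $\mathcal A^\epsilon$ is smooth and bounded (properties (\ref{3.12})), the function $\nabla\cdot(\mathcal A^\epsilon\nabla\psi^\epsilon)$ is itself an admissible oscillating test function against which the mere $L^2$ two-scale convergence of $z^\epsilon$ suffices. This is the key technical device you are missing.

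Your uniqueness argument is also shakier than it needs to be. The step where you claim that on the degeneracy set ``the equation $\partial_\theta U=0$ propagates the value of $U$'' is not justified: knowing $\widetilde{\mathcal A}\,|\nabla U|^2=0$ does not by itself give $\nabla\cdot(\widetilde{\mathcal A}\nabla U)=0$ pointwise. The paper instead argues directly from the energy identity: periodicity in $\theta$ forces $\widetilde{\mathcal A}\,|\nabla(\mathcal S_1-\mathcal S_2)|^2\equiv 0$, hence $\nabla(\mathcal S_1-\mathcal S_2)(\theta_\omega,\cdot)=0$ (since $\widetilde{\mathcal A}\ge\widetilde G_{thr}$ there), hence $(\mathcal S_1-\mathcal S_2)(\theta_\omega,\cdot)=0$ by the zero-mean constraint; then the energy identity reduces to $\tfrac{d}{d\theta}\|\mathcal S_1-\mathcal S_2\|_2^2=0$, and the conclusion follows. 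Alternatively, one simply invokes Theorem~\ref{th3.17}, which already supplies both existence and uniqueness (with the $L^\infty_\#(\R,L^2)$ regularity) for the limit equation.
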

In this theorem, $L^{\infty}_\#(\R,L^2(\torus^{2}))$ stands for the space of functions depending on $\theta$
and $x$ mapping $\R$ to $L^{2}(\torus^{2})$ and which are periodic of period 1 with respect to $\theta$
and $L^{\infty}([0,T],L^{\infty}_\#(\R,L^2(\torus^{2})))$ stands for the space of functions mapping $[0,T]$ to $L^{\infty}_\#(\R,L^2(\torus^{2}))$ and which are bounded.
For the definition and results about two-scale convergence we refer to Nguetseng \cite{nguetseng:1989}, Allaire \cite{allaire:1992} and Fr\'enod Raviart and Sonnendr\"{u}cker \cite{FRS:1999}.
{\ \hfill \rule{0.5em}{0.5em}\\}
~\\
Finally, we give a corrector result for the short-term model under restrictive assumptions.
\begin{theorem}\label{th3}
Under the same assumptions as in theorem \ref{th1} and if moreover $U_{thr}=0,$ considering 
function $z^{\epsilon}\in L^{\infty}([0,T),L^{2}(\torus^{2})),$ 
solution to (\ref{eq1}) with initial condition (\ref{eq11}) and function $U^{\epsilon}\in L^{\infty}([0,T],L^{\infty}_\#(\R,L^2(\torus^{2})))$ defined by 
\begin{equation}
U^{\epsilon}(t,x)=U(t,\frac{t}{\epsilon},x),
\end{equation} 
where $U$ is the solution to (\ref{eqHomIntro}), the following estimate is satisfied:
\begin{equation}
\Big\|\frac{z^{\epsilon}-U^{\epsilon}}{\epsilon}\Big\|_{  L^{\infty}([0,T),L^{2}(\torus^{2}))}\,\,\leq\alpha,
\end{equation}
where $\alpha$ is a constant not depending on $\epsilon.$\\
Furthermore 
\begin{equation}
\frac{z^{\epsilon}-U^{\epsilon}}{\epsilon}\quad\textrm{ two-scale converges to a profile}\,\,U^{1}\in L^{\infty}([0,T],L^{\infty}_\#(\R,L^2(\torus^{2}))),
\end{equation} 
which is the unique solution to
\begin{equation}\label{eq5.5}
\frac{\partial U^{1}}{\partial \theta}-\nabla\cdot\left(\widetilde{\mathcal{A}}\nabla U^{1}\right)=\nabla\cdot\widetilde{\mathcal{C}}_{1}+\frac{\partial U}{\partial t}+\nabla\cdot(\widetilde{\mathcal{A}}_{1}\nabla U),
\end{equation}
where $\widetilde{\mathcal{A}}$ and $\widetilde{\mathcal{C}}$ are given by (\ref{eqHomIntroBis}) and where $\widetilde{\mathcal{A}}_{1}$ and $\widetilde{\mathcal{C}}_{1}$ are given by
\begin{equation}\widetilde{\mathcal{A}}_{1}(t,\theta,x)=-ab\mathcal{M}(t,\theta,x)\,g_a(|\mathcal{U}(t,\theta,x)|),\,\, \widetilde{\mathcal{C}}_{1}(t,\theta,x)=-cb\mathcal{M}(t,\theta,x)\,g_c(|\mathcal{U}(t,\theta,x)|)\,\frac{\mathcal{U}(t,\theta,x)}{|U(t,\theta,x)|}.\end{equation}
\end{theorem}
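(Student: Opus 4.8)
The plan is to follow the classical corrector program for two-scale convergence, adapted to the degenerate, singularly perturbed structure of (\ref{eq1}). Since we now assume $U_{thr}=0$, the diffusion coefficient $\widetilde{\mathcal{A}}=a\,g_a(|\mathcal{U}|)$ is bounded below by $aG_{thr}>0$, so equation (\ref{eq5.5}) is a genuinely (uniformly) parabolic equation for $U^1$ in the $\theta$ variable on the torus, and its well-posedness in $L^\infty([0,T],L^\infty_\#(\R,L^2(\torus^2)))$ follows from the space-time periodic existence and uniqueness theory already established in section \ref{secExEs} (the same machinery that gives Theorem \ref{th1}), with $t$ now a harmless parameter. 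So the substantive part is the convergence and the corrector estimate.

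First I would write $z^\epsilon = U^\epsilon + \epsilon\, r^\epsilon$, where $U^\epsilon(t,x)=U(t,t/\epsilon,x)$ and $U$ solves (\ref{eqHomIntro}), and derive the equation satisfied by $r^\epsilon$. Plugging $U^\epsilon$ into the left side of (\ref{eq1}) and using $\partial_t U^\epsilon = (\partial_t U)(t,t/\epsilon,x) + \tfrac1\epsilon(\partial_\theta U)(t,t/\epsilon,x)$, the $\tfrac1\epsilon$ terms cancel exactly the diffusion/source terms built from the leading-order coefficients $\widetilde{\mathcal{A}},\widetilde{\mathcal{C}}$ evaluated at $\theta=t/\epsilon$, precisely because $U$ solves (\ref{eqHomIntro}). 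What remains at order $\epsilon^0$ is a forcing term for $r^\epsilon$ of the form $\partial_t U + \nabla\cdot(\widetilde{\mathcal{A}}_1\nabla U) + \nabla\cdot\widetilde{\mathcal{C}}_1$ (evaluated at $t/\epsilon$), coming from $\partial_t U^\epsilon$ and from expanding the $(1-b\epsilon\gm)$ factors and $g_a(|\gu|)$, $g_c(|\gu|)$ using (\ref{eq4}) and a Taylor expansion; higher-order terms are $O(\epsilon)$ in the appropriate norm. Thus $r^\epsilon$ satisfies an equation of the same type as (\ref{eq1}) but with a bounded right-hand side (uniformly in $\epsilon$) and with initial datum $r^\epsilon_{|t=0} = (z_0 - U(0,0,\cdot))/\epsilon$ — here I would need the normalization/compatibility built into the definition of $U$ at $t=0$ (or absorb the initial layer), which is the one delicate bookkeeping point. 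Applying the a priori bound (\ref{eq12.111}) from Theorem \ref{th1} to the equation for $r^\epsilon$ (its proof only used boundedness of the data and the structure of the operator, not the specific source) yields $\|r^\epsilon\|_{L^\infty([0,T),L^2(\torus^2))}\le\alpha$ uniformly in $\epsilon$, which is exactly the claimed corrector estimate $\|(z^\epsilon-U^\epsilon)/\epsilon\|\le\alpha$.

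Next, the uniform bound on $r^\epsilon=(z^\epsilon-U^\epsilon)/\epsilon$ in $L^\infty([0,T),L^2(\torus^2))$ gives, by the standard compactness theorem for two-scale convergence (Nguetseng \cite{nguetseng:1989}, Allaire \cite{allaire:1992}), a subsequence two-scale converging to some $U^1\in L^\infty([0,T],L^\infty_\#(\R,L^2(\torus^2)))$. To identify the limit equation I would test the equation for $r^\epsilon$ against oscillating test functions $\psi(t,x)\phi(t/\epsilon)$ with $\phi$ $1$-periodic, pass to the two-scale limit term by term (the source terms converge because $\widetilde{\mathcal{A}}_1,\widetilde{\mathcal{C}}_1,\partial_t U$ are explicit smooth functions of $(t,\theta,x)$ evaluated at $\theta=t/\epsilon$, so they two-scale converge strongly to themselves; the diffusion term is handled exactly as in the proof of Theorem \ref{thAsyBeh}, using that $\widetilde{\mathcal{A}}$ is the limit coefficient and $U^1$ the weak-$*$ two-scale limit of $r^\epsilon$), and obtain (\ref{eq5.5}) in the weak sense. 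Uniqueness of the solution to (\ref{eq5.5}) — guaranteed by the uniform parabolicity coming from $U_{thr}=0$ and the periodic-in-$\theta$ well-posedness theory — then forces the whole family $r^\epsilon$ (not just a subsequence) to two-scale converge to $U^1$, completing the proof.

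I expect the main obstacle to be the rigorous justification that the residual produced by inserting $U^\epsilon$ into (\ref{eq1}) is genuinely $O(\epsilon)$ in $L^\infty([0,T),L^2)$ rather than merely formally so: this requires enough regularity of $U$ in all its variables (in particular $\nabla_x U$, $\partial_\theta\nabla_x U$, $\partial_t U$ controlled in $L^2(\torus^2)$ uniformly in $t,\theta$) to make the Taylor expansions of $g_a(|\gu|)$, $g_c(|\gu|)$ and of the $(1-b\epsilon\gm)$ factors legitimate, and careful handling near the set where $|\mathcal{U}|=0$ (where $g_c$ and $g_c'$ vanish by (\ref{eq2}), which is exactly what keeps $\widetilde{\mathcal{C}}$ and $\widetilde{\mathcal{C}}_1$ Lipschitz). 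The regularity of $U$ itself would have to be bootstrapped from equation (\ref{eqHomIntro}) using that it is uniformly parabolic when $U_{thr}=0$ — this is where the extra hypothesis of Theorem \ref{th3} is essential and cannot be dispensed with.
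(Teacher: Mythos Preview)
Your strategy is essentially the paper's: derive the equation for $r^\epsilon=(z^\epsilon-U^\epsilon)/\epsilon$, observe it has the same singularly perturbed structure as (\ref{eq1}) with an $\epsilon$-independent right-hand side built from $\partial_t U$, $\nabla\cdot(\widetilde{\mathcal{A}}_1\nabla U)$ and $\nabla\cdot\widetilde{\mathcal{C}}_1$, invoke the machinery of Theorem \ref{th1} for the uniform bound, then pass to the two-scale limit exactly as in Theorem \ref{thAsyBeh} and use uniqueness of (\ref{eq5.5}) to upgrade from subsequences. One simplification you are missing: for the short-term model (\ref{eq1}) with (\ref{eq3}), the velocity $\gu=\mathcal{U}(t,t/\epsilon,x)$ carries no explicit $\epsilon$-dependence, so $(1-b\epsilon\gm)g_a(|\gu|)=\widetilde{\mathcal{A}}^\epsilon+\epsilon\widetilde{\mathcal{A}}_1^\epsilon$ and likewise for $\widetilde{\mathcal{C}}$ are \emph{exact} decompositions---no Taylor expansion of $g_a,g_c$ is needed and there are no $O(\epsilon)$ remainder terms to control. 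This removes your ``main obstacle'' about justifying the residual: the residual is identically zero, and the only regularity you actually need is boundedness of $\partial_t U$ and $\nabla U$, which the paper obtains (as you anticipate) by differentiating (\ref{eqHomIntro}) in $t$ and invoking uniform parabolicity from $U_{thr}=0$. Your caution about the initial datum $r^\epsilon_{|t=0}=(z_0-U(0,0,\cdot))/\epsilon$ is well placed: the paper's proof simply writes ``using the same arguments as in the proof of Theorem \ref{th1}'' without addressing this, so you are not missing anything the paper supplies---but be aware that the argument of Theorem \ref{th3.18}, applied verbatim, would produce a bound involving $\|(z_0-U(0,0,\cdot))/\epsilon - U^1(0,0,\cdot)\|_2$, which is uniform in $\epsilon$ only under the compatibility $z_0=U(0,0,\cdot)$.
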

\begin{remark}
Theorems \ref{thAsyBeh} and \ref{th3} state a rigorous version of asymptotic expansion of   
$z^{\epsilon}$:
\begin{gather}
z^{\epsilon}(t,x) = U(t,\frac t\epsilon,x) + \epsilon U_1(t,\frac t\epsilon,x) + \dots \;.
\end{gather}
{\ \hfill \rule{0.5em}{0.5em}}
\end{remark}
{\bf Acknowledgments -} This work is supported by FIRST (Fonds d'Impulsion de la Recherche Scientifique et Technique) du Minist\`ere des Biocarburants des Energies Renouvelables et de  la Recherche Scientifique du S\'en\'egal.

The authors thank Joanna Ropers for proofreading the manuscript.
\section{Modeling}\label{secMod}
\subsection{Sand transport equation}
The equation modeling sand transport
 is the following (see Van Rijn \cite{Rijn1989},
Idier \cite{Idier}):
\begin{equation}\label{M1}
\frac{\partial z}{\partial t}+\frac{1}{1-p}\nabla\cdot q=0.
\end{equation}
In this equation the fields depends on time
  $t\in[0,T)$, for $T>0$, on the horizontal position $x=(x_{1},x_{2})\in \Omega,$ where $\Omega$ is a regular open set of $\mathbb{R}^{2}.$ The field $z=z(t,x)$ is  the height of the seabed
in position $x$ and at time $t$ and $q=q(x,t)$ is the sand volume flow in $x$ and at $t.$ The parameter $p\in[0,1)$ is called sand porosity. Equation (\ref{M1}) has to be coupled with a low linking the sand flow $q$ with the seabed height variation and the velocity of the water near the seabed. Usually, such a law is written
\begin{equation}\label{M2}
    q=q_{f}-|q_{f}|\lambda\nabla z,
\end{equation}
where $q_{f}$ stands for the water velocity  induced sand flow  on a flat seabed and where $|q_{f}|$ stands for its norm. The constant $\lambda$ is the inverse value of the maximum slope of the sediment surface when the water velocity is 0. A generic way to write $q_{f}$ is
\begin{equation}\label{M3}
    q_{f}=\alpha\, \widetilde{\chi}(g(|\gu|)-g(u_{c}))\frac{\gu}{|\gu|},
\end{equation}
where $g$ is a non-negative regular function defined on $\mathbb{R}^{+}$ and
where $\widetilde{\chi}$ is a regular function from $\mathbb{R}$ to $\mathbb{R},$ being 0 on $\mathbb{R}^{-}$ and increasing on $\mathbb{R}^{+}.$ $\gu$ is the water velocity near the seabed,
 $g(u)$ is regular function of $u\in\R^+$ and $u_{c}$ is the threshold under which the water velocity does not make the sand move.\\
Every law encountered in the literature, for instance Meyer-Peter and M\"uller
 \cite{MePetMull} formula, Bagnold and Gadd  formula  (see \cite{Bagnold} and
 \cite{GaddLavSw}) and
 Van Rijn \cite{Rijn1989} formula, is recovered by setting functions $\chi$ and $g.$

In the sequel of the present paper we shall restrict ourselves to
 laws of the Van Rijn type  \cite{Rijn1989}
which consists in writting
\begin{equation}\label{M4}
    q_{f}=\alpha\,\chi\hspace{-2pt}\left(D_{G}(|\tau_{b}|-\tau_{c}) \right)\,\frac{\tau_{b}}{|\tau_{b}|},
\end{equation}
where $\tau_{b}$ is the shear stress density imposed by the water on the seabed. It is linked with $\gu$ by
\begin{equation}\label{M5}
    \tau_{b}=\rho\frac{|\gu|^{2}}{C^{2}}\,\frac{\gu}{|\gu|},
\end{equation}
where $\rho$ is the water density, $C$ is a constant defined by $C=\ln(\frac{12 d}{3D_{G}}),\,\, d$ being the water height above the seabed and $D_{G}$ being the sand speck diameter. The threshold $\tau_{c}$ expresses as
\begin{equation}\label{M6}
    \tau_{c}=\rho\frac{u_{c}^{2}}{C^{2}},
\end{equation}
and $\chi$ is given by
\begin{equation}\label{M7}
\begin{array}{rlc}\chi(\sigma)&=\;\;0& \textrm{ if } \sigma<0,\\
    &=\;\;|\sigma^{3/2}|&\textrm{ if }  \sigma\geq0.\end{array}
\end{equation}
The order of magnitude of constant $\alpha$ is 100.

~

Injecting equation (\ref{M5}) into (\ref{M4}) and (\ref{M2}) we get
\begin{equation}\label{M8}
    q=\alpha\, \chi \hspace{-3pt}\left(D_{G}\,\rho\,\frac{|\gu|^{2}-{u_{c}}^{2}}{C^{2}} \right)
    \left(\frac{\gu}{|\gu|}-\lambda\nabla z\right),
\end{equation}
and equation (\ref{M1}) reads
\begin{equation}\label{M9}
    \frac{\partial z}{\partial t}+\frac{\alpha}{1-p}\nabla\cdot\left[\chi \hspace{-3pt}\left(D_{G}\,\rho\,\frac{|\gu|^{2}-{u_{c}}^{2}}{C^{2}}\right)\left(\frac{\gu}{|\gu|}-\lambda\nabla z\right)\right]=0.
\end{equation}

 \subsection{Scaling}
Now, we will scale (\ref{M9}) to write a dimensionless version of it.
We introduce  a characteristic time $\bar{t}$ and a characteristic length $\bar{L}$ and we define the dimensionless variables $t'$ and $x',$ making $\bar{t}$ and $\bar{L}$ the units by
\begin{equation}\label{M12}
    t=\bar{t}t',\quad x=\bar{L}x'.
\end{equation}
We also define $\bar{z}$ the characteristic height of the dunes and the dimensionless seabed height
\begin{equation}\label{M13}
    z'(t',x')=\frac{1}{\bar{z}}z(\bar{t}t',\bar{L}x').
\end{equation}
Concerning coefficients of equation (\ref{M9}), we introduce $\bar{u}$ the characteristic velocity of the water, we consider the mean water height $H$ and $\bar{M}$ the characteristic height variation due to the tide. Then we define $u'$ being the dimensionless water height variation by
\begin{equation}\label{M14}
    \gu'(t',x')=\frac{1}{\bar{u}}\gu(\bar{t}t',\bar{L}x'),\quad \gm'(t',x')=\frac{1}{\bar{M}}(d(\bar{t}t',\bar{L}x')-H).
\end{equation}
Once those variables and fields are introduced, we first approximate $C,$ taking into account that $\frac{\bar{M}}{H}$ is small.
\begin{equation}\label{M15}
    C=\ln\hspace{-3pt}\left(\frac{4H}{D_{G}}\right)+\ln\hspace{-3pt}\left(1+\frac{\ds \bar{M}}{H}\gm' \right)\simeq
    \ln\hspace{-3pt}\left(\ds\frac{4H}{D_{G}}\right)+\frac{\bar{M}}{H}\gm' .
\end{equation}
From (\ref{M15}) we get
\begin{equation}\label{M16}
    \frac{1}{C^{3}}\simeq\frac{1}{\left(\ln\hspace{-3pt}\left(\ds\frac{4H}{D_{G}}\right)\right)^{3}}\left(1-3\frac{\bar{M}}{H\ln\hspace{-3pt}\left(\ds\frac{4H}{D_{G}}\right)}\gm'\right).
\end{equation}
Since for instance
\begin{equation}\label{M16bis}
    \nabla z(\bar{t}t',\bar{L}x')=\frac{1}{\bar{z}\bar{L}}\nabla'z'(t',x'),
\end{equation}
we get from equation (\ref{M9}) the following equation for $z'$
\begin{equation}\label{M17}
\begin{array}{ccc}
\ds\frac{\partial z'}{\partial t'}-\frac{\lambda}{1-p}\,\alpha\,\frac{\bar{t}\bar{u}^{3}(\rho D_{G})^{3/2}}{\left(\ln(\frac{4H}{D_{G}})\right)^3\bar{L}^{2}}\;\nabla'\cdot
\left(\left(1-3\frac{\bar{M}}{H\ln(\frac{4H}{D_{G}})}\gm'\right)\chi\hspace{-3pt}\left(|\gu'|^{2}-
\frac{{u_{c}}^{2}}{\bar{u}^{2}}\right)\nabla' z'\right)\\~~~~~~~~~~~~~~~~
\ds=\frac{1}{1-p}\,\alpha\,\frac{\bar{t}\bar{u}^{3}(\rho D_{G})^{3/2}}{\left(\ln(\frac{4H}{D_{G}})\right)^3\bar{L}\bar{z}}\;\nabla'\cdot\left(\left(1-3\frac{\bar{M}}{H\ln(\frac{4H}{D_{G}})}\gm'\right)\chi\hspace{-3pt}\left(|\gu'|^{2}-
\frac{{u_{c}}^{2}}{\bar{u}^{2}}\right)\frac{\gu'}{|\gu'|} \right).
\end{array}
\end{equation}
Having this dimensionless model on hand,
 we will now consider several situations in setting the characteristic values for short, mean and long-term
 dune evolution and for small and big sand specks.

~

First, we fix the characteristic sizes which are common
for every situation. Dunes exist within coastal ocean waters over a relatively flat continental shelf, with a water height of about 30 to 50 meters, with tide induced height variations which are
not too strong and with relatively strong tide currents. Then we set
\begin{equation}\label{M18}
    \bar{u}=1\,m/s,\quad H=50\,m,\quad\bar{M}=5\,m.
\end{equation}
Moreover, the order of magnitude of coefficient $\frac{\lambda}{1-p}$ is 1, then we get
\begin{equation}\label{M19}
    \frac{\lambda}{1-p}=1\,\,\textrm{and}\,\,\frac{1}{1-p}=2.
\end{equation}

~

Now we detail the sizes of every characteristic value and of their concerned ratios in equation
 (\ref{M16}), (\ref{M17}) for every situation.

\noindent\textbf{Short-term
 dynamics of dunes made of a small sand specks}\\
Here, we
shall consider that $\bar{t}$ is an observation period of time. We take as $\bar{t}$ the order of magnitude of the smallest period of time during which dunes undergo significant evolution in a tide-submitted environment, i.e.  $\bar{t}=100\, days\sim2400\, hours\sim8.6\; 10^{6}\,s.$ Introducing
$\bar{\omega}$ the main tide frequency, $\bar{t}$ has to be compared with the main tide period
$\frac{1}{\bar{\omega}}\sim13\,hours\sim4.7\;10^{4}\,s.$ This leads to the definition of a small parameter $\epsilon$:
\begin{equation}\label{M20}
    \frac{1}{\bar{t}\bar{\omega}}\sim\frac{1}{200}=\epsilon.
\end{equation}
We consider that the sand speck diameter $D_{G}$ is $0.1mm=10^{-4}\,m.$ According to Flemming \cite{Flemm} and Idier \cite{Idier}, this gives rise to dunes being about 1 meter high, the wave length of which is about 10 meters. Then we set
\begin{equation}\label{M21}
    \bar{z}=1\,m\,\,\textrm{and}\,\,\bar{L}=10\,m.
\end{equation}
We also consider that the critical velocity $u_{c}$ is small compared with $\bar{u}.$ In other words we set
\begin{equation}\label{M21bis}
    \frac{u_{c}^{2}}{\bar{u}^2}=0.
\end{equation}
As the computations of the factors in (\ref{M21}) yields
\begin{equation}\label{M22} \begin{array}{ccc}
\ds\frac{\lambda}{1-p}\alpha\frac{\bar{t}\bar{u}^{3}(\rho D_{G})^{3/2}}{\left(\ln(\frac{4H}{D_{G}})\right)^3\bar{L}^{2}}\sim90\sim\frac{1}{2\epsilon},\\
\ds\frac{\lambda}{1-p}\alpha\frac{\bar{t}\bar{u}^{3}(\rho D_{G})^{3/2}}{\left(\ln(\frac{4H}{D_{G}})\right)^3\bar{L}\bar{z}}\sim1800\sim\frac{10}{\epsilon},\\
\ds\frac{3\bar{M}}{H\ln(\frac{4H}{D_{G}})}\sim2.10^{-2}\sim4\epsilon,
\end{array}\end{equation}
equation (\ref{M17}) reads
\begin{equation}\label{M23}
    \frac{\partial z}{\partial t}-\frac{1}{2\epsilon}\nabla\cdot((1-4\epsilon \gm)|\gu|^{3}\nabla z)
    =\frac{10}{\epsilon}\nabla \cdot((1-4\epsilon \gm)|\gu|^{2}\gu),
 \end{equation}
where we removed the '.\\
Concerning fluid fields $\gu$ and $\gm,$ we assume that they are periodic functions, with modulated amplitude, and of period the tide period. In other words
\begin{equation}\label{M24}
    \gu(x,t)=\mathcal{U}(t,\frac{t}{\epsilon},x),\quad \gm(x,t)=\mathcal{M}(t,\frac{t}{\epsilon},x),
\end{equation}
for functions $\mathcal{U}$ and $\mathcal{M}$ being regular, and such that $\theta\longmapsto(\mathcal{U}(t,\theta,x),\mathcal{M}(t,\theta,x))$ is periodic of period 1, with a null mean value.

Finally, as dunes of the considered kind are, in nature, gathered into dunes fields
it is not completely unrealistic to set equation (\ref{M23}) in a periodic position space.

As matter of the fact, considering  equation (\ref{M23}) is appropriate for the study of short-term
dynamics of dunes made of small sand specks with a mathematical point of view.

~

\noindent\textbf{Short-term dynamics of dunes made of a big sand specks}\\
For this regime, we consider:
\begin{equation}\label{M25}\begin{array}{ccc}
    \ds\bar{t}\sim100\,days\sim2400\,hours\sim8.6\; 10^{6}\,s,\\
    \ds\frac{1}{\bar{\omega}}\sim13 \,hours\sim4.7\;10^{4}\,s\quad D_{G}=5\;10^{-3}\,m\\
    \ds\bar{z}=50\,m,\,\,\bar{L}=300\,m,\,\,u_{c}=\frac{1}{2}\,m/s.
    \end{array}
\end{equation}
Then
\begin{equation}\label{M26} \begin{array}{ccc}
\ds\frac{\lambda}{1-p}\alpha\frac{\bar{t}\bar{u}^{3}(\rho D_{G})^{3/2}}{\left(\ln(\frac{4H}{D_{G}})\right)^3\bar{L}^{2}}\sim90\sim\frac{1}{2\epsilon},\\
\ds\frac{\lambda}{1-p}\alpha\frac{\bar{t}\bar{u}^{3}(\rho D_{G})^{3/2}}{\left(\ln(\frac{4H}{D_{G}})\right)^3\bar{L}\bar{z}}\sim1000\sim\frac{5}{\epsilon},\\
\ds \frac{3\bar{M}}{H\ln(\frac{4H}{D_{G}})}\sim 1.3\;10^{-2}\sim3\epsilon,
\end{array}\end{equation}
equation (\ref{M17}), with the ' removed, gives
\begin{equation}\label{M27}
    \frac{\partial z}{\partial t}-\frac{1}{2\epsilon}\nabla\cdot\left((1-3\epsilon \gm)\chi(|\gu|^{2}-\frac{1}{2})\nabla z\right)\\
    =\frac{5}{\epsilon}\nabla\cdot\left((1-3\epsilon \gm)\chi(|\gu|^{2}-\frac{1}{2})\frac{\gu}{|\gu|}\right).
\end{equation}

~

\noindent\textbf{Mean-term dynamics of dunes made of a small sand specks}\\
By mean-term we mean a period of time of $4.5\, years\sim54\,months\sim1.4\;10^{8}\,s.$
Then, we take $\bar{t}=1.4\;10^{8}\,s,$ which is compared with $\frac{1}{\bar{\omega}}\sim13\, hours\sim4.7\;10^{4}\,s$ giving
\begin{equation}\label{M40}
    \frac{1}{\bar{t}\bar{\omega}}\sim\frac{1}{3000}=\epsilon.
\end{equation}
We also consider a second tide period which is the time for the earth, the moon and the sun to recover approximately the same relative positions. This period of time $\frac{1}{\bar{\omega}_{c}}$ is about one month. So we have
\begin{equation}\label{M41}
    \frac{1}{\bar{t}\bar{\omega}_{c}}\sim\frac{1}{54}\sim\sqrt{\epsilon}.
\end{equation}
We also take
 $D_{G}=5\;10^{-5}\,m$ and
\begin{equation}\label{M42}
    \bar{z}=1\,m,\,\,\bar{L}=10\,m,\,\,{u_{c}}=0.
\end{equation}
Computing the coefficients in equation (\ref{M17}) gives
\begin{equation}\label{M43}
    \frac{\partial z}{\partial t}-\frac{1}{\epsilon}\nabla\cdot((1-\sqrt{\epsilon}\, \gm)|\gu|^{3}\,\nabla z)\\
    =\frac{20}{\epsilon}\nabla\cdot((1-\sqrt{\epsilon}\, \gm)|\gu|^{2}\gu).
\end{equation}
As was previously seen, it
 is reasonable to set this equation in a periodic domain and concerning the fluid fields we consider
\begin{equation}\label{M44}
    \gu(t,x)=\widetilde{\mathcal{U}} (t,\frac{t}{\sqrt{\epsilon}},\frac{t}{\epsilon}),\,\,\gm(t,x)=
    \mathcal{M}(t,\frac{t}{\sqrt{\epsilon}},\frac{t}{\epsilon}).
\end{equation}
to take into account the two tide periods under consideration. In (\ref{M44}) we take $\mathcal{U}$ and $\mathcal{M}$ as regular functions such that
\begin{equation}\label{M45}\left\{\begin{array}{ccc}
\tau\longmapsto(\widetilde{\mathcal{U}}(t,\tau,\theta,x),\mathcal{M}(t,\tau,\theta,x))\\
    \theta\longmapsto(\widetilde{\mathcal{U}}(t,\tau,\theta,x),\mathcal{M}(t,\tau,\theta,x))
\end{array}\right.\end{equation}
are periodic of period 1.

~

\noindent\textbf{Long-term dynamics of dunes made of small sand specks}\\
We take here $\bar{t}\sim16\,years\sim1.4\;10^{5}\,hours \sim5\;10^{9}s$. We compare this period of time  with the  second tide period $\frac{1}{\bar{\omega}_{c}}\sim1\,month\sim2.6\;10^{6}s.$ Then, we
 define $\epsilon$ by
\begin{equation}\label{M50}
    \frac{1}{\bar{t}\bar{\omega}_{c}}\sim\frac{1}{192}=\epsilon.
\end{equation}
We set
\begin{equation}\label{M51}
    D_{G}=7.10^{-5}\,m,\,\, \bar{z}=1\,m,\,\,\bar{L}=10\,m,\,\,u_{c}=0 \, m/s.
\end{equation}
with those values equation (\ref{M17}) yields
\begin{equation}\label{M52}
    \frac{\partial z}{\partial t}+\frac{1}{\epsilon^{2}}\nabla\cdot((1-4\epsilon \gm)|\gu|^{3}\,\nabla z)\\
    =\frac{20}{\epsilon^{2}}\nabla\cdot((1-4\epsilon \gm)|\gu|^{2}\gu).
\end{equation}
As, at the second tide period scale the tide phenomena may almost be considered as really periodic we set
\begin{equation}\label{53}\begin{array}{ccc}
    \ds\gu(x,t)=\mathcal{U}(\frac{t}{\epsilon},x)+\epsilon^{2}\mathcal{U}_{2}(t,\frac{t}{\epsilon},x),\vspace{2pt}\\
    \ds\gm(x,t)=\mathcal{M}(\frac{t}{\epsilon},x)+\epsilon^{2}\mathcal{M}_{2}(t,\frac{t}{\epsilon},x),
    \end{array}
\end{equation}
where $\mathcal{U},\,\,\mathcal{U}_{2},\,\,\mathcal{M}\,\,\mathcal{M}_{2}$ are regular functions such that $\theta\longmapsto(\mathcal{U}(\theta,x),\mathcal{U}_{2}(t,\theta,x),\mathcal{M}(\theta,x),\mathcal{M}_{2}(t,\theta,x))$
is periodic of period 1 and such that
\begin{equation}\label{M54}
    \int_{0}^{1}\mathcal{U}(\theta,x)d\theta=0,
\end{equation}
\begin{equation}\label{M54.111}
    \int_{0}^{1}\mathcal{M}(\theta,x)d\theta=0.
\end{equation}
\section{Existence and estimates, proof of theorem \ref{th1}}\label{secExEs}
Defining
\begin{equation}\label{3.1}
    \mathcal{A}^{\epsilon}(t,x)=\widetilde{\mathcal{A}}_{\epsilon}(t,\frac{t}{\sqrt{\epsilon}},\frac{t}{\epsilon},x),
\end{equation}
where
\begin{equation}\label{3.2}\widetilde{\mathcal{A}}_{\epsilon}(t,\tau,\theta,x)=a\Big(1-b\sqrt{\epsilon}\,\mathcal{M}
(t,\tau,\theta,x)\Big)\,
g_a\Big(|\mathcal{U}(t,\theta,x)+\sqrt{\epsilon}\,\mathcal{U}_{1}(t,\tau,\theta,x)|\Big),
\end{equation}
\begin{equation}\label{3.3}
    \mathcal{C}^{\epsilon}(t,x)=\widetilde{\mathcal{C}}_{\epsilon}(t,\frac{t}{\sqrt{\epsilon}},\frac{t}{\epsilon},x),
\end{equation}
and where
\begin{eqnarray}\label{3.4}{ \widetilde{\mathcal{C}}_{\epsilon}(t,\tau,\theta,x)=c\Big(1-b\sqrt{\epsilon}\mathcal{M}(t,\tau,\theta,x)\Big)\;g_c\Big(|\mathcal{U}(t,\theta,x)
+\sqrt{\epsilon}\,\mathcal{U}_{1}(t,\tau,\theta,x)|\Big) {} }~~~~~
\nonumber\\
{}
\times\frac{\mathcal{U}(t,\theta,x)+\sqrt{\epsilon}\,\mathcal{U}(t,\tau,\theta,x)}{|\mathcal{U}(t,\theta,x)+\sqrt{\epsilon}\,\mathcal{U}(t,\tau,\theta,x)|},
\end{eqnarray}
equation (\ref{eq5}), (\ref{eq11}) with assumptions (\ref{eq6}) and (\ref{eq7}) reads
\begin{equation}\label{3.5}\left\{\begin{array}{cc}
    \ds \frac{\partial z^{\epsilon}}{\partial t}-\frac{1}{\epsilon}\nabla\cdot(\mathcal{A}^{\epsilon}\nabla z^{\epsilon})=\frac{1}{\epsilon}\nabla\cdot\mathcal{C}^{\epsilon},\\
    \ds z^{\epsilon}_{|t=0}=z_{0}.
\end {array}\right.\end{equation}

In the same way, setting
\begin{equation}\label{3.6}\widetilde{\mathcal{A}}_{\epsilon}(t,\tau,\theta,x)=\widetilde{\mathcal{A}}_{\epsilon}(t,\theta,x)
=a(1-b\epsilon\mathcal{M}(t,\theta,x))\,
g_a(|\mathcal{U}(t,\theta,x)|),
\end{equation}
and
\begin{equation}\label{3.7} \widetilde{\mathcal{C}}_{\epsilon}(t,\tau,\theta,x)=\widetilde{\mathcal{C}}_{\epsilon}(t,\theta,x)=
c(1-b\epsilon\mathcal{M}(t,\theta,x))\,g_c(|\mathcal{U}(t,\theta,x)|)\,
\frac{\mathcal{U}(t,\theta,x)}{|\mathcal{U}(t,\theta,x)|},
\end{equation}
and defining $\mathcal{A}^{\epsilon}$ and $\mathcal{C}^{\epsilon}$ from $\widetilde{\mathcal{A}}_{\epsilon}$ and $\widetilde{\mathcal{C}}_{\epsilon}$ by (\ref{3.1}) and (\ref{3.3}), we may deduce that equation (\ref{eq1}), with assumption (\ref{eq3}), can be set in the form (\ref{3.5}).

~

From assumptions (\ref{eq2}) and (\ref{eq4}) or (\ref{eq2}) and (\ref{eq8}), $\widetilde{\mathcal{A}}_{\epsilon}$ defined by (\ref{3.2}) or (\ref{3.6}) and $\widetilde{\mathcal{C}}_{\epsilon}$ defined by (\ref{3.4}) or (\ref{3.7})
satisfy the following properties
\begin{equation}\label{3.12}\begin{array}{ccc}
    \ds|\widetilde{\mathcal{A}}_{\epsilon}|\leq\gamma,\,\,|\widetilde{\mathcal{C}}_{\epsilon}|\leq\gamma,\,\,\left|\frac{\partial\widetilde{\mathcal{A}}_{\epsilon}}{\partial t}\right|\leq\gamma,\,\,\left|\frac{\partial\widetilde{\mathcal{C}}_{\epsilon}}{\partial t}\right|\leq\gamma,\\
    \ds\left|\frac{\partial\widetilde{\mathcal{A}}_{\epsilon}}{\partial \theta}\right|\leq\gamma,\,\,
    \left|\frac{\partial\widetilde{\mathcal{C}}_{\epsilon}}{\partial \theta}\right|\leq\gamma,\,\,|\nabla\widetilde{\mathcal{A}}_{\epsilon}|\leq\gamma,\,\,|\nabla\cdot\widetilde{\mathcal{C}}_{\epsilon}|\leq\gamma,\\
    \ds \left|\frac{\partial\nabla\widetilde{\mathcal{A}}_{\epsilon}}{\partial t}\right|\leq\gamma,
    \,\,\ds \left|\frac{\partial\nabla\cdot\widetilde{\mathcal{C}}_{\epsilon}}{\partial t}\right|\leq\gamma ,
\end{array}\end{equation}
\begin{equation}\label{3.13}
\left|\frac{\partial\widetilde{\mathcal{A}}_{\epsilon}}{\partial\tau}\right|\leq\sqrt{\epsilon}\gamma,\,\,
\left |\frac{\partial\widetilde{\mathcal{C}}_{\epsilon}}{\partial\tau} \right |\leq\sqrt{\epsilon}\gamma,\,\,
\left |\frac{\partial\nabla\widetilde{\mathcal{A}}_{\epsilon}}{\partial\tau} \right |\leq\sqrt{\epsilon}\gamma,
\end{equation}
on $\mathbb{R}^{+}\times\mathbb{R}\times\mathbb{R}\times\torus^{2},$ for a constant $\gamma$ depending only on $a,b,c$ and $d$ and not on $\epsilon.$ \\
Concerning (\ref{3.13}) in the case where $\widetilde{\mathcal{A}}_{\epsilon}$ and $\widetilde{\mathcal{C}}_{\epsilon}$ are defined by (\ref{3.6}) and (\ref{3.7}), it reduces to
\begin{equation}\label{3.14}
    \frac{\partial\widetilde{\mathcal{A}}_{\epsilon}}{\partial\tau}=\frac{\partial\widetilde{\mathcal{A}}_{\epsilon}}{\partial\tau}=0.
\end{equation}
Moreover, for every $\epsilon,$ $0\leq\epsilon\leq1,$ $\widetilde{\mathcal{A}}_{\epsilon}\geq0,$
\begin{equation}\label{3.15}\left\{\begin{array}{ccc}
    \tau\longmapsto(\widetilde{\mathcal{A}}_{\epsilon},\widetilde{\mathcal{C}}_{\epsilon})\,\,\textrm{is periodic of period}\,\,1,\\
    \theta\longmapsto(\widetilde{\mathcal{A}}_{\epsilon},\widetilde{\mathcal{C}}_{\epsilon})\,\,\textrm{is periodic of period}\,\,1,\end{array}\right.
\end{equation}
and there exists a constant $\widetilde{G}_{thr}$ depending only on $a,b,d$ and $G_{thr}$ and two numbers $\theta_{\alpha}$ and $\theta_{\omega}$ in $[0,1],\,\,\theta_{\alpha}<\theta_{\omega},$ such that
\begin{equation}\label{3.16}
    \widetilde{\mathcal{A}}_{\epsilon}(t,\tau,\theta,x)\geq\widetilde{G}_{thr},
\end{equation}
for every $t\in\mathbb{R},$ $\tau\in\mathbb{R},\,\,x\in\torus^{2}$ and $\theta\in[\theta_{\alpha},\theta_{\omega}]$  and such that $\forall (t,\tau,\theta,x)\in \mathbb{R}^{+}\times\mathbb{R}\times\mathbb{R}\times\torus^{2}$
\begin{equation}\label{3.17}
\widetilde{\mathcal{A}}_{\epsilon}(t,\tau,\theta,x)\leq\widetilde{G}_{thr}\Longrightarrow
\left\{\begin{array}{ccc}
\ds\frac{\partial\widetilde{\mathcal{A}}_{\epsilon}}{\partial t}(t,\tau,\theta,x)=0,\,\,\frac{\partial\widetilde{\mathcal{A}}_{\epsilon}}{\partial \tau}(t,\tau,\theta,x)=0,\,\,\nabla\widetilde{\mathcal{A}}_{\epsilon}(t,\tau,\theta,x)=0, \vspace{3pt}\\
\ds\frac{\partial\widetilde{\mathcal{C}}_{\epsilon}}{\partial t}(t,\tau,\theta,x)=0,\,\,\frac{\partial\widetilde{\mathcal{C}}_{\epsilon}}{\partial \tau}(t,\tau,\theta,x)=0,\,\,\nabla\cdot\widetilde{\mathcal{C}}_{\epsilon}(t,\tau,\theta,x)=0.\\
\end{array}\right.\end{equation}

We also have the following inequalities
\begin{equation}\label{3.171}
    |\widetilde{\mathcal{C}}_{\epsilon}|\leq\gamma|\widetilde{\mathcal{A}}_{\epsilon}|,
\end{equation}
\begin{equation}\label{3.172}
    |\widetilde{\mathcal{C}}_{\epsilon}|^{2}\leq\gamma|\widetilde{\mathcal{A}}_{\epsilon}|,
\end{equation}
\begin{equation}\label{3.173}
    |\nabla\widetilde{\mathcal{A}}_{\epsilon}|\leq\gamma|\widetilde{\mathcal{A}}_{\epsilon}|,
\end{equation}
\begin{equation}\label{3.174}
    \Big|\frac{\partial\widetilde{\mathcal{A}}_{\epsilon}}{\partial t}\Big|\leq\gamma|\widetilde{\mathcal{A}}_{\epsilon}|,
\end{equation}
\begin{equation}\label{3.175}
    \Big|\frac{\partial(\nabla\widetilde{\mathcal{A}}_{\epsilon})}{\partial t}\Big|^{2}\leq\gamma|\widetilde{\mathcal{A}}_{\epsilon}|,
\end{equation}
\begin{equation}\label{3.176}
    \Big|\frac{\partial\widetilde{\mathcal{A}}_{\epsilon}}{\partial\tau}\Big|^{2}
    \leq\epsilon\gamma|\widetilde{\mathcal{A}}_{\epsilon}|,
\end{equation}
\begin{equation}\label{3.177}
    \Big|\frac{\partial\nabla\widetilde{\mathcal{A}}_{\epsilon}}{\partial\tau}\Big|^{2}\leq\epsilon\gamma|\widetilde{\mathcal{A}}_{\epsilon}|,
\end{equation}
\begin{equation}\label{3.178}
    \Big|\nabla \cdot\widetilde{\mathcal{C}}_{\epsilon}\Big|\leq\gamma|\widetilde{\mathcal{A}}_{\epsilon}|,
\end{equation}
\begin{equation}\label{3.179}
    \Big|\frac{\partial\widetilde{\mathcal{C}}_{\epsilon}}{\partial t}\Big|\leq\gamma|\widetilde{\mathcal{A}}_{\epsilon}|,
\end{equation}
\begin{equation}\label{3.1710} 
    \Big|\frac{\partial\widetilde{\mathcal{C}}_{\epsilon}}{\partial t}\Big|^{2}\leq\gamma^2|\widetilde{\mathcal{A}}_{\epsilon}|,
\end{equation}
possibly changing the value of $\gamma$ and making it also depend on $\widetilde{G}_{thr}.$
\\
Inequality (\ref{3.171}) is a direct consequence of (\ref{eq2}).
Since for small values of $|\widetilde{\mathcal{C}}_{\epsilon}|,|\widetilde{\mathcal{C}}_{\epsilon}|^{2}\leq|\widetilde{\mathcal{C}}_{\epsilon}|$ and since $\widetilde{\mathcal{C}}_{\epsilon}$ and $\widetilde{\mathcal{A}}_{\epsilon}$ are bounded, inequality (\ref{3.172}) follows from (\ref{3.171}).
When $|\widetilde{\mathcal{A}}_{\epsilon}|\leq\widetilde{G}_{thr}$, then $\nabla\widetilde{\mathcal{A}}_{\epsilon}=0$. Hence (\ref{3.173}) is realized. When $|\widetilde{\mathcal{A}}_{\epsilon}|\geq\widetilde{G}_{thr}$, since $\widetilde{\mathcal{A}}_{\epsilon}$ and $\nabla\widetilde{\mathcal{A}}_{\epsilon}$ are bounded, (\ref{3.173}) is obviously realized. Hence (\ref{3.173}) is true. With a similar argument (\ref{3.174})-(\ref{3.177}) may be obtained.
In order to obtain (\ref{3.178}), we just have to notice that, when  $|\widetilde{\mathcal{A}}_{\epsilon}|\leq \widetilde G_{thr}$,  $\nabla\cdot\widetilde{\mathcal{C}}_{\epsilon}=0.$ Hence we can give the same argument as above. In the same manner, the last two inequalities
 may be obtained.

~

We now consider for a positive small parameter $\nu$ the following regularization of (\ref{3.5})
\begin{equation}\label{3.20}\left\{ \begin{array}{ccc}
\ds \frac{\partial z^{\epsilon,\nu}}{\partial t}-\frac{1}{\epsilon}\nabla\cdot((\mathcal{A}^{\epsilon}+\nu)\nabla z^{\epsilon,\nu})=\frac{1}{\epsilon}\nabla\cdot\mathcal{C}^{\epsilon}\\
z^{\epsilon,\nu}_{|t=0}=z_{0}.\end{array}\right.
    \end{equation}
Denoting by $\|\cdot\|_{2}$ and $\|\cdot\|_{\infty}$ the usual norms of spaces $L^{2}(\torus^{2})$ and $L^{\infty}(\torus^{2}),$ applying the energy estimate and the maximum principle, (see for instance Lazyzenskaja, Solonnikov and Ural'Ceva\cite{LSU} or Lions\cite{J.L.L}) we  can obtain the following lemma.
\begin{lemma}
For any $T>0,$ if $z_{0}\in L^{2}\cap L^{\infty}(\torus^{2})$  under assumptions (\ref{3.15}), then for any $\epsilon>0$ and $\nu>0,$ there exists a unique solution $z^{\epsilon,\nu}\in L^{\infty}([0,T);L^{2}\cap L^{\infty}(\torus^{2}))$ to (\ref{3.20}). Moreover it satisfies
\begin{equation}\label{3.22}
    \|z^{\epsilon,\nu}\|_{2} + \|z^{\epsilon,\nu}\|_{\infty}\leq\frac{\gamma_{1}}{\epsilon},
\end{equation}
for a constant $\gamma_{1}$ depending only on $\gamma$ and $\|z_{0}\|_{2} + \|z_{0}\|_{\infty}.$
 \end{lemma}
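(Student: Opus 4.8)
The plan is to treat \eqref{3.20} as a genuinely uniformly parabolic Cauchy problem on $\torus^2$ — this is exactly what the $+\nu$ regularization buys — and to read off from the structural inequalities \eqref{3.171}--\eqref{3.1710} an energy estimate and a maximum-principle bound that are \emph{both uniform in} $\nu$. \textbf{Step 1 (existence and uniqueness).} For fixed $\epsilon>0$, $\nu>0$ the diffusion coefficient satisfies $\nu\le\mathcal{A}^\epsilon+\nu\le\gamma+\nu$ by \eqref{3.12}, so \eqref{3.20} is a linear, uniformly parabolic equation in divergence form with bounded (indeed regular) coefficients, while the forcing $\tfrac1\epsilon\nabla\cdot\mathcal{C}^\epsilon$ lies in $L^\infty([0,T);L^\infty(\torus^2))$ since $|\nabla\cdot\widetilde{\mathcal{C}}_\epsilon|\le\gamma$ by \eqref{3.12} and $\nabla\cdot\mathcal{C}^\epsilon(t,x)=(\nabla\cdot\widetilde{\mathcal{C}}_\epsilon)(t,\tfrac t{\sqrt\epsilon},\tfrac t\epsilon,x)$. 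Existence and uniqueness of a weak solution $z^{\epsilon,\nu}\in L^2([0,T);H^1(\torus^2))\cap C([0,T];L^2(\torus^2))$ with $\partial_t z^{\epsilon,\nu}\in L^2([0,T);H^{-1}(\torus^2))$ then follow from the Galerkin method or directly from \cite{LSU,J.L.L}, and, $z_0$ and the forcing being bounded, parabolic regularity upgrades this to $z^{\epsilon,\nu}\in L^\infty([0,T);L^2\cap L^\infty(\torus^2))$.

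\textbf{Step 2 (the $L^2$ bound, uniformly in $\nu$).} Testing \eqref{3.20} with $z^{\epsilon,\nu}$ and integrating by parts on $\torus^2$,
\begin{equation*}
\frac12\frac{d}{dt}\|z^{\epsilon,\nu}\|_2^2+\frac1\epsilon\int_{\torus^2}(\mathcal{A}^\epsilon+\nu)|\nabla z^{\epsilon,\nu}|^2\,dx=-\frac1\epsilon\int_{\torus^2}\mathcal{C}^\epsilon\cdot\nabla z^{\epsilon,\nu}\,dx .
\end{equation*}
The point is to control the right-hand side with \emph{no} negative power of $\nu$ (the crude use of the $\tfrac\nu\epsilon$-coercivity would leave a $\tfrac1{\nu\epsilon}$ that blows up as $\nu\to0$). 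Inequality \eqref{3.172}, which reads $|\mathcal{C}^\epsilon|^2\le\gamma\,\mathcal{A}^\epsilon$ and in particular forces $\mathcal{C}^\epsilon=0$ wherever $\mathcal{A}^\epsilon=0$, is precisely what is needed: by Cauchy--Schwarz and Young,
\begin{equation*}
\Big|\frac1\epsilon\int_{\torus^2}\mathcal{C}^\epsilon\cdot\nabla z^{\epsilon,\nu}\Big|\le\frac1{2\epsilon}\int_{\torus^2}\frac{|\mathcal{C}^\epsilon|^2}{\mathcal{A}^\epsilon+\nu}+\frac1{2\epsilon}\int_{\torus^2}(\mathcal{A}^\epsilon+\nu)|\nabla z^{\epsilon,\nu}|^2\le\frac{\gamma}{2\epsilon}+\frac1{2\epsilon}\int_{\torus^2}(\mathcal{A}^\epsilon+\nu)|\nabla z^{\epsilon,\nu}|^2,
\end{equation*}
because $\tfrac{|\mathcal{C}^\epsilon|^2}{\mathcal{A}^\epsilon+\nu}\le\tfrac{|\mathcal{C}^\epsilon|^2}{\mathcal{A}^\epsilon}\le\gamma$ on $\{\mathcal{A}^\epsilon>0\}$ and vanishes elsewhere, and $|\torus^2|=1$. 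Absorbing the gradient term gives $\tfrac{d}{dt}\|z^{\epsilon,\nu}\|_2^2\le\tfrac\gamma\epsilon$, hence $\|z^{\epsilon,\nu}(t)\|_2^2\le\|z_0\|_2^2+\gamma T/\epsilon$ for all $t\in[0,T]$.

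\textbf{Step 3 ($L^\infty$ bound, conclusion, uniqueness).} Since $\nabla\cdot\mathcal{C}^\epsilon\le|\nabla\cdot\mathcal{C}^\epsilon|\le\gamma$, the spatially constant functions $M_\pm(t)=\pm(\|z_0\|_\infty+\gamma t/\epsilon)$ are, respectively, a super- and a subsolution of \eqref{3.20} lying above and below the datum at $t=0$; testing the equation satisfied by $(z^{\epsilon,\nu}-M_+)^+$ (and the analogue for $M_-$) with itself, exactly as in Step 2, shows these parts remain $0$, so $|z^{\epsilon,\nu}(t,x)|\le\|z_0\|_\infty+\gamma T/\epsilon$ on $[0,T]\times\torus^2$ (alternatively, invoke the weak maximum principle of \cite{LSU}). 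Combining with Step 2 and using $0<\epsilon\le1$ (the only regime of interest; otherwise the bound is immediate) to replace $\epsilon^{-1/2}$ by $\epsilon^{-1}$ yields $\|z^{\epsilon,\nu}\|_2+\|z^{\epsilon,\nu}\|_\infty\le\gamma_1/\epsilon$ with $\gamma_1$ depending only on $\gamma$, $T$ and $\|z_0\|_2+\|z_0\|_\infty$. Uniqueness is already contained in Step 2: the difference of two solutions solves \eqref{3.20} with zero forcing and zero datum, so the energy identity without the $\mathcal{C}^\epsilon$ term gives $\tfrac{d}{dt}\|\cdot\|_2^2\le0$, whence it vanishes.

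The only genuinely delicate point is the $\nu$-uniformity in Step 2. Everything there hinges on reading \eqref{3.172} as ``$\mathcal{C}^\epsilon$ is dominated by $\sqrt{\mathcal{A}^\epsilon}$, and in particular is supported in $\{\mathcal{A}^\epsilon>0\}$'': this is what allows the first-order term to be absorbed into the \emph{full} diffusion $(\mathcal{A}^\epsilon+\nu)|\nabla z^{\epsilon,\nu}|^2$ rather than into its $\nu$-part alone, keeping the constant independent of $\nu$ and hence usable in the $\nu\to0$ limit performed in the next step of the paper.
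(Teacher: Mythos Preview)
Your proof is correct and follows exactly the approach the paper indicates: the paper does not give a detailed argument here but simply writes ``applying the energy estimate and the maximum principle, (see for instance \cite{LSU} or \cite{J.L.L}) we can obtain the following lemma,'' and your Steps~2 and~3 are a faithful working-out of precisely those two ingredients. Your emphasis on the $\nu$-uniformity via \eqref{3.172} is well placed, since this is what makes the passage $\nu\to0$ in Corollary~\ref{corEstLoc} legitimate; note incidentally that $|\mathcal{C}^\epsilon|^2\le\gamma\mathcal{A}^\epsilon\le\gamma(\mathcal{A}^\epsilon+\nu)$ gives $\tfrac{|\mathcal{C}^\epsilon|^2}{\mathcal{A}^\epsilon+\nu}\le\gamma$ directly, without the case split, and that your constant $\gamma_1$ carries an additional $T$-dependence which the paper's statement suppresses.
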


As estimate (\ref{3.22}) depends on $\nu,$ letting $\nu$ go towards 0 we obtain the following corollary.
\begin{corollary} \label{corEstLoc}
For any $T>0,$ if $z_{0}\in  L^{2}\cap L^{\infty}(\torus^{2}),$ and under assumptions (\ref{3.15}), then for any $\epsilon>0,$ there exists a unique solution $z^{\epsilon}\in L^{\infty}([0,T);L^{2}\cap L^{\infty}(\torus^{2}))$
to (\ref{3.5}). Moreover it satisfies
\begin{equation}\label{3.24}
    \|z^{\epsilon}\|_{2} + \|z^{\epsilon}\|_{\infty}\leq\frac{\gamma_{1}}{\epsilon}.
\end{equation}
\end{corollary}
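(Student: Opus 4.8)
The plan is to prove Corollary~\ref{corEstLoc} by letting $\nu\to0$ in the family $\{z^{\epsilon,\nu}\}$ supplied by the lemma, the value of $\epsilon>0$ being fixed throughout. First I would sharpen the energy estimate behind the lemma: testing~(\ref{3.20}) against $z^{\epsilon,\nu}$ and integrating over $\torus^{2}$ gives $\frac{1}{2}\frac{d}{dt}\|z^{\epsilon,\nu}\|_{2}^{2}+\frac{1}{\epsilon}\int_{\torus^{2}}(\mathcal{A}^{\epsilon}+\nu)|\nabla z^{\epsilon,\nu}|^{2}=-\frac{1}{\epsilon}\int_{\torus^{2}}\mathcal{C}^{\epsilon}\cdot\nabla z^{\epsilon,\nu}$, and the right-hand side is bounded by $\frac{1}{2\epsilon}\int|\mathcal{C}^{\epsilon}|^{2}/(\mathcal{A}^{\epsilon}+\nu)+\frac{1}{2\epsilon}\int(\mathcal{A}^{\epsilon}+\nu)|\nabla z^{\epsilon,\nu}|^{2}$, where $|\mathcal{C}^{\epsilon}|^{2}/(\mathcal{A}^{\epsilon}+\nu)\le\gamma$ by~(\ref{3.172}). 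Absorbing the gradient term and integrating in $t$ yields, besides~(\ref{3.22}), a bound on $\int_{0}^{T}\int_{\torus^{2}}(\mathcal{A}^{\epsilon}+\nu)|\nabla z^{\epsilon,\nu}|^{2}$ uniform in $\nu$. Since $\mathcal{A}^{\epsilon}$ is bounded by~(\ref{3.12}), this makes the flux $\mathbf{j}^{\epsilon,\nu}:=(\mathcal{A}^{\epsilon}+\nu)\nabla z^{\epsilon,\nu}$ and the weighted gradient $\mathbf{k}^{\epsilon,\nu}:=\sqrt{\mathcal{A}^{\epsilon}+\nu}\,\nabla z^{\epsilon,\nu}$ bounded in $L^{2}([0,T)\times\torus^{2})$ uniformly in $\nu$, and, via~(\ref{3.20}), $\partial_{t}z^{\epsilon,\nu}$ bounded in $L^{2}([0,T);H^{-1}(\torus^{2}))$.

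Then I would extract a sequence $\nu_{k}\to0$ along which $z^{\epsilon,\nu_{k}}\rightharpoonup z^{\epsilon}$ weak-$\ast$ in $L^{\infty}([0,T);L^{2}\cap L^{\infty}(\torus^{2}))$, $\mathbf{j}^{\epsilon,\nu_{k}}\rightharpoonup\mathbf{j}^{\epsilon}$ and $\mathbf{k}^{\epsilon,\nu_{k}}\rightharpoonup\mathbf{k}^{\epsilon}$ weakly in $L^{2}$, and $z^{\epsilon,\nu_{k}}\to z^{\epsilon}$ in $C([0,T);H^{-1}(\torus^{2}))$ by an Aubin--Lions argument. Writing the distributional form of~(\ref{3.20}) against $\varphi\in C^{\infty}([0,T)\times\torus^{2})$ vanishing for $t$ near $T$ and letting $k\to\infty$, every term passes to the limit through weak$\,\times\,$strong pairings, so that $z^{\epsilon}$ satisfies $\partial_{t}z^{\epsilon}-\frac{1}{\epsilon}\nabla\cdot\mathbf{j}^{\epsilon}=\frac{1}{\epsilon}\nabla\cdot\mathcal{C}^{\epsilon}$ with the initial datum $z_{0}$ encoded in the weak formulation; there remains to identify $\mathbf{j}^{\epsilon}=\mathcal{A}^{\epsilon}\nabla z^{\epsilon}$.

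This identification is the step I expect to be the main obstacle, since $\mathcal{A}^{\epsilon}$ degenerates, so there is no $H^{1}$ bound on $z^{\epsilon,\nu}$ and one cannot simply pass to the limit in $\nabla z^{\epsilon,\nu}$. Writing $\mathbf{j}^{\epsilon,\nu}=\sqrt{\mathcal{A}^{\epsilon}+\nu}\,\mathbf{k}^{\epsilon,\nu}$ and using that $\sqrt{\mathcal{A}^{\epsilon}+\nu}\to\sqrt{\mathcal{A}^{\epsilon}}$ uniformly, one gets $\mathbf{j}^{\epsilon}=\sqrt{\mathcal{A}^{\epsilon}}\,\mathbf{k}^{\epsilon}$, so it suffices to show that $\mathbf{k}^{\epsilon}$ is the weighted gradient of $z^{\epsilon}$, i.e.\ that $\int\mathbf{k}^{\epsilon}\cdot\psi=-\int z^{\epsilon}\,\nabla\cdot(\sqrt{\mathcal{A}^{\epsilon}}\,\psi)$ for every smooth vector field $\psi$. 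The crucial tool is the structural inequality~(\ref{3.173}), $|\nabla\mathcal{A}^{\epsilon}|\le\gamma\mathcal{A}^{\epsilon}$: it gives $|\nabla\sqrt{\mathcal{A}^{\epsilon}+\nu}|=|\nabla\mathcal{A}^{\epsilon}|/(2\sqrt{\mathcal{A}^{\epsilon}+\nu})\le\frac{\gamma}{2}\sqrt{\mathcal{A}^{\epsilon}+\nu}$, hence a $\nu$-uniform $L^{\infty}$ bound, and, since $\nabla\mathcal{A}^{\epsilon}$ vanishes wherever $\mathcal{A}^{\epsilon}$ does, pointwise a.e.\ convergence of $\nabla\sqrt{\mathcal{A}^{\epsilon}+\nu_{k}}$ (to $\nabla\mathcal{A}^{\epsilon}/(2\sqrt{\mathcal{A}^{\epsilon}})$ on $\{\mathcal{A}^{\epsilon}>0\}$ and to $0$ elsewhere), hence strong $L^{2}$ convergence by dominated convergence. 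Passing to the limit in $\int\mathbf{k}^{\epsilon,\nu_{k}}\cdot\psi=-\int z^{\epsilon,\nu_{k}}\bigl(\sqrt{\mathcal{A}^{\epsilon}+\nu_{k}}\,\nabla\cdot\psi+\nabla\sqrt{\mathcal{A}^{\epsilon}+\nu_{k}}\cdot\psi\bigr)$ --- the weakly convergent $z^{\epsilon,\nu_{k}}$ against strongly convergent coefficients --- then gives the claimed identity, so $\mathbf{j}^{\epsilon}=\sqrt{\mathcal{A}^{\epsilon}}\,\mathbf{k}^{\epsilon}=\mathcal{A}^{\epsilon}\nabla z^{\epsilon}$ and $z^{\epsilon}$ solves~(\ref{3.5}).

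Finally, uniqueness follows from a comparison or energy argument: the difference $w$ of two solutions solves the same equation with zero source and $w_{|t=0}=0$, and either the maximum principle (the solutions lying in $L^{\infty}$) or a pairing of the equation with $w$ --- legitimate after a time-regularization, using $\partial_{t}w\in L^{2}([0,T);H^{-1})$ and $\sqrt{\mathcal{A}^{\epsilon}}\,\nabla w\in L^{2}$ --- gives $\frac{1}{2}\frac{d}{dt}\|w\|_{2}^{2}=-\frac{1}{\epsilon}\int_{\torus^{2}}\mathcal{A}^{\epsilon}|\nabla w|^{2}\le0$, whence $w\equiv0$. The limit is therefore independent of the extracted subsequence, so the whole family converges to it, and~(\ref{3.24}) is inherited from~(\ref{3.22}) by weak-$\ast$ lower semicontinuity of $\|\cdot\|_{2}$ and $\|\cdot\|_{\infty}$.
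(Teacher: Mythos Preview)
Your approach is the same as the paper's --- pass to the limit $\nu\to0$ in the regularised family --- but the paper disposes of the corollary in a single sentence (``letting $\nu$ go towards $0$'') and records uniqueness as ``a direct consequence of the linearity of this equation''. You have supplied a genuinely careful version of that limit, including the flux identification, which the paper does not even mention.

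Two remarks. First, the corollary as stated assumes only~(\ref{3.15}), whereas you invoke~(\ref{3.172}) and~(\ref{3.173}); in the paper's setting those inequalities are available, but they are not part of the hypotheses here. You can avoid~(\ref{3.172}) by keeping the source term as $\frac{1}{\epsilon}\int(\nabla\cdot\mathcal{C}^{\epsilon})\,z^{\epsilon,\nu}$ and bounding it via $|\nabla\cdot\widetilde{\mathcal{C}}_{\epsilon}|\le\gamma$ from~(\ref{3.12}), which already yields a $\nu$-free $L^{2}$ bound. Second, the flux identification can be bypassed entirely: write the weak formulation with both spatial derivatives on the test function, i.e.\ pair with $\nabla\cdot\bigl((\mathcal{A}^{\epsilon}+\nu)\nabla\varphi\bigr)=\nabla\mathcal{A}^{\epsilon}\cdot\nabla\varphi+(\mathcal{A}^{\epsilon}+\nu)\Delta\varphi$, which converges uniformly as $\nu\to0$ because $\nabla\mathcal{A}^{\epsilon}$ is bounded by~(\ref{3.12}); then weak-$\ast$ convergence of $z^{\epsilon,\nu}$ alone suffices to pass to the limit, and~(\ref{3.173}) is not needed. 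This is presumably why the paper feels entitled to treat the step as routine. Your argument is correct, just more elaborate than the situation demands.
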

The uniqueness of the solution to (\ref{3.5}) is a direct consequence of the linearity of this equation.

~

~

As we want to study the asymptotic behavior of $z^{\epsilon}$ as $\epsilon$ goes to 0, estimates  (\ref{3.24}) and (\ref{3.22}) are not enough. We need estimates which do not depend on $\epsilon.$ For this, we first consider the following problems, which consists in finding $\mathcal{S}^{\nu}=\mathcal{S}^{\nu}(t,\tau,\theta,x)$ and
$\mathcal{S}^{\nu}_{\mu}=\mathcal{S}^{\nu}_{\mu}(t,\tau,\theta,x)$ being periodic of period 1 with respect to $\theta,$ solutions to
\begin{equation}\label{3.26}
    \frac{\partial\mathcal{S}^{\nu}}{\partial\theta}-\nabla\cdot((\widetilde{\mathcal{A}}_{\epsilon}
    (t,\tau,\cdot,\cdot)+\nu)\nabla\mathcal{S}^{\nu})=\nabla\cdot\widetilde{\mathcal{C}}_{\epsilon}(t,\tau,\cdot,\cdot),
\end{equation}
and
\begin{equation}\label{3.27}
    \mu\mathcal{S}^{\nu}_{\mu}+\frac{\partial\mathcal{S}^{\nu}_{\mu}}{\partial\theta}
    -\nabla\cdot((\widetilde{\mathcal{A}}_{\epsilon}(t,\tau,\cdot,\cdot)+\nu)\nabla\mathcal{S}^{\nu}_{\mu}))=
    \nabla\cdot\widetilde{\mathcal{C}}_{\epsilon}(t,\tau,\cdot,\cdot).
\end{equation}
In equations (\ref{3.26}) and (\ref{3.27}) $t$ and $\tau$ are only parameters.\\
The method to get the desired estimates which do not depend on $\epsilon$ is shared in several steps. In the first, we set out the existence 
of periodic solution  $\mathcal{S}^{\nu}_{\mu}$ to equation  (\ref{3.27}). We, moreover, set
 out that sequence $\mathcal{S}^{\nu}_{\mu}$ is bounded independently of $\mu$ and $\epsilon.$
We also show that $\mathcal{S}^{\nu}_{\mu}$ is differentiable with respect to $t$ and $\tau.$ In a second step, letting $\mu$ go to 0, we get existence  of $\mathcal{S}^{\nu},$  with the same properties as $\mathcal{S}^{\nu}_{\mu}.$ The third step consists in finding estimates on $\mathcal{S}^{\nu},\,\,\frac{\partial\mathcal{S}^{\nu}}{\partial t}$ and $\frac{\partial\mathcal{S}^{\nu}}{\partial \tau}$ which are independent of $\nu$ to be able to make the process $\nu \longrightarrow0$ and to obtain the existence of a solution $\mathcal{S}$ to (\ref{3.26}) with $\nu=0$ and consequently a periodic solution $\zep$ to an equation close to (\ref{3.5}) in a fourth step. The fifth step consists in noticing that the solution $z^{\epsilon}$ of (\ref{3.5}) is not far from $\zep$.

The framework of periodic solutions to parabolic equations is widely studied in both linear and nonlinear cases. We refer for instance to
Barles\cite{Barl},
Berestycki-Hamel and Roques \cite{HBFHLR,HBFHLR2},
Bostan \cite{Bostan}, Hansbo \cite{Hansbo}, Kono \cite{Kono}, Nadin \cite{NadinCras,Nadin}, Namah and Roquejoffre \cite{NaRoqj} and Pardoux \cite{Pard} for a revue of the results on this topic that our result (see theorem \ref{th3.5} and \ref{th3.12}) completes. Inspired by ideas that may be found in those references, concerning equation (\ref{3.27}) we can state the following theorem.
\begin{theorem}\label{th3.5}
Under assumptions (\ref{3.12}), (\ref{3.13}) and (\ref{3.15}), for any $\mu>0$ and any $\nu>0,$ there exists a unique $\mathcal{S}_{\mu}^{\nu}=\mathcal{S}_{\mu}^{\nu}(t,\tau,\theta,x)\in\mathcal{C}^{0}\cap L^{2}(\mathbb{R}\times\torus^{2})$, periodic of period 1 with respect to $\theta,$  solution to (\ref{3.27})
and regular with respect to the parameters $t$ and $\tau$. Moreover, there exists a constant $\gamma_{3}$, which depends only on $\gamma$ and $\nu$ such that
\begin{equation}\label{3.280}
    \sup_{\theta\in\mathbb{R}}\left|\int_{\torus^{2}}\mathcal{S}_{\mu}^{\nu}(\theta,x)dx\right|=0,
\end{equation}
\begin{equation}\label{3.281}
    \left\|\frac{\partial\mathcal{S}_{\mu}^{\nu}}{\partial \theta}\right\|_{L^{2}_{\#}(\mathbb{R}\times\torus^{2})}\leq\gamma_{3},
\end{equation}
\begin{equation}\label{3.282}
    \|\nabla\mathcal{S}_{\mu}^{\nu}\|_{L^{\infty}_{\#}(\mathbb{R},L^{2}(\torus^{2})}\leq\gamma_{3},
\end{equation}
\begin{equation}\label{3.283}
    \|\mathcal{S}_{\mu}^{\nu}\|_{L^{\infty}_{\#}(\mathbb{R},L^{2}(\torus^{2})}\leq\gamma_{3}.
\end{equation}
The following estimates with respect to the parameters $t$ and $\tau$ are also true
\begin{equation}\label{3.29}
    \left\|\frac{\partial\mathcal{S}_{\mu}^{\nu}}{\partial t}\right\|_{L^{\infty}_{\#}(\mathbb{R},L^{2}(\torus^{2}))}\leq\gamma_{3},\qquad \left\|\frac{\partial\mathcal{S}_{\mu}^{\nu}}{\partial \tau}\right\|_{L^{\infty}_{\#}(\mathbb{R},L^{2}(\torus^{2}))}\leq\sqrt{\epsilon}\gamma_{3}.
\end{equation}
\end{theorem}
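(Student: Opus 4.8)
\textbf{Proof plan for Theorem \ref{th3.5}.}
The plan is to construct $\mathcal{S}_\mu^\nu$ via a fixed-point / Galerkin argument on the torus $\torus^2$ with $\theta$ playing the role of a (periodic) time variable, treating $t$ and $\tau$ as inert parameters. First I would set up the functional framework: work in $L^2_\#(\R\times\torus^2)$ (periodic in $\theta$ of period $1$) with the natural energy space $L^2_\#(\R,H^1(\torus^2))$, and observe that for fixed $\nu>0$ the operator $-\nabla\cdot((\widetilde{\mathcal A}_\epsilon+\nu)\nabla\,\cdot)$ is uniformly elliptic with ellipticity constant $\nu$, so $\mu\,\mathrm{Id}+\partial_\theta-\nabla\cdot((\widetilde{\mathcal A}_\epsilon+\nu)\nabla\,\cdot)$ is coercive on the space of $1$-periodic-in-$\theta$ functions (the $\partial_\theta$ term contributes nothing to the energy after integration over a period, and the $\mu$-term together with Poincar\'e on $\torus^2$ controls the full $L^2_\#(\R,H^1)$ norm). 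Existence and uniqueness of $\mathcal{S}_\mu^\nu$ then follows from Lax--Milgram (or a Galerkin truncation in the $x$-variable combined with the classical theory of periodic solutions of linear parabolic equations, as in Lions \cite{J.L.L} or the references cited). The zero-average identity \eqref{3.280} is obtained by integrating \eqref{3.27} over $\torus^2$: the divergence terms vanish and one is left with $\mu\,m(\theta)+m'(\theta)=0$ where $m(\theta)=\int_{\torus^2}\mathcal{S}_\mu^\nu\,dx$; the unique $1$-periodic solution of this scalar ODE is $m\equiv0$.

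Next I would derive the $\theta$-uniform energy estimates \eqref{3.281}--\eqref{3.283}. Testing \eqref{3.27} against $\mathcal{S}_\mu^\nu$ and integrating over one period in $\theta$ kills $\int\partial_\theta\mathcal{S}_\mu^\nu\cdot\mathcal{S}_\mu^\nu$, giving $\mu\|\mathcal{S}_\mu^\nu\|^2_{L^2_\#}+\nu\|\nabla\mathcal{S}_\mu^\nu\|^2_{L^2_\#}\le\|\widetilde{\mathcal C}_\epsilon\|_{L^2}\|\nabla\mathcal{S}_\mu^\nu\|_{L^2_\#}$, hence $\|\nabla\mathcal{S}_\mu^\nu\|_{L^2_\#}\le\gamma/\nu$ by Young; Poincar\'e on $\torus^2$ (legitimate thanks to \eqref{3.280}) upgrades this to a bound on $\|\mathcal{S}_\mu^\nu\|_{L^2_\#}$. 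To get the $L^\infty_\#(\R,L^2)$ versions \eqref{3.282}--\eqref{3.283} rather than just $L^2_\#$ ones, I would test instead against $-\nabla\cdot(\nabla\mathcal{S}_\mu^\nu)$, i.e. differentiate in $x$: pairing \eqref{3.27} with $\partial_\theta$-derivative or with the Laplacian term, one produces $\tfrac{d}{d\theta}\|\nabla\mathcal{S}_\mu^\nu(\theta)\|^2_{L^2(\torus^2)}$ plus a good second-order term, controlled by $\|\nabla\widetilde{\mathcal A}_\epsilon\|_\infty$, $\|\nabla\cdot\widetilde{\mathcal C}_\epsilon\|_\infty$ from \eqref{3.12}; periodicity in $\theta$ then forces $\theta\mapsto\|\nabla\mathcal{S}_\mu^\nu(\theta)\|^2$ to attain its average, and a Gr\"onwall-type argument over a period yields the sup bound. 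The bound \eqref{3.281} on $\partial_\theta\mathcal{S}_\mu^\nu$ then follows by reading it off from the equation: $\partial_\theta\mathcal{S}_\mu^\nu=-\mu\mathcal{S}_\mu^\nu+\nabla\cdot((\widetilde{\mathcal A}_\epsilon+\nu)\nabla\mathcal{S}_\mu^\nu)+\nabla\cdot\widetilde{\mathcal C}_\epsilon$, whose right-hand side is bounded in $L^2_\#(\R\times\torus^2)$ using the estimates just obtained together with \eqref{3.12} and elliptic regularity on $\torus^2$.

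For the parameter regularity and the estimates \eqref{3.29}, I would formally differentiate \eqref{3.27} with respect to $t$ (resp. $\tau$): $\partial_t\mathcal{S}_\mu^\nu$ solves the same equation with right-hand side $\nabla\cdot(\partial_t\widetilde{\mathcal A}_\epsilon\,\nabla\mathcal{S}_\mu^\nu)+\nabla\cdot\partial_t\widetilde{\mathcal C}_\epsilon$, and since $\partial_t\widetilde{\mathcal A}_\epsilon$, $\partial_t(\nabla\widetilde{\mathcal A}_\epsilon)$, $\partial_t(\nabla\cdot\widetilde{\mathcal C}_\epsilon)$ are all bounded by $\gamma$ (assumption \eqref{3.12}) while $\nabla\mathcal{S}_\mu^\nu$ is already controlled, the same energy scheme as above gives $\|\partial_t\mathcal{S}_\mu^\nu\|_{L^\infty_\#(\R,L^2)}\le\gamma_3$; the $\tau$-version gains the extra $\sqrt\epsilon$ factor precisely because $\partial_\tau\widetilde{\mathcal A}_\epsilon$, $\partial_\tau\widetilde{\mathcal C}_\epsilon$, $\partial_\tau\nabla\widetilde{\mathcal A}_\epsilon$ carry a $\sqrt\epsilon$ by \eqref{3.13}. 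To make this rigorous (rather than formal) one runs the differentiation at the Galerkin level, where difference quotients in $t$ and $\tau$ are legitimate, obtains the bounds uniformly, and passes to the limit. I expect the main obstacle to be the bootstrapping needed for the $L^\infty_\#(\R,L^2)$ estimates on $\nabla\mathcal{S}_\mu^\nu$ and on $\partial_t\mathcal{S}_\mu^\nu$: one must be careful that the constants stay independent of $\mu$ (they are allowed to blow up as $\nu\to0$, since $\gamma_3$ depends on $\nu$, but not on $\mu$ or $\epsilon$), which requires choosing the test functions so that the $\mu$-term always enters with a favorable sign and is simply discarded, and it requires the $\theta$-periodicity to substitute for an initial condition at each stage of the argument.
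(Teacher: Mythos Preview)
Your plan is sound and reaches the same conclusions, but differs from the paper's argument in one structural point: existence. The paper does not use Lax--Milgram or Galerkin; instead, for fixed $\mu,\nu>0$ it solves the initial-value problem $\mu\xi+\partial_\theta\xi-\nabla\cdot((\widetilde{\mathcal A}_\epsilon+\nu)\nabla\xi)=\nabla\cdot\widetilde{\mathcal C}_\epsilon$, $\xi_{|\theta=0}=\xi_0$, and shows that the time-$1$ Poincar\'e map $\square:\xi_0\mapsto\xi(1,\cdot)$ is a strict contraction on $L^2(\torus^2)$ with factor $e^{-\mu}$ (Lemma~\ref{lem3.6}); the fixed point furnishes the periodic solution. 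This is marginally more elementary than invoking Lions' abstract theory of periodic parabolic problems, and it makes the role of $\mu>0$ completely transparent. Your variational route is equally legitimate but requires some care to formulate precisely, since the bilinear form containing $\partial_\theta$ is not bounded on $L^2_\#(\R,H^1)$; Galerkin (which you also mention) is the clean way around this.

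The energy estimates in your plan match the paper's Lemmas~\ref{lem3.7}--\ref{lem3.11} almost exactly: zero mean via the scalar ODE for $\int_{\torus^2}\mathcal S_\mu^\nu\,dx$, an $L^2_\#$ gradient bound by testing with $\mathcal S_\mu^\nu$, an $L^\infty_\#$ gradient bound by testing with $-\Delta\mathcal S_\mu^\nu$ followed by a mean-value/Gr\"onwall argument over a period, Poincar\'e from zero mean for (\ref{3.283}), and differentiation in $t,\tau$ for (\ref{3.29}). One small difference: for (\ref{3.281}) the paper tests (\ref{3.27}) against $\partial_\theta\mathcal S_\mu^\nu$ and integrates over a period, so that the term $\mu\frac{d}{d\theta}\|\mathcal S_\mu^\nu\|_2^2$ drops out and the bound is manifestly $\mu$-independent; your proposal to read $\partial_\theta\mathcal S_\mu^\nu$ off directly from the equation introduces a $\mu\mathcal S_\mu^\nu$ contribution that would need a separate argument to control uniformly for large $\mu$. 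This is harmless for the downstream application (where $\mu\to 0$), but the paper's choice of test function is slightly cleaner.
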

In the above theorem, the norms are defined by
\begin{gather}
\|f\|^{2}_{L^{2}_{\#}(\mathbb{R}\times\torus^{2})}=\int_{0}^{1}\int_{\torus^{2}}f^{2}\,dxd\theta,\\
\|f\|^{2}_{L^{\infty}_{\#}(\mathbb{R},L^{2}(\torus^{2}))}=\sup_{\theta\in[0,1]}\int_{\torus^{2}}f^{2}\,dx.
\end{gather}
\begin{proof}{{\bf of theorem \ref{th3.5}.}}
The point of departure to prove theorem \ref{th3.5}
consists in considering, for $\xi\in L^{2}(\torus^{2}),$ the solution $\xi_{\mu}^{\nu}$ to
\begin{equation}\label{3.30}\left\{\begin{array}{ccc}
    \ds\mu\xi_{\mu}^{\nu}+\frac{\partial\xi_{\mu}^{\nu}}{\partial\theta}-\nabla\cdot((\widetilde{\mathcal{A}}_{\epsilon}+\nu)\nabla\xi_{\mu}^{\nu})=\nabla\cdot\widetilde{\mathcal{C}}_{\epsilon},\\
\ds{\xi_{\mu}^{\nu}}_{|\theta=0}=\xi,
\end{array}\right.\end{equation}
whose existence and uniqueness on any finite interval is a direct consequence of Ladyzenskaja, Solonnikov and Ural'Ceva \cite{LSU} or Lions \cite{J.L.L}.
We also consider the application
\begin{equation}\label{3.31}
    \square:L^{2}(\torus^{2})\longrightarrow L^{2}(\torus^{2}),\quad
    \xi\longmapsto\xi_{\mu}^{\nu}(1,\cdot),
\end{equation}
and for it we can prove the following lemma.
\begin{lemma} \label{lem3.6}For any $\mu>0,\nu>0$ and $\epsilon>0,$ $\square$ is a strict contraction
\end{lemma}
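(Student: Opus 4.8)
The plan is to show that $\square$ is a strict contraction on $L^2(\torus^2)$ by estimating, for two initial data $\xi$ and $\hat\xi$, the difference $w = \xi_\mu^\nu - \hat\xi_\mu^\nu$ of the corresponding solutions to \eqref{3.30}. Since equation \eqref{3.30} is linear, $w$ solves the homogeneous problem
\begin{equation}\nonumber
\mu w + \frac{\partial w}{\partial\theta} - \nabla\cdot\big((\widetilde{\mathcal{A}}_\epsilon+\nu)\nabla w\big) = 0,\qquad w_{|\theta=0} = \xi - \hat\xi.
\end{equation}
First I would multiply this equation by $w$ and integrate over $\torus^2$. The transport term gives $\tfrac12\frac{d}{d\theta}\|w(\theta,\cdot)\|_2^2$, the diffusion term gives $\int_{\torus^2}(\widetilde{\mathcal{A}}_\epsilon+\nu)|\nabla w|^2\,dx \geq \nu\|\nabla w(\theta,\cdot)\|_2^2 \geq 0$ because $\widetilde{\mathcal{A}}_\epsilon\geq0$ by \eqref{3.15}, and the zeroth-order term gives $\mu\|w(\theta,\cdot)\|_2^2$. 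Dropping the nonnegative diffusion contribution yields the differential inequality
\begin{equation}\nonumber
\frac{d}{d\theta}\|w(\theta,\cdot)\|_2^2 + 2\mu\|w(\theta,\cdot)\|_2^2 \leq 0,
\end{equation}
hence $\|w(\theta,\cdot)\|_2^2 \leq e^{-2\mu\theta}\|\xi-\hat\xi\|_2^2$ by Gronwall's lemma.

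Evaluating at $\theta = 1$ then gives $\|\square\xi - \square\hat\xi\|_2 = \|w(1,\cdot)\|_2 \leq e^{-\mu}\|\xi-\hat\xi\|_2$, and since $e^{-\mu} < 1$ for every $\mu > 0$, the map $\square$ is a strict contraction with contraction constant $e^{-\mu}$, independently of $\nu$ and $\epsilon$. The bound in fact improves if one retains the diffusion term, but $e^{-\mu}$ already suffices.

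The only genuinely delicate point is making the energy identity rigorous: one must justify that $\theta\mapsto\|w(\theta,\cdot)\|_2^2$ is absolutely continuous and that the integration by parts in the diffusion term is legitimate, i.e. that $w$ has enough regularity in $x$ (namely $w(\theta,\cdot)\in H^1(\torus^2)$ for a.e. $\theta$ with $\partial_\theta w \in L^2$) so that the standard parabolic energy computation applies. This is exactly the regularity supplied by the existence theory of \cite{LSU} or \cite{J.L.L} invoked just after \eqref{3.30}, so the argument is really a routine application of the parabolic energy method once that regularity is in hand; the sign condition $\widetilde{\mathcal{A}}_\epsilon \geq 0$ together with the artificial viscosity $\nu > 0$ is what guarantees the diffusion term does not spoil the dissipation, and the zeroth-order coefficient $\mu > 0$ is what produces the strict gain.
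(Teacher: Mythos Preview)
Your proof is correct and follows essentially the same approach as the paper: form the difference of two solutions, multiply by it, integrate over $\torus^2$, drop the nonnegative diffusion term, and apply Gronwall to obtain the contraction factor $e^{-\mu}$. Your additional remarks on the regularity needed to justify the energy identity are a welcome clarification that the paper leaves implicit.
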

\begin{proof}{\bf of lemma \ref{lem3.6}.}
For any $\xi\in L^{2}(\torus^{2})$ and any $\widetilde{\xi}\in L^{2}(\torus^{2}),$ we consider $\xi_{\mu}^{\nu}$ and $\widetilde{\xi}_{\mu}^{\nu}$  the solutions of (\ref{3.30}) associated with initial conditions $\xi$ and $\widetilde{\xi}.$ It is obvious to obtain that $\xi_{\mu}^{\nu}-\widetilde{\xi}_{\mu}^{\nu}$ is solution to
\begin{equation}\label{3.32}
    \mu(\xi_{\mu}^{\nu}-\widetilde{\xi}_{\mu}^{\nu})+\frac{\partial(\xi_{\mu}^{\nu}-\widetilde{\xi}_{\mu}^{\nu})}{\partial\theta}-\nabla\cdot\left((\widetilde{\mathcal{A}}_{\epsilon}+\nu)\nabla(\xi_{\mu}^{\nu}-\widetilde{\xi}_{\mu}^{\nu})\right)=0,
\end{equation}
which multiplied by $\xi_{\mu}^{\nu}-\widetilde{\xi}_{\mu}^{\nu}$  and integrated on $\torus^{2}$ yields.
\begin{equation}\label{3.33}
    \mu\|\xi_{\mu}^{\nu}-\widetilde{\xi}_{\mu}^{\nu}\|_{2}^{2}
    + \frac12 \frac{\partial\left(\|\xi_{\mu}^{\nu}-\widetilde{\xi}_{\mu}^{\nu} \|_{2}^{2}\right)}{\partial\theta}
    +\int_{\torus^{2}}(\widetilde{\mathcal{A}}_{\epsilon}+\nu)|\nabla(\xi_{\mu}^{\nu}-\widetilde{\xi}_{\mu}^{\nu}) |^{2}dx=0.
\end{equation}
Since $\widetilde{\mathcal{A}}_{\epsilon}+\nu>0,$ we can deduce that
\begin{equation}\label{3.34}
    \|\xi_{\mu}^{\nu}(1,\cdot)-\widetilde{\xi}_{\mu}^{\nu}(1,\cdot)\|_{2}^{2}\leq e^{-2\mu}\|\xi-\widetilde{\xi}\|_{2}^{2},
\end{equation}
giving the lemma.
\end{proof}\\
From lemma \ref{lem3.6} we deduce that there exists a unique function $\zeta\in L^{2}(\torus^{2})$ such that $\square(\zeta)=\zeta.$ Since $\widetilde{\mathcal{A}}_{\epsilon}$ and $\widetilde{\mathcal{C}}_{\epsilon}$ are periodic with respect to $\theta$ we deduce that the sought periodic solution $\mathcal{S}_{\mu}^{\nu}$ of (\ref{3.27}) is nothing but the solution of (\ref{3.30}) associated with initial condition $\zeta $
such that $\square(\zeta)= \zeta$. Hence we proved the existence of  $\mathcal{S}_{\mu}^{\nu}$ claimed in theorem \ref{th3.5}.
The fact is that $\mathcal{S}_{\mu}^{\nu}$ is continuous comes from the fact that it is a solution of a parabolic equation with regular coefficients. We now turn to the properties of the function $\mathcal{S}_{\mu}^{\nu}.$
\begin{lemma}\label{lem3.7}For any $\mu>0,\,\,\nu>0$ and $\epsilon>0$ the solution to (\ref{3.27}) satisfies property (\ref{3.280}).
\end{lemma}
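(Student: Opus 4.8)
Lemma \ref{lem3.7} asserts that the periodic solution $\mathcal{S}_{\mu}^{\nu}$ of (\ref{3.27}) has spatial mean identically zero in $\theta$, i.e. $\int_{\torus^{2}}\mathcal{S}_{\mu}^{\nu}(\theta,x)\,dx=0$ for all $\theta$. The plan is to integrate equation (\ref{3.27}) over $\torus^{2}$ and exploit the divergence structure of the right-hand side together with the periodicity in $\theta$ that has just been established.

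First I would set $m(\theta)=\int_{\torus^{2}}\mathcal{S}_{\mu}^{\nu}(t,\tau,\theta,x)\,dx$ (with $t,\tau$ frozen as parameters). Integrating (\ref{3.27}) over the torus $\torus^{2}$, the two divergence terms $\nabla\cdot\big((\widetilde{\mathcal{A}}_{\epsilon}+\nu)\nabla\mathcal{S}_{\mu}^{\nu}\big)$ and $\nabla\cdot\widetilde{\mathcal{C}}_{\epsilon}$ integrate to zero because $\torus^{2}$ has no boundary. This leaves the scalar linear ODE $\mu\, m(\theta)+m'(\theta)=0$, whose only $1$-periodic solution is $m\equiv 0$: indeed $m(\theta)=m(0)e^{-\mu\theta}$ and periodicity forces $m(0)=m(0)e^{-\mu}$, hence $m(0)=0$ since $\mu>0$. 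This gives (\ref{3.280}).

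There is essentially no serious obstacle here; the only point requiring a word of care is justifying that one may interchange $\partial_\theta$ with $\int_{\torus^{2}}$ and that $\nabla\cdot((\widetilde{\mathcal{A}}_{\epsilon}+\nu)\nabla\mathcal{S}_{\mu}^{\nu})$ genuinely integrates to zero. Both are legitimate because $\mathcal{S}_{\mu}^{\nu}$ is, by the construction just completed, a solution of a uniformly parabolic equation with smooth coefficients (the ellipticity constant being $\nu>0$), hence smooth enough in $x$ and continuously differentiable in $\theta$ for the divergence theorem on $\torus^{2}$ and differentiation under the integral sign to apply. I would simply remark that these regularity facts follow from standard parabolic theory (Ladyzenskaja, Solonnikov and Ural'Ceva \cite{LSU} or Lions \cite{J.L.L}), which was already invoked to produce $\mathcal{S}_{\mu}^{\nu}$, and conclude.
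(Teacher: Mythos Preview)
Your proof is correct and follows essentially the same route as the paper: integrate (\ref{3.27}) over $\torus^{2}$ so that the divergence terms vanish, obtain the ODE $\mu m+m'=0$ for $m(\theta)=\int_{\torus^{2}}\mathcal{S}_{\mu}^{\nu}\,dx$, and conclude $m\equiv 0$ from periodicity. Your added remark on the regularity needed to justify the integration is a harmless elaboration of what the paper leaves implicit.
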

\begin{proof}{\bf of lemma \ref{lem3.7}.} Integrating (\ref{3.27}) over $\torus^{2}$ gives
\begin{equation}\label{3.36}
    \mu\int_{\torus^{2}}\mathcal{S}_{\mu}^{\nu}dx+\frac{\ds d\left(\int_{\torus^{2}}\mathcal{S}_{\mu}^{\nu}dx\right)}{d\theta}\,\,=0,
\end{equation}
which leads,
\begin{equation}\label{3.37}
   \int_{\torus^{2}}\mathcal{S}_{\mu}^{\nu}(\theta,x)dx=\int_{\torus^{2}}\mathcal{S}_{\mu}^{\nu}(\tilde{\theta},x)dx \; e^{-\mu(\theta-\tilde{\theta})}.
\end{equation}
Since $\mathcal{S}_{\mu}^{\nu}$ is periodic of period 1 with respect to $\theta$, $\int_{\torus^{2}}\mathcal{S}_{\mu}^{\nu}dx$ is also periodic of period 1. Hence the only possibility to satisfy (\ref{3.37}) for $\int_{\torus^{2}}\mathcal{S}_{\mu}^{\nu}dx$ is to be 0.
This ends the proof of lemma \ref{lem3.7}
\end{proof}
\begin{lemma}\label{lem3.8}
For any $\mu>0,\,\,\nu>0$ and $\epsilon>0$ estimate
\begin{equation}\label{3.371}
\|\nabla\mathcal{S}^{\nu}_{\mu}\|_{L^{2}_{\#}(\mathbb{R}\times\torus^{2})}\leq\frac{\gamma}{\nu},\end{equation}
and estimate (\ref{3.281}) are valid.
\end{lemma}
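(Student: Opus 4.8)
The plan is to establish both bounds by energy estimates for equation (\ref{3.27}), testing it against suitable multipliers and integrating over one period of $\theta$, i.e.\ over $\torus^{2}\times[0,1]$. The ingredients are elementary: $\widetilde{\mathcal{A}}_{\epsilon}\geq 0$, the uniform bounds $|\widetilde{\mathcal{A}}_{\epsilon}|,\,|\partial_{\theta}\widetilde{\mathcal{A}}_{\epsilon}|,\,|\widetilde{\mathcal{C}}_{\epsilon}|,\,|\nabla\cdot\widetilde{\mathcal{C}}_{\epsilon}|\leq\gamma$ from (\ref{3.12}), the $1$-periodicity in $\theta$ of $\mathcal{S}_{\mu}^{\nu}$ and of $\nabla\mathcal{S}_{\mu}^{\nu}$ granted by theorem \ref{th3.5} (so that $\int_{0}^{1}\partial_{\theta}(\text{periodic})\,d\theta=0$), and the fact that $\torus^{2}\times[0,1]$ has measure $1$, so that pointwise bounds by $\gamma$ yield $L^{2}_{\#}$ bounds by $\gamma$.

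For (\ref{3.371}): multiply (\ref{3.27}) by $\mathcal{S}_{\mu}^{\nu}$, integrate over $\torus^{2}\times[0,1]$ and integrate the second-order term by parts in $x$ (no boundary term on $\torus^{2}$). The term $\int_{0}^{1}\int_{\torus^{2}}\mathcal{S}_{\mu}^{\nu}\,\partial_{\theta}\mathcal{S}_{\mu}^{\nu}=\frac12\int_{0}^{1}\int_{\torus^{2}}\partial_{\theta}\big((\mathcal{S}_{\mu}^{\nu})^{2}\big)$ vanishes by periodicity, and $\int_{\torus^{2}}\nabla\cdot\widetilde{\mathcal{C}}_{\epsilon}\,\mathcal{S}_{\mu}^{\nu}=-\int_{\torus^{2}}\widetilde{\mathcal{C}}_{\epsilon}\cdot\nabla\mathcal{S}_{\mu}^{\nu}$, leaving
\[\mu\|\mathcal{S}_{\mu}^{\nu}\|^{2}_{L^{2}_{\#}(\mathbb{R}\times\torus^{2})}+\int_{0}^{1}\int_{\torus^{2}}(\widetilde{\mathcal{A}}_{\epsilon}+\nu)|\nabla\mathcal{S}_{\mu}^{\nu}|^{2}\,dx\,d\theta=-\int_{0}^{1}\int_{\torus^{2}}\widetilde{\mathcal{C}}_{\epsilon}\cdot\nabla\mathcal{S}_{\mu}^{\nu}\,dx\,d\theta.\]
Since $\mu\|\mathcal{S}_{\mu}^{\nu}\|^{2}\geq0$ and $\widetilde{\mathcal{A}}_{\epsilon}\geq0$, the left side is $\geq\nu\|\nabla\mathcal{S}_{\mu}^{\nu}\|^{2}_{L^{2}_{\#}(\mathbb{R}\times\torus^{2})}$, while by Cauchy--Schwarz the right side is $\leq\|\widetilde{\mathcal{C}}_{\epsilon}\|_{L^{2}_{\#}(\mathbb{R}\times\torus^{2})}\,\|\nabla\mathcal{S}_{\mu}^{\nu}\|_{L^{2}_{\#}(\mathbb{R}\times\torus^{2})}\leq\gamma\,\|\nabla\mathcal{S}_{\mu}^{\nu}\|_{L^{2}_{\#}(\mathbb{R}\times\torus^{2})}$; dividing gives $\|\nabla\mathcal{S}_{\mu}^{\nu}\|_{L^{2}_{\#}(\mathbb{R}\times\torus^{2})}\leq\gamma/\nu$, which is (\ref{3.371}).

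For (\ref{3.281}): multiply (\ref{3.27}) by $\partial_{\theta}\mathcal{S}_{\mu}^{\nu}$ and integrate over $\torus^{2}\times[0,1]$. The $\mu$-term integrates to $0$ by periodicity; the elliptic term, after integration by parts in $x$, equals $\frac12\int(\widetilde{\mathcal{A}}_{\epsilon}+\nu)\,\partial_{\theta}|\nabla\mathcal{S}_{\mu}^{\nu}|^{2}$, and one further integration by parts in $\theta$ (the boundary contribution vanishing by periodicity of $\widetilde{\mathcal{A}}_{\epsilon}$ and of $\nabla\mathcal{S}_{\mu}^{\nu}$) turns it into $-\frac12\int\partial_{\theta}\widetilde{\mathcal{A}}_{\epsilon}\,|\nabla\mathcal{S}_{\mu}^{\nu}|^{2}$. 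One is left with
\[\Big\|\frac{\partial\mathcal{S}_{\mu}^{\nu}}{\partial\theta}\Big\|^{2}_{L^{2}_{\#}(\mathbb{R}\times\torus^{2})}=\frac12\int_{0}^{1}\int_{\torus^{2}}\frac{\partial\widetilde{\mathcal{A}}_{\epsilon}}{\partial\theta}\,|\nabla\mathcal{S}_{\mu}^{\nu}|^{2}\,dx\,d\theta+\int_{0}^{1}\int_{\torus^{2}}\nabla\cdot\widetilde{\mathcal{C}}_{\epsilon}\,\frac{\partial\mathcal{S}_{\mu}^{\nu}}{\partial\theta}\,dx\,d\theta.\]
Bounding the first right-hand term by $\frac{\gamma}{2}\|\nabla\mathcal{S}_{\mu}^{\nu}\|^{2}_{L^{2}_{\#}(\mathbb{R}\times\torus^{2})}\leq\frac{\gamma^{3}}{2\nu^{2}}$ using $|\partial_{\theta}\widetilde{\mathcal{A}}_{\epsilon}|\leq\gamma$ and (\ref{3.371}), and the second by Young's inequality by $\frac12\gamma^{2}+\frac12\|\partial_{\theta}\mathcal{S}_{\mu}^{\nu}\|^{2}_{L^{2}_{\#}(\mathbb{R}\times\torus^{2})}$ using $|\nabla\cdot\widetilde{\mathcal{C}}_{\epsilon}|\leq\gamma$, then absorbing the last term into the left-hand side, gives $\|\partial_{\theta}\mathcal{S}_{\mu}^{\nu}\|^{2}_{L^{2}_{\#}(\mathbb{R}\times\torus^{2})}\leq\gamma^{3}/\nu^{2}+\gamma^{2}$, i.e.\ (\ref{3.281}) with a constant $\gamma_{3}$ depending only on $\gamma$ and $\nu$.

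The one point requiring care is the legitimacy of using $\partial_{\theta}\mathcal{S}_{\mu}^{\nu}$ as a test function and of the successive integrations by parts in $x$ and $\theta$, since a priori $\mathcal{S}_{\mu}^{\nu}$ is only known to be continuous and regular in the parameters $t,\tau$. I would deal with this in the standard way: carry out the computations on the solution of the Cauchy problem (\ref{3.30}) on an interval $[0,n]$, whose coefficients are smooth and whose principal part $\widetilde{\mathcal{A}}_{\epsilon}+\nu$ is uniformly elliptic (so that parabolic regularity provides the needed smoothness in the ``time'' variable $\theta$), and then transfer the estimates to the periodic solution $\mathcal{S}_{\mu}^{\nu}$ via the fixed-point construction used in the proof of theorem \ref{th3.5}; alternatively, one replaces $\partial_{\theta}\mathcal{S}_{\mu}^{\nu}$ by difference quotients in $\theta$ and lets the step size tend to $0$, periodicity again killing the boundary terms. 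This is the main, though only mildly technical, obstacle; everything else is routine.
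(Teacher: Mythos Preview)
Your proof is correct and follows essentially the same route as the paper: test (\ref{3.27}) against $\mathcal{S}_{\mu}^{\nu}$ for (\ref{3.371}) and against $\partial_{\theta}\mathcal{S}_{\mu}^{\nu}$ for (\ref{3.281}), integrate over one period, and use periodicity to kill the boundary terms. The only cosmetic difference is in the treatment of the source term in the second estimate: the paper integrates by parts once more in $\theta$ (and in $x$) to rewrite $\int\nabla\cdot\widetilde{\mathcal{C}}_{\epsilon}\,\partial_{\theta}\mathcal{S}_{\mu}^{\nu}$ as $\int\partial_{\theta}\widetilde{\mathcal{C}}_{\epsilon}\cdot\nabla\mathcal{S}_{\mu}^{\nu}$ and bounds it directly by $\gamma\|\nabla\mathcal{S}_{\mu}^{\nu}\|_{L^{2}_{\#}}$, whereas you keep it as is and absorb via Young's inequality; both yield a bound depending only on $\gamma$ and $\nu$.
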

\begin{proof}{\bf of lemma \ref{lem3.8}.}
Multiplying equation (\ref{3.27}) by $\mathcal{S}_{\mu}^{\nu}$ and integrating on $\torus^{2}$ give
\begin{equation}\label{3.38} \begin{array}{rr}
    \ds\mu\|\mathcal{S}^{\nu}_{\mu}(\theta,\cdot)\|_{2}^{2}+\frac{1}{2}\frac{d(\|\mathcal{S}^{\nu}_{\mu}(\theta,\cdot)\|_{2}^{2})}{d\theta}+\int_{\torus^{2}}
    (\widetilde{\mathcal{A}}_{\epsilon}+\nu)|\nabla\mathcal{S}^{\nu}_{\mu}(\theta,\cdot)|^{2}dx~~~~~\\
   \ds =    \int_{\torus^{2}}
    \nabla\cdot\widetilde{\mathcal{C}}_{\epsilon}\mathcal{S}^{\nu}_{\mu}(\theta,\cdot)\,dx
    =-\int_{\torus^{2}}\widetilde{\mathcal{C}}_{\epsilon}\nabla\mathcal{S}^{\nu}_{\mu}(\theta,\cdot)\,dx.\end{array}
    \end{equation}
Integrating (\ref{3.38}), from 0 to 1, with respect to $\theta$ gives
 \begin{equation}\label{3.39}\begin{array}{rr}
    \ds\mu\|\mathcal{S}^{\nu}_{\mu}\|^2_{L^{2}_{\#}(\mathbb{R}\times\torus^{2})}+
    \int_{0}^{1}\int_{\torus^{2}}(\widetilde{\mathcal{A}}_{\epsilon}+\nu)|\nabla\mathcal{S}^{\nu}_{\mu}|^{2}\,dxd\theta~~~~~~~~~~~\\
    \ds=-\int_{0}^{1}\int_{\torus^{2}}\widetilde{\mathcal{C}}_{\epsilon}\nabla\mathcal{S}^{\nu}_{\mu}(\theta,\cdot)\,dx
    \leq\gamma\|\nabla\mathcal{S}^{\nu}_{\mu}\|_{L^{2}_{\#}(\mathbb{R}\times\torus^{2})}.\end{array}
 \end{equation}
 Since $\widetilde{\mathcal{A}}_{\epsilon}+\nu\geq\nu,$ we deduce from (\ref{3.39})
 \begin{equation}\label{3.40}
   \nu \|\nabla\mathcal{S}^{\nu}_{\mu}\|^{2}_{L^{2}_{\#}(\mathbb{R}\times\torus^{2})}\leq\gamma\|\nabla\mathcal{S}^{\nu}_{\mu}\|_{L^{2}_{\#}(\mathbb{R}\times\torus^{2})},
 \end{equation}
 giving (\ref{3.371}).

 Multiplying (\ref{3.27}) by $\frac{\partial\mathcal{S}_{\mu}^{\nu}}{\partial\theta}$ and integrating on $\torus^{2}$ give
 \begin{equation}\label{3.41}
    \mu\frac{d\|\mathcal{S}_{\mu}^{\nu}\|_{2}^{2}}{d\theta}+
    \left\|\frac{\partial\mathcal{S}_{\mu}^{\nu}}{\partial\theta}\right\|_{2}^{2}+\int_{\torus^{2}}(\widetilde{\mathcal{A}}_{\epsilon}+\nu)
    \nabla\mathcal{S}^{\nu}_{\mu}\cdot\nabla\frac{\partial\mathcal{S}_{\mu}^{\nu}}{\partial\theta}\,dx=\int_{\torus^{2}}\nabla\cdot\widetilde{\mathcal{C}}_{\epsilon}\frac{\partial\mathcal{S}_{\mu}^{\nu}}{\partial\theta}\,dx
 \end{equation}
 Since
 \begin{equation}\label{3.42}
 \frac{\ds d\left(\int_{\torus^{2}}(\widetilde{\mathcal{A}}_{\epsilon}+\nu)|\nabla\mathcal{S}^{\nu}_{\mu}|^{2}dx\right)}{d\theta}=\\
 \int_{\torus^{2}}\frac{\partial\widetilde{\mathcal{A}}_{\epsilon}}{\partial\theta}|\nabla\mathcal{S}_{\mu}^{\nu}|^{2}dx+
 2\int_{\torus^{2}}
 (\widetilde{\mathcal{A}}_{\epsilon}+\nu)\nabla\mathcal{S}_{\mu}^{\nu}\cdot\nabla\frac{\partial\mathcal{S}_{\mu}^{\nu}}{\partial\theta}\,dx,
 \end{equation}
 integrating (\ref{3.41}) with respect to $\theta$ from 0 to 1
gives
\begin{equation}\label{3.43}\begin{array}{r}
    \ds\left\|\frac{\partial\mathcal{S}_{\mu}^{\nu}}{\partial\theta} \right\|^{2}_{L^{2}_{\#}(\mathbb{R}\times\torus^{2})}=\int_{0}^{1}\int_{\torus^{2}}\frac{\partial\widetilde{\mathcal{A}}_{\epsilon}}{\partial\theta}|\nabla\mathcal{S}_{\mu}^{\nu}|^{2}dxd\theta+\int_{0}^{1}\int_{\torus^{2}}\frac{\partial\widetilde{\mathcal{C}}_{\epsilon}}{\partial\theta}\nabla\mathcal{S}_{\mu}^{\nu}dxd\theta~~~~~~~~~\\
    \ds\leq\gamma(\|\nabla\mathcal{S}_{\mu}^{\nu}\|^{2}_{L^{2}_{\#}(\mathbb{R}\times\torus^{2})}+\|\nabla\mathcal{S}_{\mu}^{\nu}\|_{L^{2}_{\#}(\mathbb{R}\times\torus^{2})}).
\end{array}\end{equation}
Because of (\ref{3.371}), we can deduce that (\ref{3.281}) is also true, ending the proof of lemma \ref{lem3.8}
\end{proof}
\begin{lemma}\label{lem3.9}
For any $\mu>0,\,\,\nu>0$ and $\epsilon>0,$ the solution to (\ref{3.27}) satisfies
\begin{equation}\label{3.44}
    \|\Delta\mathcal{S}_{\mu}^{\nu}\|^{2}_{L^{2}_{\#}(\mathbb{R}\times\torus^{2})}\leq2\frac{\gamma^{2}}{\nu^{2}}(\frac{\gamma}{\nu}+1),
\end{equation}
and estimate (\ref{3.282}).
\end{lemma}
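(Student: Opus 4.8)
Here the plan is to run an $H^{2}$ energy estimate on the regularized cell problem (\ref{3.27}). The decisive structural fact is that adding $\nu$ makes that equation uniformly parabolic, so $\mathcal{S}_{\mu}^{\nu}$ enjoys, for a.e.\ $\theta$, full $H^{2}$ regularity in $x$ together with $\partial_{\theta}\mathcal{S}_{\mu}^{\nu}\in L^{2}$ — exactly what is needed to test with $-\Delta\mathcal{S}_{\mu}^{\nu}$. First I would expand the diffusion term as $\nabla\cdot((\widetilde{\mathcal{A}}_{\epsilon}+\nu)\nabla\mathcal{S}_{\mu}^{\nu})=(\widetilde{\mathcal{A}}_{\epsilon}+\nu)\Delta\mathcal{S}_{\mu}^{\nu}+\nabla\widetilde{\mathcal{A}}_{\epsilon}\cdot\nabla\mathcal{S}_{\mu}^{\nu}$, multiply (\ref{3.27}) by $-\Delta\mathcal{S}_{\mu}^{\nu}$ and integrate over $\torus^{2}$. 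Since there are no boundary terms on the torus, this produces, for each $\theta$,
\begin{align*}
\mu\|\nabla\mathcal{S}_{\mu}^{\nu}\|_{2}^{2}+\tfrac{1}{2}\tfrac{d}{d\theta}\|\nabla\mathcal{S}_{\mu}^{\nu}\|_{2}^{2}+\int_{\torus^{2}}(\widetilde{\mathcal{A}}_{\epsilon}+\nu)|\Delta\mathcal{S}_{\mu}^{\nu}|^{2}\,dx & = -\int_{\torus^{2}}(\nabla\widetilde{\mathcal{A}}_{\epsilon}\cdot\nabla\mathcal{S}_{\mu}^{\nu})\,\Delta\mathcal{S}_{\mu}^{\nu}\,dx \\ & \quad -\int_{\torus^{2}}(\nabla\cdot\widetilde{\mathcal{C}}_{\epsilon})\,\Delta\mathcal{S}_{\mu}^{\nu}\,dx .
\end{align*}

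To obtain (\ref{3.44}) I would integrate this identity over $\theta\in[0,1]$: the total $\theta$-derivative vanishes by the periodicity in (\ref{3.15}), the term $\mu\|\nabla\mathcal{S}_{\mu}^{\nu}\|_{2}^{2}\geq0$ is simply discarded, and coercivity $\widetilde{\mathcal{A}}_{\epsilon}+\nu\geq\nu$ leaves $\nu\|\Delta\mathcal{S}_{\mu}^{\nu}\|_{L^{2}_{\#}(\mathbb{R}\times\torus^{2})}^{2}$ on the left. On the right I would use the pointwise bounds $|\nabla\widetilde{\mathcal{A}}_{\epsilon}|\leq\gamma$ and $|\nabla\cdot\widetilde{\mathcal{C}}_{\epsilon}|\leq\gamma$ from (\ref{3.12}), Cauchy--Schwarz (and $|\torus^{2}|=1$), and a Young splitting absorbing $\tfrac{\nu}{2}\|\Delta\mathcal{S}_{\mu}^{\nu}\|_{L^{2}_{\#}}^{2}$ into the left, which yields $\tfrac{\nu}{2}\|\Delta\mathcal{S}_{\mu}^{\nu}\|_{L^{2}_{\#}}^{2}\leq\tfrac{\gamma^{2}}{\nu}\big(\|\nabla\mathcal{S}_{\mu}^{\nu}\|_{L^{2}_{\#}}^{2}+1\big)$. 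Inserting the bound $\|\nabla\mathcal{S}_{\mu}^{\nu}\|_{L^{2}_{\#}}\leq\gamma/\nu$ of (\ref{3.371}) then gives an estimate of the form (\ref{3.44}) (with a constant depending only on $\gamma$ and $\nu$).

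For (\ref{3.282}) I would go back to the non-integrated identity and observe that, since $\mu\|\nabla\mathcal{S}_{\mu}^{\nu}\|_{2}^{2}$ and $\int_{\torus^{2}}(\widetilde{\mathcal{A}}_{\epsilon}+\nu)|\Delta\mathcal{S}_{\mu}^{\nu}|^{2}$ are both nonnegative, the function $f(\theta):=\|\nabla\mathcal{S}_{\mu}^{\nu}(\theta,\cdot)\|_{2}^{2}$ satisfies $f'(\theta)\leq2\,r(\theta)$ where $r(\theta):=\gamma\|\nabla\mathcal{S}_{\mu}^{\nu}(\theta,\cdot)\|_{2}\|\Delta\mathcal{S}_{\mu}^{\nu}(\theta,\cdot)\|_{2}+\gamma\|\Delta\mathcal{S}_{\mu}^{\nu}(\theta,\cdot)\|_{2}$. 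Since $f$ is $1$-periodic and absolutely continuous, $\int_{0}^{1}f'=0$, hence $\int_{0}^{1}|f'|=2\int_{0}^{1}(f')^{+}\leq4\int_{0}^{1}r\leq4\gamma\big(\|\nabla\mathcal{S}_{\mu}^{\nu}\|_{L^{2}_{\#}}\|\Delta\mathcal{S}_{\mu}^{\nu}\|_{L^{2}_{\#}}+\|\Delta\mathcal{S}_{\mu}^{\nu}\|_{L^{2}_{\#}}\big)$, which is finite by (\ref{3.371}) and (\ref{3.44}). As $f$ is continuous on $[0,1]$ it attains at some $\theta_{0}$ a value $f(\theta_{0})\leq\int_{0}^{1}f=\|\nabla\mathcal{S}_{\mu}^{\nu}\|_{L^{2}_{\#}}^{2}\leq\gamma^{2}/\nu^{2}$, so $f(\theta)\leq f(\theta_{0})+\int_{0}^{1}|f'|$ for every $\theta$, a bound depending only on $\gamma$ and $\nu$; taking the supremum over $\theta\in[0,1]$ gives (\ref{3.282}).

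The step I expect to be the main obstacle is not any of these computations but the rigorous justification that $-\Delta\mathcal{S}_{\mu}^{\nu}$ is an admissible test function, i.e.\ the $H^{2}$-in-$x$ regularity with $\partial_{\theta}\mathcal{S}_{\mu}^{\nu}\in L^{2}$. The cleanest way around it is to perform the whole estimate first on a Galerkin (or $\theta$-discretized) approximation of (\ref{3.27}) and pass to the limit, or to invoke elliptic regularity $\theta$-wise for $-\nabla\cdot((\widetilde{\mathcal{A}}_{\epsilon}+\nu)\nabla\mathcal{S}_{\mu}^{\nu})=\nabla\cdot\widetilde{\mathcal{C}}_{\epsilon}-\mu\mathcal{S}_{\mu}^{\nu}-\partial_{\theta}\mathcal{S}_{\mu}^{\nu}$ and bootstrap. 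A secondary point is bookkeeping: every constant must end up depending on $\gamma$ and $\nu$ only, which is precisely why the $\mu$-proportional terms, all of favourable sign, are thrown away at each stage.
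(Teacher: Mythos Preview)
Your argument is correct and follows essentially the same route as the paper: multiply (\ref{3.27}) by $-\Delta\mathcal{S}_{\mu}^{\nu}$, expand the divergence, absorb the right-hand side via a weighted Young inequality, integrate in $\theta$ over a period to obtain (\ref{3.44}), and then upgrade the $L^{2}_{\#}$ gradient bound to $L^{\infty}_{\#}$ by choosing a good $\theta_{0}$ and integrating the resulting differential inequality. The only cosmetic difference is in this last step: the paper keeps the differential inequality in the form $\tfrac{d}{d\theta}\|\nabla\mathcal{S}_{\mu}^{\nu}\|_{2}^{2}\leq\tfrac{2\gamma^{2}}{\nu}(\|\nabla\mathcal{S}_{\mu}^{\nu}\|_{2}^{2}+1)$ and integrates it directly, whereas you bound $f'$ through $\|\Delta\mathcal{S}_{\mu}^{\nu}\|_{2}$ and invoke the just-proved (\ref{3.44}); both routes are equally valid and yield constants depending only on $\gamma$ and $\nu$.
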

\begin{proof}{\bf of  lemma \ref{lem3.9}.} Multiplying (\ref{3.27}) by $-\Delta\mathcal{S}_{\mu}^{\nu}$ and integrating with respect to $x\in\torus^{2}$ yields,
\begin{equation}\label{3.45}
    \mu\|\nabla\mathcal{S}_{\mu}^{\nu}\|_{2}^{2}+\frac{1}{2}\frac{d\left(\|\nabla\mathcal{S}_{\mu}^{\nu}\|_{2}^{2}\right)}{d\theta}
    +\int_{\torus^{2}}\nabla\cdot((\widetilde{\mathcal{A}}_{\epsilon}+\nu)
    \nabla\mathcal{S}_{\mu}^{\nu})\,\Delta\mathcal{S}_{\mu}^{\nu}dx=
    -\int_{\torus^{2}}\nabla\cdot\widetilde{\mathcal{C}}_{\epsilon}\Delta\mathcal{S}_{\mu}^{\nu}dx,
\end{equation}
or
\begin{equation}\label{3.46}
    \ds\mu\|\nabla\mathcal{S}_{\mu}^{\nu}\|_{2}^{2}+\frac{1}{2}\frac{d\left(\|\nabla\mathcal{S}_{\mu}^{\nu}\|_{2}^{2}\right)}{d\theta}
    +\int_{\torus^{2}}(\widetilde{\mathcal{A}}_{\epsilon}+\nu)|\Delta\mathcal{S}_{\mu}^{\nu}|^{2}dx=
    -\ds\int_{\torus^{2}}\nabla\widetilde{\mathcal{A}}_{\epsilon}\cdot\nabla\mathcal{S}_{\mu}^{\nu}\Delta\mathcal{S}_{\mu}^{\nu}dx - \int_{\torus^{2}}\nabla\cdot\widetilde{\mathcal{C}}_{\epsilon}\,\Delta\mathcal{S}_{\mu}^{\nu}dx.
\end{equation}
Since for any real number $U$ and $V,$
\begin{equation}\label{3.47}
    |UV|\leq\frac{\widetilde{\mathcal{A}}_{\epsilon}+\nu}{4}U^{2}+\frac{1}{\widetilde{\mathcal{A}}_{\epsilon}+\nu}V^{2},
\end{equation}
using this formula with $U=\Delta\mathcal{S}_{\mu}^{\nu}$ and 
$V=\nabla\widetilde{\mathcal{A}}_{\epsilon}\cdot\nabla\mathcal{S}_{\mu}^{\nu}$ we get
\begin{equation}\label{3.48}
   \left |\int_{\torus^{2}}\nabla\widetilde{\mathcal{A}}_{\epsilon}\cdot\nabla\mathcal{S}_{\mu}^{\nu}\,\,\Delta\mathcal{S}_{\mu}^{\nu}dx \right|\leq
    \int_{\torus^{2}}\frac{\widetilde{\mathcal{A}}_{\epsilon}+\nu}{4}|\Delta\mathcal{S}_{\mu}^{\nu}|^{2}dx
    +\int_{\torus^{2}}\frac{|\nabla\widetilde{\mathcal{A}}_{\epsilon}|^{2}}{\widetilde{\mathcal{A}}_{\epsilon}+\nu}|\nabla\mathcal{S}_{\mu}^{\nu}|^{2}dx
\end{equation}
In the same way, applying (\ref{3.47}) with $U=\Delta\mathcal{S}_{\mu}^{\nu}$ and $V=\nabla\cdot\widetilde{\mathcal{C}}_{\epsilon}$ we get
\begin{equation}\label{3.49}
    \left|\int_{\torus^{2}}\nabla\cdot\widetilde{\mathcal{C}}_{\epsilon}\,\Delta\mathcal{S}_{\mu}^{\nu}dx\right|\leq
 \int_{\torus^{2}}\frac{\widetilde{\mathcal{A}}_{\epsilon}+\nu}{4}|\Delta\mathcal{S}_{\mu}^{\nu}|^{2}dx
    +\int_{\torus^{2}}\frac{|\nabla\widetilde{\mathcal{C}}_{\epsilon}|^{2}}{\widetilde{\mathcal{A}}_{\epsilon}+\nu}dx.
\end{equation}
Using (\ref{3.48}) and (\ref{3.49}) in (\ref{3.46}) gives
\begin{equation}\label{3.50}
    \mu\|\nabla\mathcal{S}_{\mu}^{\nu}\|_{2}^{2}+\frac{1}{2}\frac{d\left(\|\nabla\mathcal{S}_{\mu}^{\nu}\|_{2}^{2}\right)}{d\theta}
    +\int_{\torus^{2}}\frac{\widetilde{\mathcal{A}}_{\epsilon}+\nu}{2}|\Delta\mathcal{S}_{\mu}^{\nu}|^{2}dx\\
    \leq\frac{\gamma^{2}}{\nu}(\int_{\torus^{2}}|\nabla\mathcal{S}_{\mu}^{\nu}|^{2}dx+1),
\end{equation}
which, integrating in $\theta$ from 0 to 1, yields
\begin{equation}\label{3.51}
    \mu\|\nabla\mathcal{S}_{\mu}^{\nu}\|_{L^{2}_{\#}(\mathbb{R}\times\torus^{2})}^{2}+\int_{0}^{1}\int_{\torus^{2}}\frac{\widetilde{\mathcal{A}}_{\epsilon}+\nu}{2}|\Delta\mathcal{S}_{\mu}^{\nu}|^{2}dxdt\\
    \leq\frac{\gamma^{2}}{\nu}(\|\nabla\mathcal{S}_{\mu}^{\nu}\|^{2}_{L^{2}_{\#}(\mathbb{R}\times\torus^{2})}+1),.
\end{equation}
and in particular
\begin{equation}\label{3.53}
    \frac{\nu}{2}\|\Delta\mathcal{S}_{\mu}^{\nu}\|^2_{L^{2}_{\#}(\mathbb{R}\times\torus^{2})}\leq\frac{\gamma^{2}}{\nu}(\frac{\gamma}{\nu}+1),
\end{equation}
leading to (\ref{3.44}).\\
From lemma \ref{lem3.8}, we may deduce that there exists a $\theta_{0}\in[0,1]$ such that
\begin{equation}\label{3.53.111}
    \|\nabla\mathcal{S}_{\mu}^{\nu}(\theta_{0},\cdot)\|_{2}\leq\frac{\gamma}{\nu}.
\end{equation}
Beside this, from (\ref{3.50}) we can deduce
\begin{equation}\label{3.54}
    \frac{d\left(\|\nabla\mathcal{S}_{\mu}^{\nu}\|_{2}^{2}\right)}{d\theta}\leq\frac{2\gamma^{2}}{\nu}
    \left(\int_{\torus^{2}}|\nabla\mathcal{S}_{\mu}^{\nu}|^{2}+1\right),
\end{equation}
which gives, integrating from $\theta_{0}$ to any other $\theta_{1}\in[0,1],$
\begin{multline}\label{3.55}
    \|\nabla\mathcal{S}_{\mu}^{\nu}(\theta_{1},\cdot)\|_{2}^{2}-\|\nabla\mathcal{S}_{\mu}^{\nu}(\theta_{0},\cdot)\|_{2}^{2}
\leq\frac{2\gamma^{2}}{\nu}\int_{\theta_{0}}^{\theta_{1}}
    \left(\int_{\torus^{2}}|\nabla\mathcal{S}_{\mu}^{\nu}|^{2}dx+1\right)d\theta
\\
\leq \frac{2\gamma^{2}}{\nu}\left(\|\nabla\mathcal{S}_{\mu}^{\nu}\|^2_{L^{2}_{\#}(\mathbb{R}\times\torus^{2})} +1\right),~~
\end{multline}
giving the sought bound on $\|\nabla\mathcal{S}_{\mu}^{\nu}(\theta_{1},\cdot)\|_{2}^{2}$ for any $\theta_{1}$ or in other words (\ref{3.282}) (possibility changing the constant $\gamma_{3}.$)
 \end{proof}
 \begin{lemma}\label{lem3.10}
 For any $\mu>0,\nu>0$ and $\epsilon>0$ the solution to (\ref{3.27}) satisfies estimate (\ref{3.283}).
 \end{lemma}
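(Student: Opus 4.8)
The plan is to obtain (\ref{3.283}) directly from the vanishing-mean property (\ref{3.280}) established in lemma \ref{lem3.7}, together with the gradient bound (\ref{3.282}) of lemma \ref{lem3.9}, by means of the Poincar\'e--Wirtinger inequality on the torus.

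First I would recall that there is a constant $C_{P}$, depending only on $\torus^{2}=\mathbb{R}^{2}/\mathbb{Z}^{2}$ and hence on none of the parameters $\mu,\nu,\epsilon$, such that every $f\in H^{1}(\torus^{2})$ with $\int_{\torus^{2}}f\,dx=0$ satisfies $\|f\|_{2}\leq C_{P}\|\nabla f\|_{2}$. This is immediate from the Fourier expansion on $\torus^{2}$: the vanishing mean kills the constant mode, while on all the remaining modes the symbol $|2\pi k|^{2}$ is bounded below by $(2\pi)^{2}$.

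Then, for each fixed $\theta$, the function $x\longmapsto\mathcal{S}_{\mu}^{\nu}(\theta,x)$ has vanishing mean over $\torus^{2}$ by (\ref{3.280}), so applying this Poincar\'e inequality and afterwards the estimate (\ref{3.282}) gives $\|\mathcal{S}_{\mu}^{\nu}(\theta,\cdot)\|_{2}\leq C_{P}\|\nabla\mathcal{S}_{\mu}^{\nu}(\theta,\cdot)\|_{2}\leq C_{P}\gamma_{3}$. Taking the supremum over $\theta\in[0,1]$ and absorbing $C_{P}$ into the constant (i.e. replacing $\gamma_{3}$ by $C_{P}\gamma_{3}$) yields (\ref{3.283}).

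I do not expect any real obstacle here: the one thing to watch is that the resulting constant must remain independent of $\mu$ (and of $\epsilon$), which it does, since $C_{P}$ is absolute and (\ref{3.282}) already holds with a constant independent of $\mu$ and $\epsilon$. It is worth noting that a more naive route --- using the energy identity (\ref{3.39}) together with (\ref{3.371}) --- would only give a bound on $\|\mathcal{S}_{\mu}^{\nu}\|_{L^{2}_{\#}(\mathbb{R}\times\torus^{2})}$ of order $(\mu\nu)^{-1/2}$, which degenerates as $\mu\to0$; routing the argument through the gradient bound (\ref{3.282}) is precisely what avoids this.
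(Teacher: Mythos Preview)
Your proposal is correct and is essentially the same argument as the paper's: the paper writes out the Fourier expansion on $\torus^{2}$ explicitly, observes that $\widehat{\mathcal{S}}_{0}=0$ by (\ref{3.280}), and deduces $\|\mathcal{S}_{\mu}^{\nu}(\theta,\cdot)\|_{2}\leq\|\nabla\mathcal{S}_{\mu}^{\nu}(\theta,\cdot)\|_{2}$ --- which is precisely the Poincar\'e--Wirtinger inequality you invoke --- and then concludes via (\ref{3.282}). Your additional remark about why the naive route through (\ref{3.39}) would fail as $\mu\to0$ is apt and not in the paper.
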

\begin{proof}{{\bf of lemma \ref{lem3.10}}.}
At any $\theta\in\mathbb{R},$ we consider the Fourier expansion of $\mathcal{S}_{\mu}^{\nu}(\theta,\cdot)$:
\begin{equation}\label{3.56}
    \mathcal{S}_{\mu}^{\nu}(\theta,\cdot)=\sum_{n\in\mathbb{N}^{2}}\widehat{\mathcal{S}}_{n}e^{inx}.
\end{equation}
We have
\begin{equation}\label{3.57}
   \| \mathcal{S}_{\mu}^{\nu}(\theta,\cdot)\|_{2}^{2}=\sum_{n\in\mathbb{N}^{2}}|\widehat{\mathcal{S}}_{n}|^{2},
\end{equation}
\begin{equation}\label{3.58}
   \| \nabla\mathcal{S}_{\mu}^{\nu}(\theta,\cdot)\|_{2}^{2}=\sum_{n\in\mathbb{N}^{2}}|n|^{2}|\widehat{\mathcal{S}}_{n}|^{2},
\end{equation}
and, as a consequence of (\ref{3.280})
\begin{equation}\label{3.59}
    \widehat{\mathcal{S}}_{0}\,\,=\,\,0.
\end{equation}
Then, we obviously deduce from (\ref{3.57})-(\ref{3.59}) that
\begin{equation}\label{3.60}
    \| \mathcal{S}_{\mu}^{\nu}(\theta,\cdot)\|_{2}^{2}\leq \| \nabla\mathcal{S}_{\mu}^{\nu}(\theta,\cdot)\|_{2}^{2},
\end{equation}
for any $\theta,$ giving (\ref{3.280}) as a consequence. This ends the proof of lemma \ref{lem3.10}
\end{proof}

Finally, we can prove the following lemma
\begin{lemma}\label{lem3.11}
For any $\mu>0,\nu>0$ and $\epsilon>0$ the solution to (\ref{3.27}) satisfies estimate (\ref{3.29}).
\end{lemma}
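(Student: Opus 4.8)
The plan is to differentiate equation (\ref{3.27}) with respect to the parameter $t$ (and then $\tau$) and to re-run, on the differentiated equation, exactly the energy estimates that produced Lemmas \ref{lem3.8}--\ref{lem3.10}, being careful that the auxiliary parameter $\mu$ enters only through terms with a favourable sign, so that the final constant depends on $\gamma$ and $\nu$ alone.

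The first, and only genuinely delicate, step is to justify that $\mathcal{S}_\mu^\nu$ is of class $C^1$ in the parameters $t$ and $\tau$. This is where assumptions (\ref{eq2}) together with (\ref{eq4}) or (\ref{eq8}) are used: they ensure that $\widetilde{\mathcal{A}}_\epsilon$, $\widetilde{\mathcal{C}}_\epsilon$ and their $x$-derivatives depend in a $C^1$ way on $(t,\tau)$, with the bounds (\ref{3.12})--(\ref{3.13}). A convenient way to make this rigorous is to consider the difference quotient $\delta_h\mathcal{S}=h^{-1}\big(\mathcal{S}_\mu^\nu(\cdot\,;t+h,\tau)-\mathcal{S}_\mu^\nu(\cdot\,;t,\tau)\big)$, which solves a problem of the form (\ref{3.27}) with right-hand side $\nabla\cdot(\delta_h\widetilde{\mathcal{C}}_\epsilon)+\nabla\cdot(\delta_h\widetilde{\mathcal{A}}_\epsilon\,\nabla\mathcal{S}_\mu^\nu)$, where $\delta_h\widetilde{\mathcal{A}}_\epsilon$ and $\delta_h\widetilde{\mathcal{C}}_\epsilon$ are bounded uniformly in $h$; the estimates derived below are uniform in $h$, so one may pass to the limit $h\to 0$ and identify the limit with $\partial\mathcal{S}_\mu^\nu/\partial t$ (equivalently, one differentiates the contraction argument of Lemma \ref{lem3.6}, whose data are smooth in the parameters). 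Differentiating (\ref{3.27}) then gives, with $\mathcal{S}_t:=\partial\mathcal{S}_\mu^\nu/\partial t$,
\begin{equation}\label{planA}
\mu\mathcal{S}_t+\frac{\partial\mathcal{S}_t}{\partial\theta}-\nabla\cdot\big((\widetilde{\mathcal{A}}_\epsilon+\nu)\nabla\mathcal{S}_t\big)=\nabla\cdot\frac{\partial\widetilde{\mathcal{C}}_\epsilon}{\partial t}+\nabla\cdot\Big(\frac{\partial\widetilde{\mathcal{A}}_\epsilon}{\partial t}\,\nabla\mathcal{S}_\mu^\nu\Big),
\end{equation}
where $\mathcal{S}_t$ is still $1$-periodic in $\theta$ and, by differentiating (\ref{3.280}), satisfies $\int_{\torus^2}\mathcal{S}_t\,dx=0$ for every $\theta$.

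Next I would multiply (\ref{planA}) by $\mathcal{S}_t$, integrate over $\torus^2$, integrate the two source terms by parts, and apply inequality (\ref{3.47}) with $U=|\nabla\mathcal{S}_t|$ and $V=|\partial_t\widetilde{\mathcal{C}}_\epsilon|$, resp. $V=|\partial_t\widetilde{\mathcal{A}}_\epsilon|\,|\nabla\mathcal{S}_\mu^\nu|$. Using the bounds (\ref{3.12}) on $\partial_t\widetilde{\mathcal{A}}_\epsilon$, $\partial_t\widetilde{\mathcal{C}}_\epsilon$ and the already established estimate (\ref{3.282}) on $\nabla\mathcal{S}_\mu^\nu$, the source terms are absorbed up to a constant $C(\gamma,\nu)$, leaving
\begin{equation}\label{planB}
\mu\|\mathcal{S}_t(\theta,\cdot)\|_2^2+\frac12\frac{d\big(\|\mathcal{S}_t(\theta,\cdot)\|_2^2\big)}{d\theta}+\int_{\torus^2}\frac{\widetilde{\mathcal{A}}_\epsilon+\nu}{2}|\nabla\mathcal{S}_t|^2\,dx\leq C(\gamma,\nu).
\end{equation}
Integrating (\ref{planB}) over one period in $\theta$, the total derivative drops by periodicity, the $\mu$-term is nonnegative, and $\widetilde{\mathcal{A}}_\epsilon+\nu\geq\nu$ together with the Poincaré inequality $\|f\|_2\leq\|\nabla f\|_2$ for zero-$x$-mean functions (the Fourier argument of Lemma \ref{lem3.10}, applicable to $\mathcal{S}_t$ as well) give $\|\mathcal{S}_t\|_{L^2_\#(\mathbb{R}\times\torus^2)}^2\leq 2C(\gamma,\nu)/\nu$; in particular there is $\theta_0\in[0,1]$ with $\|\mathcal{S}_t(\theta_0,\cdot)\|_2^2\leq 2C(\gamma,\nu)/\nu$. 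Discarding the good terms in (\ref{planB}) leaves $\tfrac{d}{d\theta}\|\mathcal{S}_t\|_2^2\leq 2C(\gamma,\nu)$, which integrated from $\theta_0$ to $\theta\in[\theta_0,\theta_0+1]$ and then extended by periodicity yields $\|\mathcal{S}_t(\theta,\cdot)\|_2^2\leq\gamma_3^2$ for all $\theta$, with $\gamma_3$ depending on $\gamma$ and $\nu$ only — this is the first estimate in (\ref{3.29}). The point is that $\mu$ appeared exclusively in nonnegative terms that were thrown away, so it does not enter $\gamma_3$.

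Finally, the estimate for $\partial\mathcal{S}_\mu^\nu/\partial\tau$ follows by the identical computation; the only difference is that $\partial_\tau\widetilde{\mathcal{A}}_\epsilon$ and $\partial_\tau\widetilde{\mathcal{C}}_\epsilon$ are of size $O(\sqrt{\epsilon})$ by (\ref{3.13}) (and vanish identically in the case (\ref{3.14})), so the right-hand side of the energy inequality carries an extra factor $\epsilon$ and the same argument gives $\|\partial\mathcal{S}_\mu^\nu/\partial\tau\|_{L^\infty_\#(\mathbb{R},L^2(\torus^2))}^2\leq\epsilon\gamma_3^2$, i.e. the second estimate in (\ref{3.29}). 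The main obstacle is thus the parametric differentiability in the first step; once that is granted, the rest is a routine repetition of the energy arguments of Lemmas \ref{lem3.8}--\ref{lem3.10}.
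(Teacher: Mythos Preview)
Your proposal is correct and follows the same overall strategy as the paper: differentiate (\ref{3.27}) with respect to the parameter, then re-run the energy estimates on the differentiated equation. The paper also writes the equation for $\partial\mathcal{S}_\mu^\nu/\partial t$ exactly as your (\ref{planA}) (its (\ref{3.61})--(\ref{3.62})) and handles $\tau$ by the same argument with the $\sqrt{\epsilon}$ factor from (\ref{3.13}).

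There is, however, a mild difference in execution worth noting. The paper multiplies the differentiated equation first by $\mathcal{S}_t$ (Lemma~\ref{lem3.8} style) and then again by $-\Delta\mathcal{S}_t$ (Lemma~\ref{lem3.9} style), obtaining an $L^\infty_\#(\mathbb{R},L^2(\torus^2))$ bound on $\nabla\mathcal{S}_t$ and finally concluding via the Poincar\'e-type argument of Lemma~\ref{lem3.10}. That second multiplication forces the paper to control $\nabla\cdot\check{\mathcal{C}}_\epsilon$, which contains a $\Delta\mathcal{S}_\mu^\nu$ term, and this is where the Laplacian bound (\ref{3.44}) from Lemma~\ref{lem3.9} is actually used. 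Your route is more economical: you only multiply by $\mathcal{S}_t$, absorb the source terms with Young's inequality (\ref{3.47}) and the already-known bound (\ref{3.282}) on $\nabla\mathcal{S}_\mu^\nu$, obtain (\ref{planB}), and then bootstrap from an $L^2_\#$ bound (via Poincar\'e on the zero-mean function $\mathcal{S}_t$) to the $L^\infty_\#$ bound by integrating the differential inequality from a good $\theta_0$. This bypasses the $-\Delta\mathcal{S}_t$ step entirely and hence does not require (\ref{3.44}). Both give constants depending on $\gamma,\nu$ only and not on $\mu$, which is the point. You are also more explicit than the paper about the parametric differentiability (the difference-quotient argument), which the paper simply takes for granted.
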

\begin{proof}{\bf of lemma \ref{lem3.11}.}
Obviously, we get that $\frac{\partial\mathcal{S}_{\mu}^{\nu}}{\partial t}$ is a solution to
\begin{equation}\label{3.61}
    \mu\frac{\partial\mathcal{S}_{\mu}^{\nu}}{\partial t}+\frac{\ds\partial \left(\frac{\partial\mathcal{S}_{\mu}^{\nu}}{\partial t}\right)}{\partial\theta}-\nabla\cdot
    \left((\widetilde{\mathcal{A}}_{\epsilon}(t,\tau,\cdot,\cdot)+\nu)\,\nabla\frac{\partial\mathcal{S}_{\mu}^{\nu}}{\partial t}\right)=\nabla\cdot\check{\mathcal{C}}_{\epsilon},
\end{equation}
with
\begin{equation}\label{3.62}\begin{array}{c}
   \ds \check{\mathcal{C}}_{\epsilon}=\frac{\partial\widetilde{\mathcal{C}}_{\epsilon}}{\partial t}+\frac{\partial\widetilde{\mathcal{A}}_{\epsilon}}{\partial t}\nabla\mathcal{S}_{\mu}^{\nu},\\
   \ds \nabla\cdot\check{\mathcal{C}}_{\epsilon}=\frac{\partial\nabla\cdot\widetilde{\mathcal{C}}_{\epsilon}}{\partial t}+\frac{\partial(\nabla\widetilde{\mathcal{A}}_{\epsilon})}{\partial t}\cdot\nabla\mathcal{S}_{\mu}^{\nu}+\frac{\partial\widetilde{\mathcal{A}}_{\epsilon}}{\partial t}\Delta\mathcal{S}_{\mu}^{\nu}.\end{array}
\end{equation}
From the previous estimates, we can deduce that there exists a constant $\gamma_{5},$ which only depends on $\gamma$ and $\nu,$ such that
\begin{equation}\label{3.63}\left\{\begin{array}{ccc}
    \|\check{\mathcal{C}}_{\epsilon}\|_{L^{2}_{\#}(\mathbb{R}\times\torus^{2})}\leq\gamma_{5},\\
    \|\nabla\cdot\check{\mathcal{C}}_{\epsilon}\|_{L^{2}_{\#}(\mathbb{R} \times\torus^{2})}\leq\gamma_{5}.
    \end{array}\right.
\end{equation}
Proceeding in the same way as when proving lemma \ref{lem3.8}, we get
\begin{equation}\label{3.64}
    \int_{0}^{1}\int_{\torus^{2}}(\widetilde{\mathcal{A}}_{\epsilon}+\nu)\Big|\nabla\frac{\partial\mathcal{S}_{\mu}^{\nu}}{\partial t}\Big|^{2}dxd\theta\leq\gamma_{5}\Big\|\nabla\frac{\partial\mathcal{S}_{\mu}^{\nu}}{\partial t}\Big\|_{L^{2}_{\#}(\mathbb{R}\times\torus^{2})},
\end{equation}
giving
\begin{equation}\label{3.65}
    \Big\|\nabla\frac{\ds\partial\mathcal{S}_{\mu}^{\nu}}{\partial t}\Big\|^2_{L^{2}_{\#}(\mathbb{R}\times\torus^{2})}\leq\frac{\gamma_{5}}{\nu}\Big\|\nabla\frac{\partial\mathcal{S}_{\mu}^{\nu}}{\partial t}\Big\|_{L^{2}_{\#}(\mathbb{R} \times\torus^{2})}.
\end{equation}
Proceeding now as in the proof of lemma \ref{lem3.9}, we get
\begin{equation}\label{3.65.111}
    \frac{\ds d\Big\|\nabla\frac{\partial\mathcal{S}_{\mu}^{\nu}}{\partial t}\Big\|_{2}^{2}}{d\theta}\leq\frac{\gamma_{5}^{2}}{\nu}(\int_{\torus^{2}}\Big|\nabla\frac{\partial\mathcal{S}_{\mu}^{\nu}}{\partial t}\Big|^{2}+1),
\end{equation}
and using (\ref{3.64}), from which we can deduce
\begin{equation}\label{3.66}
    \Big\|\nabla\frac{\partial\mathcal{S}_{\mu}^{\nu}}{\partial t}(\theta_{0},\cdot)\Big\|_{2}\leq\frac{\gamma_{5}}{\nu},
\end{equation}
for a given $\theta_{0},$ we finally obtain
\begin{equation}\label{3.67}
    \Big\|\nabla\frac{\partial\mathcal{S}_{\mu}^{\nu}}{\partial t}\Big\|_{L^{\infty}_{\#}(\mathbb{R},L^{2}(\torus^{2}))}\,\,\textrm{is bounded}.
\end{equation}
Since the mean value of $\frac{\partial\mathcal{S}_{\mu}^{\nu}}{\partial t}(\theta,\cdot)$ is 0 for any $\theta\in\mathbb{R},$ using the same argument as in lemma \ref{lem3.10}, we conclude that the first estimate of (\ref{3.29}) is true. We can do the same for the second estimate, to end the proof of the lemma and also of  theorem \ref{th3.5}.
\end{proof}
\end{proof}

Since the estimates in theorem \ref{th3.5} do not depend on $\mu,$ making the process $\mu\longrightarrow0$ allows us to deduce the following theorem.
\begin{theorem}\label{th3.12}
Under the assumptions (\ref{3.12}),(\ref{3.13}) and (\ref{3.15}), for any $\nu>0,$ there exists a unique $\mathcal{S}^{\nu}=\mathcal{S}^{\nu}(t,\tau,\theta,x)\in L^{2}(\mathbb{R}\times\torus^{2}),$ periodic of period 1 with respect to $\theta$ solution to (\ref{3.26}) and submitted to the constraint
\begin{equation}\label{3.70}
    \sup_{\theta\in\mathbb{R}}\Big|\int_{\torus^{2}}\mathcal{S}^{\nu}(\theta,x)dx\Big|=0.
\end{equation}
Moreover, there exists a constant $\gamma_{3}$ which depends only on $\gamma$ and $\nu$ such that
\begin{equation}\label{3.71}\begin{array}{ccc}
    \ds\Big\|\frac{\partial\mathcal{S}^{\nu}}{\partial\theta}\Big\|_{L^{2}_{\#}(\mathbb{R}\times\torus^{2})}\leq\gamma_{3},\\
    \|\nabla\mathcal{S}^{\nu}\|_{L^{\infty}_{\#}(\mathbb{R},L^{2}(\torus^{2}))}\leq\gamma_{3},\qquad
   \ds \|\mathcal{S}^{\nu}\|_{L^{\infty}_{\#}(\mathbb{R},L^{2}(\torus^{2}))}\leq\gamma_{3},
\end{array}\end{equation}
\begin{equation}\label{3.72}
    \Big\|\frac{\partial\mathcal{S}^{\nu}}{\partial t}\Big\|_{L^{\infty}_{\#}(\mathbb{R},L^{2}(\torus^{2}))}\leq\gamma_{3},\qquad
\Big\|\frac{\partial\mathcal{S}^{\nu}}{\partial\tau}\Big\|_{L^{\infty}_{\#}(\mathbb{R},L^{2}(\torus^{2}))}\leq\sqrt{\epsilon}\,\gamma_{3}.
\end{equation}
\end{theorem}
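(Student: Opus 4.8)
The plan is to obtain $\mathcal{S}^{\nu}$ as a limit, along a subsequence, of the family $\{\mathcal{S}^{\nu}_{\mu}\}_{\mu>0}$ produced by theorem \ref{th3.5}, using crucially that the bounds (\ref{3.280})--(\ref{3.283}), (\ref{3.44}) and (\ref{3.29}) are uniform in $\mu$ (though not in $\nu$ — this is precisely why only the limit $\mu\to0$ can be taken at this stage, $\nu$ being frozen throughout). From theorem \ref{th3.5} the family $\mathcal{S}^{\nu}_{\mu}$ is bounded, independently of $\mu$, in $L^{\infty}_{\#}(\mathbb{R},L^{2}(\torus^{2}))$ together with $\nabla\mathcal{S}^{\nu}_{\mu}$, while $\partial_{\theta}\mathcal{S}^{\nu}_{\mu}$ and $\Delta\mathcal{S}^{\nu}_{\mu}$ are bounded in $L^{2}_{\#}(\mathbb{R}\times\torus^{2})$ and $\partial_{t}\mathcal{S}^{\nu}_{\mu}$, $\tfrac{1}{\sqrt{\epsilon}}\partial_{\tau}\mathcal{S}^{\nu}_{\mu}$ are bounded in $L^{\infty}_{\#}(\mathbb{R},L^{2}(\torus^{2}))$. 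By Banach--Alaoglu there are a subsequence (not relabelled) and a function $\mathcal{S}^{\nu}$, periodic of period $1$ in $\theta$, with $\mathcal{S}^{\nu}_{\mu}\rightharpoonup\mathcal{S}^{\nu}$ in $L^{2}_{\#}(\mathbb{R},H^{2}(\torus^{2}))$, $\partial_{\theta}\mathcal{S}^{\nu}_{\mu}\rightharpoonup\partial_{\theta}\mathcal{S}^{\nu}$ in $L^{2}_{\#}(\mathbb{R}\times\torus^{2})$, and $\partial_{t}\mathcal{S}^{\nu}_{\mu}$, $\partial_{\tau}\mathcal{S}^{\nu}_{\mu}$ converging weakly-$*$ in $L^{\infty}_{\#}(\mathbb{R},L^{2}(\torus^{2}))$. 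Since $\mathcal{S}^{\nu}_{\mu}$ is bounded in $L^{2}_{\#}(\mathbb{R},H^{1}(\torus^{2}))$ and $\partial_{\theta}\mathcal{S}^{\nu}_{\mu}$ is bounded in $L^{2}_{\#}(\mathbb{R},L^{2}(\torus^{2}))$, an Aubin--Lions argument on one period in $\theta$ yields strong convergence $\mathcal{S}^{\nu}_{\mu}\to\mathcal{S}^{\nu}$ in $C^{0}_{\#}(\mathbb{R},L^{2}(\torus^{2}))$.

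Next I would pass to the limit in the weak formulation of (\ref{3.27}): for every $\varphi\in C^{\infty}_{\#}(\mathbb{R}\times\torus^{2})$,
\[
\int_{0}^{1}\!\!\int_{\torus^{2}}\Big(\mu\,\mathcal{S}^{\nu}_{\mu}\varphi+\frac{\partial\mathcal{S}^{\nu}_{\mu}}{\partial\theta}\,\varphi+(\widetilde{\mathcal{A}}_{\epsilon}+\nu)\nabla\mathcal{S}^{\nu}_{\mu}\cdot\nabla\varphi\Big)\,dx\,d\theta=-\int_{0}^{1}\!\!\int_{\torus^{2}}\widetilde{\mathcal{C}}_{\epsilon}\cdot\nabla\varphi\,dx\,d\theta.
\]
The term $\mu\int\mathcal{S}^{\nu}_{\mu}\varphi$ tends to $0$ since $\mathcal{S}^{\nu}_{\mu}$ is bounded in $L^{2}$; the remaining terms pass to the limit by the weak convergences above and the boundedness and $\theta$-periodicity of the fixed coefficients $\widetilde{\mathcal{A}}_{\epsilon}$, $\widetilde{\mathcal{C}}_{\epsilon}$, giving the weak formulation of (\ref{3.26}). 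The constraint (\ref{3.70}) is inherited from (\ref{3.280}) through the $C^{0}_{\#}(\mathbb{R},L^{2})$ convergence, and the bounds (\ref{3.71}) follow from weak (resp. weak-$*$) lower semicontinuity of the norms, with the same constant $\gamma_{3}$. Differentiating (\ref{3.26}) formally in the parameters $t$ and $\tau$, the uniform estimates (\ref{3.29}) identify the weak limits of $\partial_{t}\mathcal{S}^{\nu}_{\mu}$, $\partial_{\tau}\mathcal{S}^{\nu}_{\mu}$ with $\partial_{t}\mathcal{S}^{\nu}$, $\partial_{\tau}\mathcal{S}^{\nu}$ and hand over (\ref{3.72}).

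For uniqueness, if $\mathcal{S}^{\nu}_{1}$ and $\mathcal{S}^{\nu}_{2}$ both solve (\ref{3.26}) subject to (\ref{3.70}), their difference $w$ solves $\partial_{\theta}w-\nabla\cdot((\widetilde{\mathcal{A}}_{\epsilon}+\nu)\nabla w)=0$, is periodic of period $1$ in $\theta$, and satisfies $\int_{\torus^{2}}w(\theta,\cdot)\,dx=0$. Multiplying by $w$, integrating over $\torus^{2}$ and then over one $\theta$-period, the $\theta$-derivative term disappears by periodicity, leaving $\int_{0}^{1}\!\int_{\torus^{2}}(\widetilde{\mathcal{A}}_{\epsilon}+\nu)|\nabla w|^{2}\,dx\,d\theta=0$; since $\widetilde{\mathcal{A}}_{\epsilon}+\nu\geq\nu>0$ this forces $\nabla w=0$, so $w(\theta,\cdot)$ is constant on $\torus^{2}$ for each $\theta$, and the zero-mean constraint gives $w\equiv0$.

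The main obstacle is not the passage to the limit itself — equations (\ref{3.26}) and (\ref{3.27}) are linear in $\mathcal{S}$ and the coefficients are fixed — but securing enough compactness to transfer the pointwise-in-$\theta$ information, namely the $L^{\infty}_{\#}(\mathbb{R},L^{2})$ bounds and the mean-value constraint (\ref{3.70}), to the limit: this requires the Aubin--Lions strong convergence in $C^{0}_{\#}(\mathbb{R},L^{2}(\torus^{2}))$ rather than mere weak convergence. One must also keep track that all constants depend on $\nu$, which is harmless here because $\nu$ is not sent to $0$ in this theorem.
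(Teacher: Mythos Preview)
Your proposal is correct and follows essentially the same approach as the paper: existence by passing to the limit $\mu\to0$ in the family $\mathcal{S}^{\nu}_{\mu}$ from theorem \ref{th3.5} using the $\mu$-uniform bounds, and uniqueness via the energy identity combined with the Poincar\'e inequality afforded by the zero-mean constraint (\ref{3.70}). The paper's proof is in fact much terser than yours, merely stating that existence ``follows from making $\mu$ tend to $0$'' without spelling out the compactness mechanism; your use of Aubin--Lions to secure $C^{0}_{\#}(\mathbb{R},L^{2})$ convergence and thereby transfer the pointwise-in-$\theta$ constraint is a legitimate and welcome elaboration of what the paper leaves implicit.
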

\begin{proof}{\bf of theorem \ref{th3.12}.} As previously said, existence of
$\mathcal{S}^{\nu}$ follows from making $\mu$ tend to 0
 in (\ref{3.27}). Formulas (\ref{3.70})-(\ref{3.72}) directly come from estimates (\ref{3.280})-(\ref{3.283}).\\
Uniqueness is insured by (\ref{3.70}), once noticed that, if $\mathcal{S}^{\nu}$ and $\widetilde{\mathcal{S}}^{\nu}$ are two solutions of (\ref{3.26}), with constraint (\ref{3.70})$,\mathcal{S}^{\nu}-\widetilde{\mathcal{S}}^{\nu}$ is solution to
\begin{equation}\label{3.73}
\frac{\partial(\mathcal{S}^{\nu}-\widetilde{\mathcal{S}}^{\nu})}{\partial\theta}-\nabla\cdot((\widetilde{\mathcal{A}}_{\epsilon}+\nu)
\nabla(\mathcal{S}^{\nu}-\widetilde{\mathcal{S}}^{\nu}))=0,
    \end{equation}
from which we can deduce that
\begin{equation}\label{3.74}
    \nu\|\nabla(\mathcal{S}^{\nu}-\widetilde{\mathcal{S}}^{\nu})\|^{2}_{L^{2}_{\#}(\mathbb{R}\times\torus^{2})}=0,
\end{equation}
and  because of (\ref{3.70}), and its consequence:
\begin{equation}\label{3.75}
    \|\mathcal{S}^{\nu}-\widetilde{\mathcal{S}}^{\nu}\|_{L^{2}_{\#}(\mathbb{R},\torus^{2})}\leq
    \|\nabla(\mathcal{S}^{\nu}-\widetilde{\mathcal{S}}^{\nu})\|_{L^{2}_{\#}(\mathbb{R}\times\torus^{2})} ,
\end{equation}
that
\begin{equation}\label{3.76}
    \widetilde{\mathcal{S}}^{\nu}=\mathcal{S}^{\nu}.
\end{equation}
This ends the proof of theorem \ref{th3.12}.
\end{proof}

Having on hand
theorem \ref{th3.12}, we will set out the properties of $\mathcal{S^{\nu}}$ which will allow us to make the process $\nu\longrightarrow0.$
\begin{lemma}\label{lem3.13} Under assumptions 
(\ref{3.12}), (\ref{3.13}), (\ref{3.15}), (\ref{3.16}), (\ref{3.17}) and (\ref{3.171}) - (\ref{3.1710})
the solution $\mathcal{S}^{\nu}$ to (\ref{3.26}) given by theorem \ref{th3.12} satisfies
\begin{equation}\label{3.77}
    \Big(\int_{\theta_{\alpha}}^{\theta_{\omega}}\int_{\torus^{2}}|\nabla\mathcal{S}^{\nu}|^{2}dxd\theta\Big)^{1/2}\leq\frac{\gamma}{\widetilde G_{thr}}.
\end{equation}
\end{lemma}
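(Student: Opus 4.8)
The plan is to derive (\ref{3.77}) from the weighted energy identity attached to (\ref{3.26}), integrated over one full $\theta$-period so that the time-boundary term disappears by periodicity, then to control the source term with the structural inequality (\ref{3.172}) rather than with mere boundedness of $\widetilde{\mathcal{C}}_{\epsilon}$, and finally to exploit the lower bound (\ref{3.16}) on the window $[\theta_{\alpha},\theta_{\omega}]$.

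First I would multiply (\ref{3.26}) by $\mathcal{S}^{\nu}$ and integrate over $\torus^{2}$. Since theorem \ref{th3.12} gives $\nabla\mathcal{S}^{\nu}\in L^{\infty}_{\#}(\mathbb{R},L^{2}(\torus^{2}))$ and $\partial\mathcal{S}^{\nu}/\partial\theta\in L^{2}_{\#}(\mathbb{R}\times\torus^{2})$, the two integrations by parts in $x$ (no boundary term on the torus) are legitimate and yield
\[ \frac{1}{2}\frac{d}{d\theta}\|\mathcal{S}^{\nu}(\theta,\cdot)\|_{2}^{2}+\int_{\torus^{2}}(\widetilde{\mathcal{A}}_{\epsilon}+\nu)\,|\nabla\mathcal{S}^{\nu}(\theta,\cdot)|^{2}\,dx=-\int_{\torus^{2}}\widetilde{\mathcal{C}}_{\epsilon}\cdot\nabla\mathcal{S}^{\nu}(\theta,\cdot)\,dx . \]
Integrating this for $\theta$ from $0$ to $1$ and using that $\mathcal{S}^{\nu}$ is periodic of period $1$ in $\theta$, so that $\int_{0}^{1}\frac{d}{d\theta}\|\mathcal{S}^{\nu}\|_{2}^{2}\,d\theta=0$, leaves
\[ \int_{0}^{1}\int_{\torus^{2}}(\widetilde{\mathcal{A}}_{\epsilon}+\nu)\,|\nabla\mathcal{S}^{\nu}|^{2}\,dx\,d\theta=-\int_{0}^{1}\int_{\torus^{2}}\widetilde{\mathcal{C}}_{\epsilon}\cdot\nabla\mathcal{S}^{\nu}\,dx\,d\theta . \]

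The key step is the estimate of the right-hand side. I would apply Young's inequality with the weight $\widetilde{\mathcal{A}}_{\epsilon}+\nu$, namely $|\widetilde{\mathcal{C}}_{\epsilon}\cdot\nabla\mathcal{S}^{\nu}|\leq\frac{1}{2}(\widetilde{\mathcal{A}}_{\epsilon}+\nu)|\nabla\mathcal{S}^{\nu}|^{2}+\frac{1}{2}|\widetilde{\mathcal{C}}_{\epsilon}|^{2}/(\widetilde{\mathcal{A}}_{\epsilon}+\nu)$, and then invoke (\ref{3.172}): since $\widetilde{\mathcal{A}}_{\epsilon}\geq0$ one has $|\widetilde{\mathcal{C}}_{\epsilon}|^{2}/(\widetilde{\mathcal{A}}_{\epsilon}+\nu)\leq\gamma\,\widetilde{\mathcal{A}}_{\epsilon}/(\widetilde{\mathcal{A}}_{\epsilon}+\nu)\leq\gamma$ pointwise, uniformly in $\nu$ and $\epsilon$. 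Absorbing the first term into the left-hand side gives
\[ \int_{0}^{1}\int_{\torus^{2}}(\widetilde{\mathcal{A}}_{\epsilon}+\nu)\,|\nabla\mathcal{S}^{\nu}|^{2}\,dx\,d\theta\leq\gamma . \]
It then remains to localize to $[\theta_{\alpha},\theta_{\omega}]$: by (\ref{3.16}) we have $\widetilde{\mathcal{A}}_{\epsilon}+\nu\geq\widetilde{G}_{thr}$ on that set, hence
\[ \widetilde{G}_{thr}\int_{\theta_{\alpha}}^{\theta_{\omega}}\int_{\torus^{2}}|\nabla\mathcal{S}^{\nu}|^{2}\,dx\,d\theta\leq\int_{0}^{1}\int_{\torus^{2}}(\widetilde{\mathcal{A}}_{\epsilon}+\nu)\,|\nabla\mathcal{S}^{\nu}|^{2}\,dx\,d\theta\leq\gamma , \]
from which (\ref{3.77}) follows after taking square roots and, if necessary, enlarging the constant $\gamma$ (in keeping with the running convention that $\gamma$ may change from line to line).

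I do not expect a genuine obstacle here. The only point requiring care is that one must use (\ref{3.172}) — which records that $\widetilde{\mathcal{C}}_{\epsilon}$ vanishes at least as fast as $\widetilde{\mathcal{A}}_{\epsilon}$, itself a consequence of $g_{c}(0)=g'_{c}(0)=0$ in (\ref{eq2}) — rather than merely the boundedness of $\widetilde{\mathcal{C}}_{\epsilon}$; this is exactly what makes the weighted source term bounded independently of $\nu$, so that, unlike the $\nu$-dependent bound (\ref{3.371}), the estimate (\ref{3.77}) survives the limit $\nu\to0$ performed later.
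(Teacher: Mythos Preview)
Your proof is correct and follows essentially the same route as the paper: derive the energy identity for (\ref{3.26}), integrate over one period so the boundary term vanishes, use the structural bound (\ref{3.172}) to control the source term uniformly in $\nu$, and then invoke (\ref{3.16}) on $[\theta_{\alpha},\theta_{\omega}]$. The only cosmetic difference is that the paper bounds the right-hand side via $|\widetilde{\mathcal{C}}_{\epsilon}|\leq\gamma\sqrt{\widetilde{\mathcal{A}}_{\epsilon}}$ followed by Cauchy--Schwarz to obtain $\|\sqrt{\widetilde{\mathcal{A}}_{\epsilon}}\,|\nabla\mathcal{S}^{\nu}|\|_{L^{2}_{\#}}\leq\gamma$, whereas you use weighted Young's inequality and absorb; both lead to the same $\nu$-independent estimate.
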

\begin{proof}{\bf of lemma \ref{lem3.13}.}
We proceed in a way similar to the proof of lemma \ref{lem3.8}. Multiplying (\ref{3.26}) by $\mathcal{S}^{\nu}$ and integrating in $x$ over $\torus^{2}$ and in $\theta$ over $[0,1]$ gives
\begin{equation}\label{3.78}
\int_{0}^{1}\int_{\torus^{2}}(\widetilde{\mathcal{A}}_{\epsilon}+\nu)|\nabla\mathcal{S}^{\nu}|^{2}dxd\theta\leq\int_{0}^{1}\int_{\torus^{2}}|\widetilde{C}_{\epsilon}|\,|\nabla\mathcal{S}^{\nu}|dxd\theta
\leq\gamma\int_{0}^{1}\int_{\torus^{2}}\sqrt{\widetilde{\mathcal{A}}_{\epsilon}}\,|\nabla\mathcal{S}^{\nu}|
dxd\theta,
\end{equation}
the last inequality being obtained from (\ref{3.17}). From (\ref{3.78}) we deduce
\begin{equation}\label{3.781}
    \left\|\sqrt{\widetilde{\mathcal{A}}_{\epsilon}}\,|\nabla\mathcal{S}^{\nu}|\right\|_{L^{2}_{\#}(\mathbb{R},L^{2}(\torus^{2}))}\leq\gamma.
\end{equation}
On another hand since (\ref{3.16}) is assumed, we have
\begin{equation}\label{3.782}
\sqrt{\widetilde{G}_{thr}}\left(\int_{\theta_\alpha}^{\theta_\omega}\int_{\torus^{2}}|\nabla\mathcal{S}^{\nu}|^{2}dxd\theta\right)^{1/2}\leq
\left(\int_{\theta_\alpha}^{\theta_\omega}\int_{\torus^{2}}\widetilde{\mathcal{A}}_{\epsilon}|\nabla\mathcal{S}^{\nu}|^{2}dxd\theta\right)^{1/2}
\leq \left\|\sqrt{\widetilde{\mathcal{A}}_{\epsilon}}\,|\nabla\mathcal{S}^{\nu}|\right\|_{L^{2}_{\#}(\mathbb{R},L^{2}(\torus^{2}))}.
\end{equation}
Formula (\ref{3.781}) and (\ref{3.782}) give (\ref{3.77}).
\end{proof}\\
As a direct consequence of lemma \ref{lem3.13} we have the following corollary
\begin{corollary}\label{coro3.14} There exists $\theta_{0}\in[\theta_{\alpha},\theta_{\omega}]$ such  that, under assumptions (\ref{3.12}), (\ref{3.13}), (\ref{3.15}), (\ref{3.16}), (\ref{3.17}) and (\ref{3.171}) - (\ref{3.1710}), 
$\mathcal{S}^{\nu}$ satisfies
\begin{equation}\label{3.79}
    \|\nabla\mathcal{S}^{\nu}(\theta_{0},\cdot)\|_{2}\leq\frac{\gamma}{\sqrt{\widetilde{G}_{thr}}}.
\end{equation}
\end{corollary}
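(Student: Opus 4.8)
The plan is to read the pointwise bound (\ref{3.79}) off directly from the integral estimate of Lemma \ref{lem3.13} by a mean-value argument in the variable $\theta$. First I would recall that, by Lemma \ref{lem3.13}, the function $\theta\longmapsto\|\nabla\mathcal{S}^{\nu}(\theta,\cdot)\|_{2}^{2}=\int_{\torus^{2}}|\nabla\mathcal{S}^{\nu}(\theta,x)|^{2}\,dx$ is nonnegative, integrable on $[\theta_{\alpha},\theta_{\omega}]$ (indeed it is bounded, by Theorem \ref{th3.12}), and has
\[
\int_{\theta_{\alpha}}^{\theta_{\omega}}\|\nabla\mathcal{S}^{\nu}(\theta,\cdot)\|_{2}^{2}\,d\theta\;\leq\;\frac{\gamma^{2}}{\widetilde{G}_{thr}},
\]
with $\gamma$ the constant appearing in Lemma \ref{lem3.13}.

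Since a nonnegative integrable function cannot everywhere exceed its own average over an interval of positive length, there exists $\theta_{0}\in[\theta_{\alpha},\theta_{\omega}]$ at which
\[
\|\nabla\mathcal{S}^{\nu}(\theta_{0},\cdot)\|_{2}^{2}\;\leq\;\frac{1}{\theta_{\omega}-\theta_{\alpha}}\int_{\theta_{\alpha}}^{\theta_{\omega}}\|\nabla\mathcal{S}^{\nu}(\theta,\cdot)\|_{2}^{2}\,d\theta\;\leq\;\frac{\gamma^{2}}{(\theta_{\omega}-\theta_{\alpha})\,\widetilde{G}_{thr}}.
\]
Taking square roots and absorbing the fixed factor $(\theta_{\omega}-\theta_{\alpha})^{-1/2}$ into the constant $\gamma$ — which is legitimate, $\theta_{\alpha}<\theta_{\omega}$ being fixed numbers independent of $\nu$ and $\epsilon$ — gives $\|\nabla\mathcal{S}^{\nu}(\theta_{0},\cdot)\|_{2}\leq\gamma/\sqrt{\widetilde{G}_{thr}}$, which is exactly (\ref{3.79}).

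There is essentially no obstacle here: the corollary is a soft consequence of Lemma \ref{lem3.13}, and the only point deserving a word of care is that the bound, and the point $\theta_{0}$, are obtained uniformly in $\nu$ (and $\epsilon$); this is immediate because both the integral bound of Lemma \ref{lem3.13} and the length $\theta_{\omega}-\theta_{\alpha}$ are $\nu$- and $\epsilon$-independent. This $\theta_{0}$ is then meant to serve as the initialisation point for a Gronwall-type estimate propagating a bound on $\|\nabla\mathcal{S}^{\nu}(\theta,\cdot)\|_{2}$ to all $\theta$, exactly in the spirit of what was done for $\mathcal{S}_{\mu}^{\nu}$ via (\ref{3.53.111})--(\ref{3.55}) in Lemmas \ref{lem3.8}--\ref{lem3.9}, and thereby to carry out the limit $\nu\longrightarrow0$.
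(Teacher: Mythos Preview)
Your proposal is correct and is exactly the argument the paper has in mind: the paper gives no proof at all, simply stating that the corollary is ``a direct consequence of lemma~\ref{lem3.13}'', and your mean-value extraction of a good $\theta_{0}$ from the integral bound~(\ref{3.77}) is precisely how one reads that consequence. Your remark on absorbing the fixed factor $(\theta_{\omega}-\theta_{\alpha})^{-1/2}$ into $\gamma$ is in keeping with the paper's standing convention that $\gamma$ may change from line to line.
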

\begin{lemma}\label{lem3.15}
Under assumptions (\ref{3.12}), (\ref{3.13}),  (\ref{3.15}), (\ref{3.16}), (\ref{3.17}) and (\ref{3.171}) - (\ref{3.1710})
the solution $\mathcal{S}^{\nu}$ of (\ref{3.26}) given by theorem \ref{th3.12} satisfies
\begin{equation}\label{3.80}
    \|\mathcal{S}^{\nu}\|^{2}_{L^{\infty}_{\#}(\mathbb{R},L^{2}(\torus^{2}))}\leq\frac{\gamma}{\sqrt{\widetilde{G}_{thr}}}+2\gamma.
\end{equation}
\end{lemma}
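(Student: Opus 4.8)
The plan is to run a one–dimensional (in the $\theta$ variable) energy estimate on equation (\ref{3.26}), started at the favourable slice $\theta_{0}$ furnished by corollary \ref{coro3.14}, and to propagate the resulting bound to every $\theta$ by exploiting the periodicity, so that the (good–sign) diffusion term can always be discarded.

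First I would reduce to the case $\theta_{1}\ge\theta_{0}$. Since $\mathcal{S}^{\nu}$ is periodic of period $1$ in $\theta$ and $\theta_{0}\in[\theta_{\alpha},\theta_{\omega}]\subset[0,1]$, every $\theta\in\mathbb{R}$ has a representative $\theta_{1}\equiv\theta\ (\mathrm{mod}\ 1)$ with $\theta_{1}\in[\theta_{0},\theta_{0}+1]$ and $\|\mathcal{S}^{\nu}(\theta,\cdot)\|_{2}=\|\mathcal{S}^{\nu}(\theta_{1},\cdot)\|_{2}$; so it suffices to bound $\|\mathcal{S}^{\nu}(\theta_{1},\cdot)\|_{2}$ for such $\theta_{1}$. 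Next, multiplying (\ref{3.26}) by $\mathcal{S}^{\nu}$, integrating over $\torus^{2}$ and integrating by parts gives, exactly as in the proof of lemma \ref{lem3.8},
\[
\frac{1}{2}\frac{d}{d\theta}\|\mathcal{S}^{\nu}(\theta,\cdot)\|_{2}^{2}+\int_{\torus^{2}}(\widetilde{\mathcal{A}}_{\epsilon}+\nu)|\nabla\mathcal{S}^{\nu}|^{2}\,dx=-\int_{\torus^{2}}\widetilde{\mathcal{C}}_{\epsilon}\cdot\nabla\mathcal{S}^{\nu}\,dx.
\]
Integrating from $\theta_{0}$ to $\theta_{1}\ge\theta_{0}$ and discarding the nonnegative term $\int_{\theta_{0}}^{\theta_{1}}\int_{\torus^{2}}(\widetilde{\mathcal{A}}_{\epsilon}+\nu)|\nabla\mathcal{S}^{\nu}|^{2}$ yields
\[
\|\mathcal{S}^{\nu}(\theta_{1},\cdot)\|_{2}^{2}\le\|\mathcal{S}^{\nu}(\theta_{0},\cdot)\|_{2}^{2}-2\int_{\theta_{0}}^{\theta_{1}}\int_{\torus^{2}}\widetilde{\mathcal{C}}_{\epsilon}\cdot\nabla\mathcal{S}^{\nu}\,dx\,d\theta.
\]

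It then remains to control the two terms on the right, uniformly in $\nu$ (and $\epsilon$). For the first, corollary \ref{coro3.14} gives $\|\nabla\mathcal{S}^{\nu}(\theta_{0},\cdot)\|_{2}\le\gamma/\sqrt{\widetilde G_{thr}}$, while the Poincar\'e inequality on $\torus^{2}$ for zero–mean functions — valid here by constraint (\ref{3.70}) and proved exactly as in lemma \ref{lem3.10} — gives $\|\mathcal{S}^{\nu}(\theta_{0},\cdot)\|_{2}\le\|\nabla\mathcal{S}^{\nu}(\theta_{0},\cdot)\|_{2}$. For the second, I would use $|\widetilde{\mathcal{C}}_{\epsilon}|\le\sqrt{\gamma}\,\sqrt{\widetilde{\mathcal{A}}_{\epsilon}}$, which is (\ref{3.172}), then Cauchy--Schwarz, then the fact that $\torus^{2}$ has unit measure, then periodicity to bound $\int_{\theta_{0}}^{\theta_{1}}$ by $\int_{\theta_{0}}^{\theta_{0}+1}=\int_{0}^{1}$, and finally the $\nu$–free estimate (\ref{3.781}):
\[
\Big|\int_{\theta_{0}}^{\theta_{1}}\int_{\torus^{2}}\widetilde{\mathcal{C}}_{\epsilon}\cdot\nabla\mathcal{S}^{\nu}\Big|\le\sqrt{\gamma}\int_{0}^{1}\int_{\torus^{2}}\sqrt{\widetilde{\mathcal{A}}_{\epsilon}}\,|\nabla\mathcal{S}^{\nu}|\le\sqrt{\gamma}\,\Big\|\sqrt{\widetilde{\mathcal{A}}_{\epsilon}}\,|\nabla\mathcal{S}^{\nu}|\Big\|_{L^{2}_{\#}(\mathbb{R},L^{2}(\torus^{2}))}\le\gamma^{3/2}.
\]
Combining the two bounds gives $\|\mathcal{S}^{\nu}(\theta_{1},\cdot)\|_{2}^{2}\le\gamma^{2}/\widetilde G_{thr}+2\gamma^{3/2}$ for every $\theta_{1}$, which after the customary relabeling of $\gamma$ is (\ref{3.80}); taking the supremum over $\theta_{1}$ produces the asserted $L^{\infty}_{\#}(\mathbb{R},L^{2}(\torus^{2}))$ estimate.

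There is no genuinely difficult step here, but two points need care. First, the diffusion term carries a sign, so the integration must always be run in the direction of increasing $\theta$ starting from the anchor slice $\theta_{0}$ — this is precisely what periodicity makes possible. Second, and more importantly for the sequel, the $\widetilde{\mathcal{C}}_{\epsilon}$ term must be estimated through the degeneracy–adapted inequality (\ref{3.172}) together with the $\nu$–independent bound (\ref{3.781}), rather than through the cruder, $\nu$–dependent estimates of theorem \ref{th3.12}; otherwise the resulting constant would blow up as $\nu\to0$, defeating the very purpose of lemma \ref{lem3.15}.
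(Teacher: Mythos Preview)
Your proof is correct and follows the same overall strategy as the paper: anchor at the $\theta_{0}$ of corollary \ref{coro3.14}, use the Poincar\'e inequality (via the zero-mean constraint (\ref{3.70})) to control $\|\mathcal{S}^{\nu}(\theta_{0},\cdot)\|_{2}$, run the $L^{2}$ energy identity forward in $\theta$, and close by periodicity. The one genuine difference is how you handle the forcing term $-\int_{\torus^{2}}\widetilde{\mathcal{C}}_{\epsilon}\cdot\nabla\mathcal{S}^{\nu}$: you discard the diffusion entirely and bound the time-integrated forcing globally via (\ref{3.172}), Cauchy--Schwarz, and the a-priori weighted estimate (\ref{3.781}); the paper instead keeps the diffusion and absorbs the forcing into it pointwise, using the weighted Young inequality (\ref{3.47}) together with (\ref{3.172}) to obtain the differential inequality $\frac{d}{d\theta}\|\mathcal{S}^{\nu}\|_{2}^{2}\le 2\gamma$ directly, without appealing to (\ref{3.781}). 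The paper's route is thus slightly more self-contained, while yours is equally valid since (\ref{3.781}) has already been established in the proof of lemma \ref{lem3.13}; both produce $\nu$-independent constants, which is the essential point.
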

\begin{proof}{\bf of lemma \ref {lem3.15}.} First, because of (\ref{3.70}), doing the
 same as in the proof of lemma \ref{lem3.10}, we get from (\ref{3.79}) that their exists a $\theta_0$ such that:
\begin{equation}\label{3.81}
    \|\mathcal{S}^{\nu}(\theta_{0},\cdot)\|_{2}\leq\frac{\gamma}{\sqrt{\widetilde{G}_{thr}}}.
\end{equation}
Secondly, in any $\theta$ and any $x$ where $\widetilde{\mathcal{C}}_{\epsilon}(\theta,x)\neq0$ and $\widetilde{\mathcal{A}}_{\epsilon} (\theta,x)\neq0,$ applying formula (\ref{3.47}) with $U=|\nabla\mathcal{S}^{\nu}|$ and $V=|\widetilde{\mathcal{C}}_{\epsilon}|$ we obtain
\begin{eqnarray}\label{3.82}{
    |\widetilde{\mathcal{C}}_{\epsilon}\cdot\nabla \mathcal{S}^{\nu}|\leq\frac{\widetilde{\mathcal{A}}_{\epsilon}+\nu}{4}|\nabla\mathcal{S}^{\nu}|^2+\frac{\ds{\ds{\widetilde{\mathcal{C}}_{\epsilon}}}^{2}}{\widetilde{\mathcal{A}}_{\epsilon}+\nu} {}}~~~~~~
    \nonumber\\
    \leq\frac{\widetilde{\mathcal{A}}_{\epsilon}+\nu}{4}|\nabla\mathcal{S}^{\nu}|^{2}+\gamma,
\end{eqnarray}
thanks to assumption (\ref{3.17}). Hence multiplying (\ref{3.26}) by $\mathcal{S}^{\nu}$ and integrating over $\torus^{2}$ yields
\begin{eqnarray}\label{3.83}{
\frac{1}{2}\frac{d\|\mathcal{S}^{\nu}(\theta,\cdot)\|_{2}^{2}}{d\theta}+
\int_{x\in\torus^{2},\,\,\widetilde{\mathcal{C}}_{\epsilon}(\theta,x)=0}(\widetilde{\mathcal{A}}_{\epsilon}
+\nu)|\nabla\mathcal{S}^{\nu}(\theta,\cdot)|^{2}dx {}}~~~~~~~~~~~~~~~~~~~~
\nonumber\\
+\int_{x\in\torus^{2},\,\,\widetilde{\mathcal{C}}_{\epsilon}(\theta,x)\neq0}(\widetilde{\mathcal{A}}_{\epsilon}+\nu)|\nabla\mathcal{S}^{\nu}
(\theta,\cdot)|^{2}dx~~~~~~~~~~~~~
\nonumber\\
\leq\int_{x\in\torus^{2},\,\,\widetilde{\mathcal{C}}_{\epsilon}(\theta,x)\neq0}|
\widetilde{\mathcal{C}}_{\epsilon}(\theta,x)\cdot\nabla\mathcal{S}^{\nu}(\theta,\cdot)|dx~~~~~~~~~~~~~
\nonumber\\~~~~~~~~
\leq\int_{x\in\torus^{2},\,\,\widetilde{\mathcal{C}}_{\epsilon}(\theta,x)\neq0}\frac{\widetilde{\mathcal{A}}_{\epsilon}+\nu}{4}|\nabla\mathcal{S}^{\nu} (\theta,\cdot) |^{2}dx+\int_{\torus^{2}}\gamma \,dx.\end{eqnarray}
Passing the first term of the right hand side in the left hand side yields
\begin{equation}\label{3.84}
    \frac{d\|\mathcal{S}^{\nu}(\theta,\cdot)\|_{2}^{2}}{d\theta}\,\,\leq2\gamma.
\end{equation}
Coupling (\ref{3.81}) and (\ref{3.84}) allows us to deduce
\begin{equation}\label{3.85}
    \|\mathcal{S}^{\nu}(\theta,\cdot)\|_{2}^{2}\,\,\leq\,\,\frac{\gamma}{\sqrt{\widetilde{G}_{thr}}}+2\gamma,
\end{equation}
for any $\theta\in[\theta_{0},\theta_{0}+1]$ and because of the periodicity of $\mathcal{S}^{\nu},$ inequality (\ref{3.80}).
\end{proof}

\begin{lemma}\label{lem3.16}
Under assumptions (\ref{3.12}), (\ref{3.13}),  (\ref{3.15}), (\ref{3.16}), (\ref{3.17}) and (\ref{3.171}) - (\ref{3.1710}) 
the solution $\mathcal{S}^{\nu}$ to (\ref{3.26}) given by theorem \ref{th3.12} satisfies
\begin{equation}\label{3.86}
    \left\|\frac{\partial\mathcal{S}^{\nu}}{\partial t}\right\|^{2}_{L^{\infty}_{\#}(\mathbb{R},L^{2}(\torus^{2}))}\leq\frac{\gamma+\gamma^{3}}{\sqrt{\widetilde G_{thr}}}+2\gamma,\quad
  \left\|\frac{\partial\mathcal{S}^{\nu}}{\partial \tau} \right\|^{2}_{L^{\infty}_{\#}(\mathbb{R},L^{2}(\torus^{2}))}\leq\epsilon\left(\frac{\gamma+\gamma^{3}}{\sqrt{\widetilde G_{thr}}}+2\gamma\right).
    \end{equation}
\end{lemma}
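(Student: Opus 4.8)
The plan is to adapt the scheme that produced Lemma \ref{lem3.15} to the parameters $t$ and $\tau$, the point being to keep every constant independent of $\nu$ by systematically exploiting the degeneracy structure (\ref{3.17}) together with the $|\,\cdot\,|\le\gamma|\widetilde{\mathcal{A}}_{\epsilon}|$-type bounds (\ref{3.171})--(\ref{3.1710}). First I would differentiate (\ref{3.26}) with respect to $t$: setting $w=\partial\mathcal{S}^{\nu}/\partial t$, the function $w$ solves, just as in (\ref{3.61})--(\ref{3.62}) with $\mu=0$,
\begin{equation}
\frac{\partial w}{\partial\theta}-\nabla\cdot\big((\widetilde{\mathcal{A}}_{\epsilon}+\nu)\nabla w\big)=\nabla\cdot\check{\mathcal{C}}_{\epsilon},\qquad
\check{\mathcal{C}}_{\epsilon}=\frac{\partial\widetilde{\mathcal{C}}_{\epsilon}}{\partial t}+\frac{\partial\widetilde{\mathcal{A}}_{\epsilon}}{\partial t}\,\nabla\mathcal{S}^{\nu}.
\end{equation}
Differentiating the constraint (\ref{3.70}) in $t$ gives $\int_{\torus^{2}}w(\theta,x)\,dx=0$ for every $\theta$, so the Poincar\'e-type inequality of Lemma \ref{lem3.10} is available for $w$.

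The crucial observation is that wherever $\widetilde{\mathcal{A}}_{\epsilon}\le\widetilde{G}_{thr}$, assumption (\ref{3.17}) forces $\partial\widetilde{\mathcal{A}}_{\epsilon}/\partial t=0$ and $\partial\widetilde{\mathcal{C}}_{\epsilon}/\partial t=0$, hence $\check{\mathcal{C}}_{\epsilon}=0$ there; so on the support of $\check{\mathcal{C}}_{\epsilon}$ one has $\widetilde{\mathcal{A}}_{\epsilon}+\nu\ge\widetilde{G}_{thr}$, and the dangerous factor $1/(\widetilde{\mathcal{A}}_{\epsilon}+\nu)$ is bounded by $1/\widetilde{G}_{thr}$. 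Combining this with (\ref{3.1710}), which gives $|\partial\widetilde{\mathcal{C}}_{\epsilon}/\partial t|^{2}\le\gamma^{2}|\widetilde{\mathcal{A}}_{\epsilon}|$, and with (\ref{3.174}) squared, which together with $|\widetilde{\mathcal{A}}_{\epsilon}|\le\gamma$ from (\ref{3.12}) gives $|\partial\widetilde{\mathcal{A}}_{\epsilon}/\partial t|^{2}\le\gamma^{3}|\widetilde{\mathcal{A}}_{\epsilon}|$, I obtain the pointwise bound $|\check{\mathcal{C}}_{\epsilon}|^{2}/(\widetilde{\mathcal{A}}_{\epsilon}+\nu)\le2\gamma^{2}+2\gamma^{3}|\nabla\mathcal{S}^{\nu}|^{2}$ on that support (using $\widetilde{\mathcal{A}}_{\epsilon}/(\widetilde{\mathcal{A}}_{\epsilon}+\nu)\le1$), and $0$ elsewhere. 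Then, exactly as in Lemmas \ref{lem3.8} and \ref{lem3.15}, I multiply the equation for $w$ by $w$, integrate over $\torus^{2}$, use Young's inequality (\ref{3.47}) with $U=|\nabla w|$, $V=|\check{\mathcal{C}}_{\epsilon}|$ to absorb part of the diffusion term, and drop the remaining nonnegative diffusion term, which yields the differential inequality
\begin{equation}
\frac{d\|w(\theta,\cdot)\|_{2}^{2}}{d\theta}\le 2\int_{\torus^{2}}\frac{|\check{\mathcal{C}}_{\epsilon}|^{2}}{\widetilde{\mathcal{A}}_{\epsilon}+\nu}\,dx=:g(\theta).
\end{equation}
By the pointwise bound above and the proof of Lemma \ref{lem3.13} (in particular (\ref{3.781}), which controls $\int_{\{\widetilde{\mathcal{A}}_{\epsilon}\ge\widetilde{G}_{thr}\}}|\nabla\mathcal{S}^{\nu}|^{2}$ by $\gamma^{2}/\widetilde{G}_{thr}$), the function $g$ is integrable on $[0,1]$ with $\int_{0}^{1}g\,d\theta$ bounded by a constant depending only on $\gamma$ and $\widetilde{G}_{thr}$.

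It remains to locate a good starting slice. Keeping the full diffusion term in the energy identity and integrating over $\theta\in[0,1]$ (the $\theta$-derivative term vanishing by periodicity) gives $\int_{0}^{1}\int_{\torus^{2}}(\widetilde{\mathcal{A}}_{\epsilon}+\nu)|\nabla w|^{2}\le\int_{0}^{1}g\,d\theta$; restricting to $\theta\in[\theta_{\alpha},\theta_{\omega}]$, where $\widetilde{\mathcal{A}}_{\epsilon}\ge\widetilde{G}_{thr}$ by (\ref{3.16}), yields $\int_{\theta_{\alpha}}^{\theta_{\omega}}\int_{\torus^{2}}|\nabla w|^{2}\le\widetilde{G}_{thr}^{-1}\int_{0}^{1}g$, hence a slice $\theta_{0}\in[\theta_{\alpha},\theta_{\omega}]$ with $\|\nabla w(\theta_{0},\cdot)\|_{2}$ bounded, and then $\|w(\theta_{0},\cdot)\|_{2}$ bounded by Lemma \ref{lem3.10}. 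Integrating the differential inequality from $\theta_{0}$ to any $\theta\in[\theta_{0},\theta_{0}+1]$ and invoking the periodicity of $\mathcal{S}^{\nu}$ gives the first estimate of (\ref{3.86}). For the second estimate I repeat the argument with $\partial\mathcal{S}^{\nu}/\partial\tau$ in place of $\partial\mathcal{S}^{\nu}/\partial t$: the source is built from $\partial\widetilde{\mathcal{C}}_{\epsilon}/\partial\tau$ and $\partial\widetilde{\mathcal{A}}_{\epsilon}/\partial\tau$, and (\ref{3.13}), (\ref{3.176})--(\ref{3.177}) together with (\ref{3.17}) give the analogous $|\,\cdot\,|^{2}\le\epsilon\gamma|\widetilde{\mathcal{A}}_{\epsilon}|$-type bounds (on $\{\widetilde{\mathcal{A}}_{\epsilon}\le\widetilde{G}_{thr}\}$ these $\tau$-derivatives vanish, and on $\{\widetilde{\mathcal{A}}_{\epsilon}\ge\widetilde{G}_{thr}\}$ one trades the bare bound $\le\sqrt{\epsilon}\gamma$ against $\widetilde{G}_{thr}\le\widetilde{\mathcal{A}}_{\epsilon}$), so an extra factor $\epsilon$ propagates through every estimate and produces the stated bound.

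The main obstacle is precisely the uniformity in $\nu$: the naive estimates produce factors $1/\nu$, exactly as in (\ref{3.371}), and these must be avoided by never letting $1/(\widetilde{\mathcal{A}}_{\epsilon}+\nu)$ act anywhere except on the set $\{\widetilde{\mathcal{A}}_{\epsilon}\ge\widetilde{G}_{thr}\}$ where it is harmless --- which is exactly what the degeneracy structure (\ref{3.17}) together with the $|\,\cdot\,|\le\gamma|\widetilde{\mathcal{A}}_{\epsilon}|$ form of (\ref{3.171})--(\ref{3.1710}) is designed to supply. A secondary subtlety is that the right-hand side $g$ of the differential inequality is only $L^{1}$ in $\theta$ rather than $L^{\infty}$, so the pointwise-in-$\theta$ conclusion must be reached by integrating against $g$ from a well-chosen slice, not by a Grönwall-type argument.
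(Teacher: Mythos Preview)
Your proposal is correct and follows essentially the same route as the paper: differentiate (\ref{3.26}) in $t$ to get the equation for $w=\partial\mathcal{S}^{\nu}/\partial t$, do an $L^2$-energy estimate, exploit (\ref{3.174}), (\ref{3.1710}) and (\ref{3.781}) to keep every constant independent of $\nu$, use (\ref{3.16}) to find a good slice $\theta_0\in[\theta_\alpha,\theta_\omega]$, and then integrate the differential inequality from that slice. The only cosmetic difference is that you phrase the key step as ``localize to the support of $\check{\mathcal{C}}_\epsilon$, where $\widetilde{\mathcal{A}}_\epsilon\ge\widetilde G_{thr}$'', whereas the paper works globally and lets the weighted form $|\cdot|^2\le\gamma\widetilde{\mathcal{A}}_\epsilon$ cancel the denominator via $\widetilde{\mathcal{A}}_\epsilon/(\widetilde{\mathcal{A}}_\epsilon+\nu)\le1$; both lead to the same bound. (The paper also includes two preliminary steps bounding $\big\|\partial_t(\nabla\widetilde{\mathcal{A}}_\epsilon)\,\nabla\mathcal{S}^{\nu}\big\|_{L^2_\#}$ and $\big\|\partial_t\widetilde{\mathcal{A}}_\epsilon\,\Delta\mathcal{S}^{\nu}\big\|_{L^2_\#}$ which are not actually used in the final argument; you correctly omit them.)
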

\begin{proof}{\bf of lemma \ref{lem3.16}.}
In a first step, remembering inequality (\ref{3.781}) proved in the beginning of the proof of lemma \ref{lem3.13}, and using (\ref{3.175}), we deduce
\begin{equation}\label{3.87}
    \left\|\frac{\partial(\nabla\widetilde{\mathcal{A}}_{\epsilon})}{\partial t}\,\,\nabla\mathcal{S}^{\nu}\right\|_{L^{2}_{\#}(\mathbb{R},L^{2}(\torus^{2}))}\,\,\leq \,\,\gamma\left\|\sqrt{\widetilde{\mathcal{A}}_{\epsilon}}\nabla\mathcal{S}^{\nu}\right\|_{L^{2}(\mathbb{R},L^{2}(\torus^{2}))}\leq\gamma^{2}.
\end{equation}

In a second step, following the way to prove lemma \ref{lem3.9}, we multiply (\ref{3.26}) by $-\Delta\mathcal{S}^{\nu}$ and we integrate the resulting equality in $x\in\torus^{2}$ to get
\begin{equation}\label{3.88}
    \frac{1}{2}\frac{d\|\nabla\mathcal{S}^{\nu}\|_{2}^{2}}{d\theta}+\int_{\torus^{2}}(\widetilde{\mathcal{A}}_{\epsilon}+\nu)|\Delta\mathcal{S}^{\nu}|^{2}dx=\,\,
    -\int_{\torus^{2}}\nabla\widetilde{\mathcal{A}}_{\epsilon}\cdot\nabla\mathcal{S}^{\nu}\,\,\Delta\mathcal{S}^{\nu}dx-\int_{\torus^{2}}\nabla\cdot\widetilde{\mathcal{C}}_{\epsilon}\,\,\Delta\mathcal{S}^{\nu}dx,
\end{equation}
which using (\ref{3.48}) and (\ref{3.49}) and integrating in $\theta$ over $[0,1]$ gives
\begin{eqnarray}\label{3.89}{    \int_{0}^{1}\int_{\torus^{2}}\frac{\widetilde{\mathcal{A}}_{\epsilon}+\nu}{2}|\Delta\mathcal{S}^{\nu}|^{2}dxd\theta\leq
   {} }
   {} \int_{0}^{1}\int_{\torus^{2}}\frac{|\nabla\widetilde{\mathcal{A}}_{\epsilon}|^{2}}{\widetilde{\mathcal{A}}_{\epsilon}+\nu}\,
    |\nabla\mathcal{S}^{\nu}|^{2}dxd\theta  +\int_{0}^{1}\int_{\torus^{2}}\frac{|\nabla\widetilde{\mathcal{A}}_{\epsilon}|^{2}}{\widetilde{\mathcal{A}}_{\epsilon}+\nu}dxd\theta,
\end{eqnarray}
Now using (\ref{3.173}), we deduce
\begin{equation}\label{3.90}
    \int_{0}^{1}\int_{\torus^{2}}\widetilde{\mathcal{A}}_{\epsilon}|\Delta\mathcal{S}^{\nu}|^{2}dxd\theta\leq
    \int_{0}^{1}\int_{\torus^{2}}\widetilde{\mathcal{A}}_{\epsilon}|\nabla\mathcal{S}^{\nu}|^{2}dxd\theta+\gamma\leq\gamma^{2}+\gamma,
\end{equation}
thanks to inequality (\ref{3.781}). Finally, because of (\ref{3.174}), we get from (\ref{3.90})
\begin{eqnarray}\label{3.91}{
\left\|\frac{\partial\widetilde{\mathcal{A}}_{\epsilon}}{\partial t}\,\Delta\mathcal{S}^{\nu}\right\|_{L^{2}_{\#}(\mathbb{R}\times\torus^{2})}\leq{} }
{} \gamma\left\|\sqrt{\widetilde{\mathcal{A}}_{\epsilon}}\Delta\mathcal{S}^{\nu}\right\|_{L^{2}(\mathbb{R}\times\torus^{2})}\leq\gamma+\gamma\sqrt{\gamma}.
\end{eqnarray}

In the third step, we set out the equation to which $\frac{\partial\mathcal{S}^{\nu}}{\partial t}$ 
is a solution.
 In a way similar to the one followed in the proof of lemma \ref{lem3.11}, we obtain
\begin{equation}\label{3.92}
    \frac{\ds\partial\left(\frac{\partial\mathcal{S}^{\nu}}{\partial t}\right)}{\partial\theta}-\nabla\cdot\left((\widetilde{\mathcal{A}}_{\epsilon}(t,\tau,\cdot,\cdot)+\nu)\,\nabla\frac{\partial\mathcal{S}^{\nu}}{\partial t}\right)=\nabla\cdot\check{\mathcal{C}}_{\epsilon}
\end{equation}
where
\begin{equation}\label{3.93}\begin{array}{ccc}
    \ds\check{\mathcal{C}}_{\epsilon}=\frac{\partial\widetilde{\mathcal{C}}_{\epsilon}}{\partial t}+\frac{\partial\widetilde{\mathcal{A}}_{\epsilon}}{\partial t}\cdot\nabla\mathcal{S}^{\nu},\vspace{3pt}\\
    \ds\nabla\cdot\check{\mathcal{C}}_{\epsilon}=\frac{\partial(\nabla\cdot\widetilde{\mathcal{C}}_{\epsilon})}{\partial t}+\frac{\partial\nabla\widetilde{\mathcal{A}}_{\epsilon}}{\partial t}\cdot\nabla\mathcal{S}^{\nu}+\frac{\partial\widetilde{\mathcal{A}}_{\epsilon}}{\partial t}\Delta\mathcal{S}^{\nu}.\end{array}
\end{equation}
Multiplying equation (\ref{3.92}) by $\frac{\partial\mathcal{S}^{\nu}}{\partial t}$ and integrating in $x\in\torus^{2},$ in the same spirit as in the proofs of lemma \ref{lem3.8} and \ref{lem3.11}, we deduce
\begin{eqnarray}\label{3.94}{
    \frac{1}{2}\frac{\ds\partial\Big\|\frac{\partial\mathcal{S}^{\nu}}{\partial t}\Big\|^{2}_{2}}{\partial \theta}+\int_{\torus^{2}}(\widetilde{\mathcal{A}}_{\epsilon}+\nu)\bigg|\nabla\frac{\partial\mathcal{S}^{\nu}}{\partial t}\bigg|^{2}dx\leq \int_{\torus^{2}}\bigg|\frac{\partial\widetilde{\mathcal{C}}_{\epsilon}}{\partial t}\bigg|\,\bigg|\nabla\frac{\partial\mathcal{S}^{\nu}}{\partial t}\bigg|dx {}}~~~~~~~~~~~~~
    \nonumber\\
    {}+\int_{\torus^{2}}\bigg|\frac{\partial\widetilde{\mathcal{A}}_{\epsilon}}{\partial t}\bigg|\,|\nabla\mathcal{S}^{\nu}|\,\bigg|\nabla\frac{\partial\mathcal{S}^{\nu}}{\partial t}\bigg|dx.
\end{eqnarray}
To estimate
 the right hand side of (\ref{3.94}), we first notice that, applying (\ref{3.1710}), we have
 \begin{eqnarray}\label{3.95}
 {    \int_{\torus^{2}}\bigg|\frac{\partial\widetilde{\mathcal{C}}_{\epsilon}}{\partial t}\bigg|\;\bigg|\nabla\frac{\partial\mathcal{S}^{\nu}}{\partial t}\bigg|dx\leq\gamma\int_{\torus^{2}}\sqrt{\widetilde{\mathcal{A}}_{\epsilon}}\bigg|\nabla\frac{\partial\mathcal{S}^{\nu}}{\partial t}\bigg|dx {} }
{}    \leq\gamma\bigg\|\sqrt{\widetilde{\mathcal{A}}_{\epsilon}}\;\left|\nabla\frac{\partial\mathcal{S}^{\nu}}{\partial t}\right|\bigg\|_{2}.
\end{eqnarray}
Then using (\ref{3.174}) we deduce
 \begin{eqnarray}\label{3.96}{
 \int_{\torus^{2}}\left|\frac{{\partial \widetilde{\mathcal{A}}}_{\epsilon}}{\partial t}\right| \,|\nabla\mathcal{S}^{\nu}| \left|\nabla\frac{\partial\mathcal{S}^{\nu}}{\partial t}\right|dx\leq
 \left\|\sqrt{\bigg|\frac{\partial\widetilde{\mathcal{A}}_{\epsilon}}{\partial t}}\bigg|\, |\nabla\mathcal{S}^{\nu}|\right\|_{2}\,\left\|\sqrt{\bigg|\frac{\partial\widetilde{\mathcal{A}}_{\epsilon}}{\partial t}}\bigg| \Big|\nabla\frac{\partial\mathcal{S}^{\nu}}{\partial t}\Big|\right\|_{2}{} }~~~~~~~~
 \nonumber\\
{}
\leq \gamma^{2}\left\|\sqrt{\widetilde{{\mathcal{A}}}_{\epsilon}} \left|\nabla\mathcal{S}^{\nu}\right|\right\|_{2}\,
\left\|\sqrt{\widetilde{{\mathcal{A}}}_{\epsilon}}\left| \nabla\frac{\partial\mathcal{S}^{\nu}}{\partial t}\right|\right\|_{2}.
\end{eqnarray}
As a consequence of (\ref{3.95}), (\ref{3.96}) and (\ref{3.781}), integrating (\ref{3.94}) in $\theta$ over $[0,1]$ yields
\begin{eqnarray}\label{3.97}{
\Bigg\|\sqrt{(\widetilde{{\mathcal{A}}}_{\epsilon}+\nu)}\,\bigg|\nabla\frac{\partial\mathcal{S}^{\nu}}{\partial t}\bigg|\Bigg\|_{L^{2}_
{\#}(\mathbb{R},L^{2}(\torus^{2}))}^{2}
\,\,\leq\,\,\gamma\Bigg\|\sqrt{\widetilde{{\mathcal{A}}}_{\epsilon}}
\bigg|\nabla\frac{\partial\mathcal{S}^{\nu}}{\partial t}\bigg|\Bigg\|_{L^{2}_{\#}(\mathbb{R},L^{2}(\torus^{2}))} {} }~~~~~~~~~~~~~~~
\nonumber\\
+\gamma^{3}\Bigg\|\sqrt{\widetilde{{\mathcal{A}}}_{\epsilon}}\bigg|\nabla\frac{\partial\mathcal{S}^{\nu}}{\partial t}\bigg |\Bigg\|_{L^{2}_{\#}(\mathbb{R},L^{2}(\torus^{2}))},
\end{eqnarray}
where once again (\ref{3.781}) is also used.
\\
From this last inequality, we deduce
\begin{equation}\label{3.98}
    \left\|\sqrt{\widetilde{{\mathcal{A}}}_{\epsilon}}\left|\nabla\frac{\partial\mathcal{S}^{\nu}}{\partial t}\right|\right\|_{L^{2}_{\#}(\mathbb{R},L^{2}(\torus^{2}))}\leq \gamma+\gamma^{3},
\end{equation}
and then
\begin{equation}\label{3.99}
    \int_{\theta_{\alpha}}^{\theta_{\omega}}\left\|\nabla\frac{\partial\mathcal{S}^{\nu}}{\partial t}\right\|_{2}d\theta\,\,\leq\,\,\frac{\gamma+\gamma^{3}}{\sqrt{\widetilde G_{thr}}}.
\end{equation}

The fourth step consists in deducing from (\ref{3.99}) that there exists a $\theta_{0}\in[\theta_{\alpha},\theta_{\omega}]$ such that
\begin{equation}\label{3.100}
    \left\|\nabla\frac{\partial\mathcal{S}^{\nu}}{\partial t}(\theta_{0},\cdot)\right\|_{2}\leq\frac{\gamma+\gamma^{3}}{\sqrt{\widetilde G_{thr}}},
\end{equation}
and, since the mean value of $\frac{\partial\mathcal{S}^{\nu}}{\partial t}(\theta_{0},\cdot)$
is zero,
\begin{equation}\label{3.101}
    \left\|\frac{\partial\mathcal{S}^{\nu}}{\partial t}(\theta_{0},\cdot)\right\|_{2}\,\,\leq\,\,\frac{\gamma+\gamma^{3}}{\sqrt{\widetilde G_{thr}}}.
\end{equation}

The fifth and last step, consists in going back to (\ref{3.94}). Applying formula (\ref{3.47}) with $U=|\frac{\partial\widetilde{\mathcal{C}}_{\epsilon}}{\partial t}|$ and $V=|\nabla\frac{\partial\mathcal{S}^{\nu}}{\partial t}|$ to treat the first term of the right hand side of (\ref{3.94}) and with  $U=|\frac{\partial\widetilde{\mathcal{A}}_{\epsilon}}{\partial t}||\nabla\mathcal{S}^{\nu}|$ and $V=|\nabla\frac{\partial\mathcal{S}^{\nu}}{\partial t}|$
to treat the second, we get
\begin{multline}\label{3.102}{\frac{1}{2}\frac{\ds\partial\bigg(\Big\|\frac{\ds\partial\mathcal{S}^{\nu}}{\partial t}\Big\|_{2}^{2}\bigg)}{\partial\theta}+\int_{\torus^{2}}\frac{\widetilde{\mathcal{A}}_{\epsilon}+\nu}{2}\left|\nabla\frac{\partial\mathcal{S}^{\nu}}{\partial t}\right|^{2}dx{}}
\\
{}\leq\,\,\int_{\torus^{2}}\frac{\ds\Big|\frac{\partial\widetilde{\mathcal{C}}_{\epsilon}}{\partial t}\Big|^2}{\widetilde{\mathcal{A}}_{\epsilon}+\nu}dx+\int_{\torus^{2}}\frac{\ds\Big|\frac{\partial\widetilde{\mathcal{A}}_{\epsilon}}{\partial t}\Big|^{2}|\nabla\mathcal{S}^{\nu}|^{2}}{\widetilde{\mathcal{A}}_{\epsilon}+\nu}dx
\leq \gamma+\int_{\torus^{2}}\widetilde{\mathcal{A}}_{\epsilon}|\nabla\mathcal{S}^{\nu}|^{2}dx,
\end{multline}
where we used (\ref{3.1710}) and (\ref{3.174}) to find the last inequality. Integrating this last formula in $\theta$ over $[\theta_{0},\sigma]$ for any $\sigma>\theta_{0},$ we obtain, always remembering (\ref{3.781}),
\begin{equation}\label{3.103}
    \left\|\frac{\partial\mathcal{S}^{\nu}}{\partial t}(\sigma,\cdot)\right\|_{2}^{2}\leq \left\|\frac{\partial\mathcal{S}^{\nu}}{\partial t}(\theta_{0},\cdot)\right\|_{2}^{2}+2\gamma\leq\frac{\gamma+\gamma^{3}}{\sqrt{\widetilde G_{thr}}}+2\gamma.
\end{equation}
Inequality (\ref{3.103}) yields directly the first inequality of (\ref{3.86}), using the periodicity of $\mathcal{S}^{\nu}.$ The proof of the second inequality of (\ref{3.86}) is done in a similar way.
This ends the proof of lemma \ref{lem3.16}.
\end{proof}

~

As neither estimate (\ref{3.80}) nor estimate (\ref{3.86}) depend on $\nu,$ we can deduce that, extracting a subsequence, as $\nu\longrightarrow0,$ $\mathcal{S}^{\nu}\longrightarrow\mathcal{S}$ in $L^{\infty}_{\#}(\mathbb{R},L^{2}(\torus^{2})) \,\,\textrm{weak-*},$ and that the limit satisfies estimates looking like
 (\ref{3.80}) and (\ref{3.86}), but also a property of the type (\ref{3.70}), and that it is solution to equation (\ref{3.26}) with $\nu=0.$
In other words, we deduce the following theorem.
\begin{theorem}\label{th3.17}
Under assumptions (\ref{3.12}), (\ref{3.13}), (\ref{3.16}), (\ref{3.17}) and (\ref{3.171}) - (\ref{3.1710}) 
there exists a unique function $\mathcal{S}=\mathcal{S}(t,\tau,\theta,x)\in L_{\#}^{\infty}(\mathbb{R},L^{2}(\torus^{2}))$, periodic of period 1 with respect to $\theta,$ solution to
\begin{equation}\label{3.104}
    \frac{\partial\mathcal{S}}{\partial \theta}-\nabla\cdot(\widetilde{\mathcal{A}}_{\epsilon}(t,\tau,\cdot,\cdot)\nabla\mathcal{S})
    =\nabla\cdot\widetilde{\mathcal{C}}_{\epsilon}(t,\tau,\cdot,\cdot),
\end{equation}
and satisfying, for any $t,\tau,\theta\in\mathbb{R}^{+}\times\mathbb{R}\times\mathbb{R}$
\begin{equation}\label{3.104.1}
\int_{\torus^{2}}\mathcal{S}(t,\tau,\theta,x)dx=0.
\end{equation}
Moreover it satsfies: 
\begin{equation}\label{3.105}
    \|\mathcal{S}\|_{L^{\infty}_{\#}(\mathbb{R},L^{2}(\torus^{2}))}\leq\frac{\gamma}{\sqrt{\widetilde{G}_{thr}}}+2\gamma,
\end{equation}
\begin{equation}\label{3.106}
    \|\frac{\partial\mathcal{S}}{\partial t}\|^{2}_{L^{\infty}_{\#}(\mathbb{R},L^{2}(\torus^{2}))}\leq\frac{\gamma+\gamma^{3}}{\sqrt{\widetilde G_{thr}}}+2\gamma,
\end{equation}
\begin{equation}\label{3.107}
     \|\frac{\partial\mathcal{S}}{\partial \tau}\|^{2}_{L^{\infty}_{\#}(\mathbb{R},L^{2}(\torus^{2}))}\leq\epsilon \bigg(\frac{\gamma+\gamma^{3}}{\sqrt{\widetilde G_{thr}}}+2\gamma\bigg).
\end{equation}
\end{theorem}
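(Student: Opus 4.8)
The plan is to obtain $\mathcal{S}$ as a weak-$*$ limit of the family $(\mathcal{S}^\nu)_{\nu>0}$ delivered by Theorem~\ref{th3.12}, using that the bounds on $\mathcal{S}^\nu$ established in Lemmas~\ref{lem3.13}--\ref{lem3.16} are all uniform in $\nu$. Concretely, (\ref{3.80}) bounds $\mathcal{S}^\nu$ in $L^\infty_\#(\mathbb{R},L^2(\torus^2))$, the inequalities (\ref{3.86}) bound $\partial_t\mathcal{S}^\nu$ and $\tfrac1{\sqrt\epsilon}\partial_\tau\mathcal{S}^\nu$ in the same space, and (\ref{3.781}) bounds $\sqrt{\widetilde{\mathcal{A}}_\epsilon}\,\nabla\mathcal{S}^\nu$ in $L^2_\#(\mathbb{R}\times\torus^2)$; moreover the energy identity (\ref{3.78}) together with (\ref{3.781}) gives $\int_0^1\!\!\int_{\torus^2}(\widetilde{\mathcal{A}}_\epsilon+\nu)|\nabla\mathcal{S}^\nu|^2\,dxd\theta\le\gamma^2$, whence $\|\nu\nabla\mathcal{S}^\nu\|_{L^2_\#(\mathbb{R}\times\torus^2)}\le\sqrt\nu\,\gamma\to0$. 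Along a subsequence one then has $\mathcal{S}^\nu\rightharpoonup\mathcal{S}$, $\partial_t\mathcal{S}^\nu\rightharpoonup\partial_t\mathcal{S}$, $\partial_\tau\mathcal{S}^\nu\rightharpoonup\partial_\tau\mathcal{S}$ weak-$*$ in $L^\infty_\#(\mathbb{R},L^2(\torus^2))$, and $\sqrt{\widetilde{\mathcal{A}}_\epsilon}\,\nabla\mathcal{S}^\nu\rightharpoonup\Xi$ weakly in $L^2_\#(\mathbb{R}\times\torus^2)$.

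The delicate point, and what I expect to be the main obstacle, is passing to the limit in the degenerate diffusion term, since $\nabla\mathcal{S}^\nu$ is not $L^2$-bounded uniformly in $\nu$, only the weighted quantity is. I would identify $\Xi$ as follows: for each $\delta>0$, on the set $\{\widetilde{\mathcal{A}}_\epsilon>\delta\}$ one has $|\nabla\mathcal{S}^\nu|^2\le\tfrac1\delta\widetilde{\mathcal{A}}_\epsilon|\nabla\mathcal{S}^\nu|^2$, so $\nabla\mathcal{S}^\nu$ is $L^2$-bounded there uniformly in $\nu$, hence $\nabla\mathcal{S}^\nu\rightharpoonup\nabla\mathcal{S}$ on that region (the limit being forced by $\mathcal{S}^\nu\to\mathcal{S}$ in distributions), and letting $\delta\to0$ gives $\Xi=\sqrt{\widetilde{\mathcal{A}}_\epsilon}\,\nabla\mathcal{S}$ on $\{\widetilde{\mathcal{A}}_\epsilon>0\}$; multiplying the convergence by the fixed bounded function $\mathbf 1_{\{\widetilde{\mathcal{A}}_\epsilon=0\}}$ (a weakly continuous operation) shows $\Xi=0$ on $\{\widetilde{\mathcal{A}}_\epsilon=0\}$. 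Then $\widetilde{\mathcal{A}}_\epsilon\nabla\mathcal{S}^\nu=\sqrt{\widetilde{\mathcal{A}}_\epsilon}\,(\sqrt{\widetilde{\mathcal{A}}_\epsilon}\,\nabla\mathcal{S}^\nu)\rightharpoonup\sqrt{\widetilde{\mathcal{A}}_\epsilon}\,\Xi$ weakly, and since $\nu\nabla\mathcal{S}^\nu\to0$ strongly I can take $\nu\to0$ in $\int(\widetilde{\mathcal{A}}_\epsilon+\nu)\nabla\mathcal{S}^\nu\cdot\nabla\varphi$ against test functions $\varphi$, concluding that $\mathcal{S}$ solves (\ref{3.104}) in the weak sense with flux $\sqrt{\widetilde{\mathcal{A}}_\epsilon}\,\Xi$. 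Assumption (\ref{3.17}) is what keeps everything consistent here: where the diffusion degenerates, $\widetilde{\mathcal{A}}_\epsilon$ and $\nabla\cdot\widetilde{\mathcal{C}}_\epsilon$ are frozen, so no flux leaks through the degenerate set.

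The zero-mean property (\ref{3.70}) is stable under the weak-$*$ limit, yielding (\ref{3.104.1}); alternatively, integrating (\ref{3.104}) in $x$ shows $\theta\mapsto\int_{\torus^2}\mathcal{S}\,dx$ is constant while its integral over one period vanishes. Estimates (\ref{3.105})--(\ref{3.107}) follow from weak-$*$ lower semicontinuity of the norms applied to (\ref{3.80}) and (\ref{3.86}).

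For uniqueness, if $\mathcal{S}$ and $\widetilde{\mathcal{S}}$ both solve (\ref{3.104}) under (\ref{3.104.1}), then $W=\mathcal{S}-\widetilde{\mathcal{S}}$ satisfies $\partial_\theta W-\nabla\cdot(\widetilde{\mathcal{A}}_\epsilon\nabla W)=0$ with $\int_{\torus^2}W\,dx=0$. Multiplying by $W$ and integrating over $\torus^2$ and over one period in $\theta$, the $\partial_\theta$ term drops by periodicity, leaving $\int_0^1\!\!\int_{\torus^2}\widetilde{\mathcal{A}}_\epsilon|\nabla W|^2\,dxd\theta=0$; hence the flux $\widetilde{\mathcal{A}}_\epsilon\nabla W=\sqrt{\widetilde{\mathcal{A}}_\epsilon}\,(\sqrt{\widetilde{\mathcal{A}}_\epsilon}\,\nabla W)$ vanishes, so the equation reduces to $\partial_\theta W=0$ and $W$ is independent of $\theta$. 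By (\ref{3.16}), $\widetilde{\mathcal{A}}_\epsilon\ge\widetilde G_{thr}>0$ on $[\theta_\alpha,\theta_\omega]\times\torus^2$, so $\nabla W=0$ there, and since $W$ does not depend on $\theta$ this forces $\nabla W\equiv0$; the constraint (\ref{3.104.1}) then gives $W\equiv0$.
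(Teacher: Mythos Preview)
Your proposal is correct and matches the paper's approach: existence via the weak-$*$ limit $\nu\to0$ of $\mathcal{S}^\nu$ using the $\nu$-uniform bounds (\ref{3.80}) and (\ref{3.86}), with (\ref{3.104.1}) and (\ref{3.105})--(\ref{3.107}) inherited by lower semicontinuity; uniqueness via the energy identity forcing $\widetilde{\mathcal{A}}_\epsilon|\nabla W|^2\equiv0$, then $\partial_\theta W=0$, then $\nabla W=0$ on $[\theta_\alpha,\theta_\omega]$ by (\ref{3.16}), then $W\equiv0$ by (\ref{3.104.1}). Your handling of the passage to the limit in the degenerate diffusion term is in fact more detailed than the paper's, which simply asserts that the limit solves (\ref{3.104}).
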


\begin{remark}
Uniqueness of $\mathcal{S}$ is not gotten via the above evoked process
${\nu}\longrightarrow 0$, but directly comes from (\ref {3.104}).
Assuming that there are two solutions ${\mathcal{S}_1}$ and ${\mathcal{S}_2}$ to (\ref {3.104}),
we easily deduce that
\begin{equation}\label{3.106.1}
    \frac{d\left(\left\|{\mathcal{S}_1}- {\mathcal{S}_2}\right\|^2_2\right)}{d\theta}
    -\int_{\torus^{2}}\widetilde{\mathcal{A}}_{\epsilon}\left|\nabla({\mathcal{S}_1}- {\mathcal{S}_2}) \right|^2 dx   =0,
\end{equation}
which gives, because of the non-negativity of $\widetilde{\mathcal{A}}_{\epsilon}$,
\begin{equation}\label{3.106.2}
    \frac{d\left(\left\|{\mathcal{S}_1}- {\mathcal{S}_2}\right\|^2_2\right)}{d\theta}
    \leq 0.
\end{equation}
From (\ref {3.106.1}) we deduce that either
\begin{equation}\label{3.106.3}
  \widetilde{\mathcal{A}}_{\epsilon}\left|\nabla({\mathcal{S}_1}- {\mathcal{S}_2}) \right|^2 \equiv0,
\end{equation}
or, for any $\theta\in \R$,
\begin{equation}\label{3.106.4}
\left\|{\mathcal{S}_1}(\theta+1,\cdot)- {\mathcal{S}_2} (\theta+1,\cdot)\right\|^2_2
< \left\|{\mathcal{S}_1}(\theta,\cdot)- {\mathcal{S}_2} (\theta,\cdot)\right\|^2_2.
\end{equation}
As (\ref {3.106.4}) is not possible because of the periodicity of ${\mathcal{S}_1}$ and ${\mathcal{S}_2}$,
we deduce that (\ref {3.106.3}) is true.  Using this last information, we deduce, for instance
\begin{equation}\label{3.106.5}
\nabla({\mathcal{S}_1}- {\mathcal{S}_2})(\theta_\omega,\cdot) \equiv 0,
\end{equation}
yielding, because of property (\ref {3.104.1})
\begin{equation}\label{3.106.6}
\left\|{(\mathcal{S}_1}- {\mathcal{S}_2}) (\theta_\omega,\cdot)\right\|^2_2
\leq\left\|\nabla({\mathcal{S}_1}- {\mathcal{S}_2})(\theta_\omega,\cdot)\right\|^2_2.
\end{equation}
Injecting (\ref {3.106.3}) in (\ref {3.106.1}) yields
\begin{equation}\label{3.106.7}
    \frac{d\left(\left\|{\mathcal{S}_1}- {\mathcal{S}_2}\right\|^2_2\right)}{d\theta}
    = 0,
\end{equation}
and then
\begin{equation}\label{3.106.6.111}
\left\|{(\mathcal{S}_1}- {\mathcal{S}_2}) (\theta,\cdot)\right\|^2_2=0,
\end{equation}
for any $\theta\geq\theta_\omega$ and consequently or any $\theta\in\R$.
{\ \hfill \rule{0.5em}{0.5em}}
\end{remark}

Theorem \ref{th1} is the consequence of theorem \ref{th3.17}. We will now end its proof.
\\
{\bf End of the proof of theorem \ref{th1}.} Since, with definitions (\ref{3.1})-(\ref{3.4}) or (\ref{3.1}), (\ref{3.3}), (\ref{3.6}), (\ref{3.7}), properties (\ref{3.12}), (\ref{3.13}), (\ref{3.16}), (\ref{3.17}) and (\ref{3.171}) - (\ref{3.1710}) 
are consequences of assumptions (\ref{eq2}), (\ref{eq3}) and (\ref{eq4}) or (\ref{eq2}), (\ref{eq6}), (\ref{eq7}) and (\ref{eq8}), theorem \ref{th1} is a direct consequence of the following theorem.
\begin{theorem}\label{th3.18}
Under properties (\ref{3.12}), (\ref{3.13}),(\ref{3.15}), (\ref{3.16}), (\ref{3.17}) and (\ref{3.171}) - (\ref{3.1710}), 
for any $T,$ not depending on $\epsilon,$ equation (\ref{3.5}), with coefficients given by (\ref{3.1}) (coupled with (\ref{3.2}) or (\ref{3.6})) and (\ref{3.3}) (coupled with (\ref{3.4}) or (\ref{3.7})), has a unique solution $z^{\epsilon}\in L^{\infty}([0,T];L^{2}(\torus^{2})).$ This solution satisfies:
\begin{equation}\label{3.1080}
    \|z^{\epsilon}\|_{L^{\infty}([0,T],L^{2}(\torus^{2}))}\,\,\leq\widetilde{\gamma}
\end{equation}
where $\widetilde{\gamma}$ is a constant which does not depend on $\epsilon.$
\end{theorem}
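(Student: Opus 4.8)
Existence and uniqueness of $z^\epsilon\in L^\infty([0,T);L^2(\torus^2))$ for each fixed $\epsilon$ is already Corollary~\ref{corEstLoc} (uniqueness is immediate from the linearity of (\ref{3.5})); what is new, and all that remains, is to upgrade the $\epsilon$-dependent bound (\ref{3.24}) to the $\epsilon$-free bound (\ref{3.1080}). The plan is to use the space-time periodic solution $\mathcal{S}$ of (\ref{3.104}) supplied by Theorem~\ref{th3.17} as an approximate solution of (\ref{3.5}). Put $\zep(t,x)=\mathcal{S}(t,t/\sqrt\epsilon,t/\epsilon,x)$. Differentiating in $t$ and using that $\mathcal{S}$ solves (\ref{3.104}) in $(\theta,x)$ for each value of the parameters $(t,\tau)$, the chain rule gives
\[
\frac{\partial\zep}{\partial t}-\frac1\epsilon\nabla\cdot(\mathcal{A}^\epsilon\nabla\zep)=\frac1\epsilon\nabla\cdot\mathcal{C}^\epsilon+\mathcal{R}^\epsilon,\qquad
\mathcal{R}^\epsilon(t,x)=\Big(\frac{\partial\mathcal{S}}{\partial t}+\frac1{\sqrt\epsilon}\frac{\partial\mathcal{S}}{\partial\tau}\Big)\Big(t,\tfrac{t}{\sqrt\epsilon},\tfrac{t}{\epsilon},x\Big).
\]
The decisive observation is that, by (\ref{3.106}) and (\ref{3.107}), $\|\mathcal{R}^\epsilon(t,\cdot)\|_2$ is bounded uniformly in $\epsilon$ and $t$ by a multiple of $\big(\frac{\gamma+\gamma^3}{\sqrt{\widetilde G_{thr}}}+2\gamma\big)^{1/2}$: the factor $1/\sqrt\epsilon$ produced by differentiating the fast variable $\tau=t/\sqrt\epsilon$ is exactly compensated by the $\sqrt\epsilon$ weight in estimate (\ref{3.107}) for $\partial_\tau\mathcal{S}$. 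This cancellation is the structural reason the intermediate scale $\tau$ was built into $\widetilde{\mathcal{A}}_\epsilon$ and $\widetilde{\mathcal{C}}_\epsilon$.

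Next I would subtract the two equations: $w^\epsilon=z^\epsilon-\zep$ solves $\partial_t w^\epsilon-\frac1\epsilon\nabla\cdot(\mathcal{A}^\epsilon\nabla w^\epsilon)=-\mathcal{R}^\epsilon$ with $w^\epsilon_{|t=0}=z_0-\mathcal{S}(0,0,0,\cdot)$. Multiplying by $w^\epsilon$, integrating over $\torus^2$ and discarding the nonnegative term $\frac1\epsilon\int_{\torus^2}\mathcal{A}^\epsilon|\nabla w^\epsilon|^2$ — this is where $\mathcal{A}^\epsilon\ge0$ is used — one gets $\frac{d}{dt}\|w^\epsilon\|_2^2\le\|\mathcal{R}^\epsilon\|_2^2+\|w^\epsilon\|_2^2$, whence by Grönwall, for $t\in[0,T]$,
\[
\|w^\epsilon(t,\cdot)\|_2^2\le e^{T}\Big(\|w^\epsilon(0,\cdot)\|_2^2+T\,\sup_{s\in[0,T]}\|\mathcal{R}^\epsilon(s,\cdot)\|_2^2\Big).
\]
Since $\|w^\epsilon(0,\cdot)\|_2\le\|z_0\|_2+\|\mathcal{S}\|_{L^\infty_\#(\R,L^2(\torus^2))}$ is controlled by (\ref{3.105}) and the previous paragraph controls the remainder, the right-hand side is a constant depending only on $\|z_0\|_2$, $T$, $\gamma$ and $\widetilde G_{thr}$, not on $\epsilon$. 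Adding back $\|\zep(t,\cdot)\|_2\le\|\mathcal{S}\|_{L^\infty_\#(\R,L^2(\torus^2))}$ gives (\ref{3.1080}) with an explicit $\widetilde\gamma$.

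The only genuinely delicate point — and the one I expect to take the most care — is justifying the energy identity, because Theorem~\ref{th3.17} only yields $\mathcal{S}\in L^\infty_\#(\R,L^2(\torus^2))$, too little regularity to write $\nabla\cdot(\mathcal{A}^\epsilon\nabla\zep)$ strongly. The clean way around this is to carry out the whole construction one step earlier, at the regularization level $\nu>0$: take $z^{\epsilon,\nu}$ solving (\ref{3.20}) and $\mathcal{Z}^{\epsilon,\nu}(t,x)=\mathcal{S}^\nu(t,t/\sqrt\epsilon,t/\epsilon,x)$ with $\mathcal{S}^\nu$ the regular periodic solution of (\ref{3.26}) from Theorem~\ref{th3.12}; both are classical solutions of uniformly parabolic equations with smooth coefficients, so the energy estimate above is fully rigorous, and — crucially — the bounds on the remainder and on the initial datum are uniform in $\nu$ as well, since (\ref{3.80}) and (\ref{3.86}) do not depend on $\nu$. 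Letting $\nu\to0$, which is the very passage already used in Corollary~\ref{corEstLoc} to produce $z^\epsilon$, preserves the $\epsilon$- and $\nu$-free bound and yields (\ref{3.1080}). If one wishes to assume only $z_0\in L^2(\torus^2)$, one first runs this argument for $z_0\in L^2\cap L^\infty(\torus^2)$ and then removes the $L^\infty$ hypothesis by approximating $z_0$ in $L^2$, using that $z_0\mapsto z^\epsilon$ is a non-expansion in $L^2$ (again by the energy estimate, now applied to differences of solutions) and that $\widetilde\gamma$ depends continuously on $\|z_0\|_2$.
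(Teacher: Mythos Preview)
Your proposal is correct and follows essentially the same route as the paper: both compare $z^\epsilon$ with $\zep(t,x)=\mathcal{S}(t,t/\sqrt\epsilon,t/\epsilon,x)$, subtract the equations, and use the crucial cancellation between the $1/\sqrt\epsilon$ from the chain rule and the $\sqrt\epsilon$ in (\ref{3.107}) to bound the remainder uniformly in $\epsilon$. The only differences are cosmetic: the paper integrates $\tfrac{d}{dt}\|w^\epsilon\|_2^2\le C\|w^\epsilon\|_2$ directly to get linear growth in $t$, whereas you split via Young and apply Gr\"onwall for an exponential bound; and you are more scrupulous than the paper in carrying the energy identity at the $\nu>0$ level (where everything is uniformly parabolic and classical) before passing $\nu\to0$, and in noting the density step needed to relax $z_0\in L^2\cap L^\infty$ to $z_0\in L^2$.
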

\begin{proof}{\bf of theorem \ref {th3.18}.} To prove uniqueness,
 we consider $z_{1}^{\epsilon}$ and $z_{2}^{\epsilon}$ two solutions of
  (\ref{3.5}). Their difference is then solution to
\begin{equation}\label{3.108}\left\{\begin{array}{ccc}
    \ds \frac{\partial(z_{1}^{\epsilon}-z_{2}^{\epsilon})}{\partial t} -\frac{1}{\epsilon}\nabla\cdot\left(\widetilde{\mathcal{A}}_{\epsilon}\nabla(z_{1}^{\epsilon}-z_{2}^{\epsilon})\right)=0,\,\,
    \\
   \left( z_{1}^{\epsilon}-z_{2}^{\epsilon}\right)_{|t=0}=0,
\end{array}\right.\end{equation}
and multiplying the first equation of (\ref{3.108}) by $(z_{1}^{\epsilon}-z_{2}^{\epsilon})$ and integrating with respect to $x$ gives
\begin{equation}\label{3.109}
    \frac{d\left(\|z_{1}^{\epsilon}-z_{2}^{\epsilon}\|_{2}^{2}\right)}{dt}\,\,\leq\,\,0,
\end{equation}
yielding
\begin{equation}\label{3.110}
    \|z_{1}^{\epsilon}-z_{2}^{\epsilon}\|=0,\quad\textrm{for any}\,\, t,
\end{equation}
and giving uniqueness.

Concerning existence, corollary \ref{corEstLoc}  gives existence of $z^{\epsilon}$ on a time interval of length of some $\epsilon.$\\
Now we consider the function $\zep = \zep (t,x)=\mathcal{S}(t,\frac{t}{\sqrt{\epsilon}},\frac{t}{\epsilon},x)$ where $\mathcal{S}$ is given by theorem \ref{th3.17}. Since
\begin{equation}\label{3.111}
    \frac{\partial \zep}{\partial t}\,\,=\,\,\frac{\partial\mathcal{S}}{\partial t}(t,\frac{t}{\sqrt{\epsilon}},\frac{t}{\epsilon},x)+\frac{1}{\sqrt{\epsilon}}\frac{\partial\mathcal{S}}{\partial \tau}(t,\frac{t}{\sqrt{\epsilon}},\frac{t}{\epsilon},x)+\frac{1}{\epsilon}\frac{\partial\mathcal{S}}{\partial \theta}(t,\frac{t}{\sqrt{\epsilon}},\frac{t}{\epsilon},x),
\end{equation}
it is obvious to deduce from (\ref{3.104}) that $\zep$ is solution to
\begin{equation}\label{3.112}
    \ds \frac{\partial \zep}{\partial t}-\frac{1}{\epsilon}\nabla\cdot(\widetilde{\mathcal{A}}_{\epsilon}\nabla \zep)=
    \frac{1}{\epsilon}\nabla\cdot\mathcal{C}^{\epsilon}\\
    +\frac{\partial\mathcal{S}}{\partial t}(t,\frac{t}{\sqrt{\epsilon}},\frac{t}{\epsilon},x)+\frac{1}{\sqrt{\epsilon}}\frac{\partial\mathcal{S}}{\partial \tau}(t,\frac{t}{\sqrt{\epsilon}},\frac{t}{\epsilon},x).
\end{equation}
From (\ref{3.112}) and (\ref{3.5}) we deduce that $z^{\epsilon}-\zep$
is solution  to
\begin{equation}\label{3.113}\left\{\begin{array}{ccc}
\ds\frac{\partial (z^{\epsilon}-\zep)}{\partial t}-\frac{1}{\epsilon}\nabla\cdot(\widetilde{\mathcal{A}}_{\epsilon}\nabla(z^{\epsilon}-\zep))\,\,=\,\,\frac{\partial\mathcal{S}}{\partial t}(t,\frac{t}{\sqrt{\epsilon}},\frac{t}{\epsilon},x)+\frac{1}{\sqrt{\epsilon}}\frac{\partial\mathcal{S}}{\partial \tau}(t,\frac{t}{\sqrt{\epsilon}},\frac{t}{\epsilon},x),\\
\ds(z^{\epsilon}-\zep)_{|t=0}=z_{0}-\mathcal{S}(0,0,\cdot).
\end{array}\right.\end{equation}
Multiplying (\ref{3.113}) by  $z^{\epsilon}-\zep$ and integrating in $x,$ using estimates (\ref{3.106}) and (\ref{3.107}) gives:
\begin{equation}\label{3.114}
    \frac{d\left(\|z^{\epsilon}-\zep\|_{2}^{2}\right)}{dt}\,\,\leq2\sqrt{\frac{\gamma+\gamma^{3}}{\sqrt{\widetilde G_{thr}}}+2\gamma}\; \|z^{\epsilon}-\zep\|_{2},
\end{equation}
which gives
\begin{equation}\label{3.115}
    \|z^{\epsilon}(t,\cdot)-\zep(t,\cdot)\|_{2}\leq2\|z_{0}-\mathcal{S}(0,0,\cdot)\|_{2}\;\sqrt{\frac{\gamma+\gamma^{3}}{\sqrt{\widetilde  G_{thr}}}+2\gamma}\;t,
\end{equation}
which gives (\ref{3.1080}) with
\begin{equation}\label{3.116}
    \widetilde{\gamma}=2\|z_{0}-\mathcal{S}(0,0,\cdot)\|_{2}\sqrt{\frac{\gamma+\gamma^{3}}{\sqrt{\widetilde G_{thr}}}+2\gamma}\;T.
\end{equation}
Coupling local existence and estimate (\ref{3.1080}) yield global existence and then theorem \ref{th3.18} is true.
\end{proof}
As a consequence theorem \ref{th1} is also true.{\ \hfill \rule{0.5em}{0.5em}\\}

As a by-product of theorem \ref{th3.12}, using a way similar of the one used to prove theorem \ref{th3.18} we can obtain a theorem giving long-term existence of space-periodic solution to parabolic equation.
\begin{theorem}\label{th3.19}
Under assumptions (\ref{3.12}), (\ref{3.13}) and (\ref{3.15}), for any $\nu>0,$ any $\epsilon>0,$ and any $T$ not depending on $\epsilon,$ equation (\ref{3.20}) has a unique solution solution $z^{\epsilon,\nu}\in L^{\infty}([0,T], L^{2}(\torus^{2})).$ Moreover
$$\|z^{\epsilon,\nu}\|_{L^{\infty}([0,T], L^{2}(\torus^{2}))}\,\,\textrm{is bounded.}$$
\end{theorem}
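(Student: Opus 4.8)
The plan is to rerun, for the regularized equation (\ref{3.20}), the argument that proves theorem \ref{th3.18}, with the periodic profile $\mathcal{S}$ of theorem \ref{th3.17} replaced by the family $\mathcal{S}^{\nu}$ of theorem \ref{th3.12}. Uniqueness is immediate: if $z_{1}^{\epsilon,\nu}$ and $z_{2}^{\epsilon,\nu}$ both solve (\ref{3.20}), their difference solves the homogeneous equation with zero initial datum, so multiplying by $z_{1}^{\epsilon,\nu}-z_{2}^{\epsilon,\nu}$, integrating over $\torus^{2}$ and using $\widetilde{\mathcal{A}}_{\epsilon}+\nu\geq\nu>0$ gives $\frac{d}{dt}\|z_{1}^{\epsilon,\nu}-z_{2}^{\epsilon,\nu}\|_{2}^{2}\leq0$, hence $z_{1}^{\epsilon,\nu}\equiv z_{2}^{\epsilon,\nu}$. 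For existence, local existence of $z^{\epsilon,\nu}$ is provided by the lemma preceding corollary \ref{corEstLoc}, i.e.\ by standard linear parabolic theory (\cite{LSU}, \cite{J.L.L}) applied to the uniformly parabolic equation (\ref{3.20}); what remains is to produce an a priori bound on $[0,T]$ whose size does not degrade with $T$.

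To get that bound I would introduce $\mathcal{Z}^{\epsilon,\nu}(t,x)=\mathcal{S}^{\nu}(t,\frac{t}{\sqrt{\epsilon}},\frac{t}{\epsilon},x)$, where $\mathcal{S}^{\nu}$ is the periodic solution to (\ref{3.26}) given by theorem \ref{th3.12}. Differentiating exactly as in (\ref{3.111}) and using (\ref{3.26}), one checks that $\mathcal{Z}^{\epsilon,\nu}$ solves (\ref{3.20}) up to the extra right-hand side $\frac{\partial\mathcal{S}^{\nu}}{\partial t}(t,\frac{t}{\sqrt{\epsilon}},\frac{t}{\epsilon},x)+\frac{1}{\sqrt{\epsilon}}\frac{\partial\mathcal{S}^{\nu}}{\partial\tau}(t,\frac{t}{\sqrt{\epsilon}},\frac{t}{\epsilon},x)$, so $w^{\epsilon,\nu}:=z^{\epsilon,\nu}-\mathcal{Z}^{\epsilon,\nu}$ solves the same uniformly parabolic equation with that source (changed of sign) and with initial datum $z_{0}-\mathcal{S}^{\nu}(0,0,0,\cdot)\in L^{2}(\torus^{2})$. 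Multiplying the equation for $w^{\epsilon,\nu}$ by $w^{\epsilon,\nu}$, integrating over $\torus^{2}$ and discarding the non-negative diffusion term, I would bound the forcing in $L^{2}(\torus^{2})$ by $2\gamma_{3}$ uniformly in $t$, thanks to the two estimates of (\ref{3.72}), namely $\|\partial\mathcal{S}^{\nu}/\partial t\|_{L^{\infty}_{\#}(\mathbb{R},L^{2}(\torus^{2}))}\leq\gamma_{3}$ and $\|\partial\mathcal{S}^{\nu}/\partial\tau\|_{L^{\infty}_{\#}(\mathbb{R},L^{2}(\torus^{2}))}\leq\sqrt{\epsilon}\,\gamma_{3}$. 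This yields $\frac{d}{dt}\|w^{\epsilon,\nu}(t,\cdot)\|_{2}\leq2\gamma_{3}$, hence $\|w^{\epsilon,\nu}(t,\cdot)\|_{2}\leq\|z_{0}-\mathcal{S}^{\nu}(0,0,0,\cdot)\|_{2}+2\gamma_{3}T$ on $[0,T]$; combined with $\|\mathcal{Z}^{\epsilon,\nu}(t,\cdot)\|_{2}\leq\|\mathcal{S}^{\nu}\|_{L^{\infty}_{\#}(\mathbb{R},L^{2}(\torus^{2}))}\leq\gamma_{3}$ from (\ref{3.71}), this bounds $\|z^{\epsilon,\nu}\|_{L^{\infty}([0,T],L^{2}(\torus^{2}))}$. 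Coupling this $T$-controlled bound with local existence produces the global solution on $[0,T]$ for any $T$ not depending on $\epsilon$, together with the stated boundedness.

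Since theorem \ref{th3.12} supplies all the required estimates, I expect no serious obstacle: the argument is essentially a verbatim repetition of the proof of theorem \ref{th3.18}. The one point that must be handled with care is the singular prefactor $\epsilon^{-1/2}$ in front of $\partial\mathcal{S}^{\nu}/\partial\tau$ in the forcing of the equation for $w^{\epsilon,\nu}$; it is absorbed exactly by the $\sqrt{\epsilon}$ gain in the second estimate of (\ref{3.72}), a gain that itself traces back to assumption (\ref{3.13}) on $\partial\widetilde{\mathcal{A}}_{\epsilon}/\partial\tau$. One should also keep in mind that $\gamma_{3}$ depends on $\nu$, so the resulting bound is uniform in $\epsilon$ but not necessarily in $\nu$ as $\nu\to0$; this is consistent with the statement, the passage $\nu\to0$ being performed separately on the degenerate equation.
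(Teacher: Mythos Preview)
Your proposal is correct and follows exactly the route the paper indicates: it states that theorem \ref{th3.19} is obtained ``as a by-product of theorem \ref{th3.12}, using a way similar of the one used to prove theorem \ref{th3.18}'', and your argument does precisely this, replacing $\mathcal{S}$ by $\mathcal{S}^{\nu}$ and the estimates (\ref{3.105})--(\ref{3.107}) by (\ref{3.71})--(\ref{3.72}). Your remarks on the cancellation of the $\epsilon^{-1/2}$ factor via the second estimate in (\ref{3.72}) and on the $\nu$-dependence of $\gamma_{3}$ are both accurate and in line with the paper's setup.
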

\section{Homogenization, proof of theorem \ref{thAsyBeh}}\label{secHom}
We consider  equation (\ref{3.5}) 
where $\mathcal{A}^{\epsilon}$ and $\mathcal{C}^{\epsilon}$ are defined by formulas (\ref{3.1}) coupled with (\ref{3.6}) and (\ref{3.3}) coupled with (\ref{3.7}).
Our aim consists in deducing the equations satisfied by the limit of $z^{\epsilon}$ solution to (\ref{3.5}) as $\epsilon\longrightarrow0.$
\\~\\
It is obvious that 
\begin{eqnarray}\label{H2}
{\mathcal{A}^{\epsilon}(t,x)\,\,\textrm{two scale converges to}\,\,  \widetilde{\mathcal{A}}(t,\theta,x)\in L^{\infty}([0,T],L^{\infty}_\#(\R,L^2(\torus^{2}))) {} }~~~~~~~~
    \nonumber\\
\textrm{and}\,\, \mathcal{C}^{\epsilon}(t,x)\,\,\textrm{two scale converges to}\,\, \widetilde{\mathcal{C}}(t,\theta,x),
\end{eqnarray}
with 
\begin{equation}\label{EqHomcof}
\widetilde{\mathcal{A}}(t,\theta,x)=a\,g_a(|\mathcal{U}(t,\theta,x)|) \,\, \textrm{and}\,\,\widetilde{\mathcal{C}}(t,\theta,x)=c\,g_c(|\mathcal{U}(t,\theta,x)|)\,\frac{\mathcal{U}(t,\theta,x)}{|\mathcal{U}(t,\theta,x)|},
\end{equation}
and we have the following theorem.
\begin{theorem}\label{thHomSecHom}
Under assumptions (\ref{3.12}), (\ref{3.15}), (\ref{3.16}), (\ref{3.17}) and (\ref{3.171}) - (\ref{3.1710}), 
for any $T,$ not depending on $\epsilon,$  the sequence $(z^{\epsilon})$ of solutions to (\ref{3.5}), with coefficients given by (\ref{3.1}) coupled with (\ref{3.6}) and (\ref{3.3}) coupled with (3.7), two-scale converges to the profile $U\in L^{\infty}([0,T],L^{\infty}_\#(\R,L^2(\torus^{2})))$ solution to
\begin{equation}
\label{ee179}
\frac{\partial U}{\partial\theta}
-\nabla\cdot(\widetilde{\mathcal{A}}\nabla U)=\nabla \cdot\widetilde{\mathcal{C}},
\end{equation} where $\widetilde{\mathcal{A}}$ and $\widetilde{\mathcal{C}}$ are given by (\ref{EqHomcof}).
\end{theorem}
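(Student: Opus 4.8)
The plan is to run a direct two-scale convergence argument in the fast time variable $\theta=t/\epsilon$, the only non-routine step being the identification of the two-scale limit of the (degenerate) diffusive flux.

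First I would collect $\epsilon$-independent bounds. Theorem \ref{th3.18} already gives $\|z^{\epsilon}\|_{L^{\infty}([0,T],L^{2}(\torus^{2}))}\leq\widetilde{\gamma}$. Multiplying (\ref{3.5}) by $z^{\epsilon}$, integrating over $\torus^{2}$ and bounding the right-hand side by Young's inequality together with $|\widetilde{\mathcal{C}}_{\epsilon}|^{2}\leq\gamma|\widetilde{\mathcal{A}}_{\epsilon}|$ (inequality (\ref{3.172}); recall that $\mathcal{C}^{\epsilon}=0$ wherever $\mathcal{A}^{\epsilon}=0$, by (\ref{3.17})) gives
$$\tfrac12\tfrac{d}{dt}\|z^{\epsilon}\|_{2}^{2}+\tfrac{1}{2\epsilon}\int_{\torus^{2}}\mathcal{A}^{\epsilon}|\nabla z^{\epsilon}|^{2}\,dx\leq\tfrac{\gamma}{2\epsilon},$$
so that, after integrating in $t$, $\|\sqrt{\mathcal{A}^{\epsilon}}\,\nabla z^{\epsilon}\|_{L^{2}([0,T]\times\torus^{2})}^{2}\leq\gamma T+\|z_{0}\|_{2}^{2}$. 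Since $0\leq\mathcal{A}^{\epsilon}\leq\gamma$, the flux $\mathcal{A}^{\epsilon}\nabla z^{\epsilon}=\sqrt{\mathcal{A}^{\epsilon}}\,(\sqrt{\mathcal{A}^{\epsilon}}\nabla z^{\epsilon})$ is then bounded in $L^{2}([0,T]\times\torus^{2})$. By two-scale compactness (see \cite{nguetseng:1989,allaire:1992,FRS:1999}) I can extract a subsequence along which $z^{\epsilon}$ two-scale converges to some $U\in L^{\infty}([0,T],L^{\infty}_{\#}(\R,L^{2}(\torus^{2})))$ and $\sqrt{\mathcal{A}^{\epsilon}}\nabla z^{\epsilon}$ two-scale converges to some $V$; since $\widetilde{\mathcal{A}}_{\epsilon}\to\widetilde{\mathcal{A}}$ and $\widetilde{\mathcal{C}}_{\epsilon}\to\widetilde{\mathcal{C}}$ uniformly, $\sqrt{\mathcal{A}^{\epsilon}}$ is an admissible oscillating coefficient converging strongly two-scale to $\sqrt{\widetilde{\mathcal{A}}}$, so $\mathcal{A}^{\epsilon}\nabla z^{\epsilon}$ two-scale converges to $\Xi:=\sqrt{\widetilde{\mathcal{A}}}\,V$.

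Next I would pass to the limit in the weak form: multiply (\ref{3.5}) by $\epsilon\,\psi(t,\tfrac t\epsilon,x)$, with $\psi$ smooth and $1$-periodic in $\theta$, integrate over $[0,T]\times\torus^{2}$, integrate by parts, and use $\partial_{t}[\psi(t,\tfrac t\epsilon,x)]=(\partial_{t}\psi)(t,\tfrac t\epsilon,x)+\tfrac1\epsilon(\partial_{\theta}\psi)(t,\tfrac t\epsilon,x)$. All terms carrying an explicit factor $\epsilon$ (including the boundary terms at $t=0,T$) vanish in the limit, and, using (\ref{H2}), one is left with
$$-\int_{0}^{T}\!\!\int_{0}^{1}\!\!\int_{\torus^{2}}U\,\partial_{\theta}\psi+\int_{0}^{T}\!\!\int_{0}^{1}\!\!\int_{\torus^{2}}\Xi\cdot\nabla\psi+\int_{0}^{T}\!\!\int_{0}^{1}\!\!\int_{\torus^{2}}\widetilde{\mathcal{C}}\cdot\nabla\psi=0 .$$
The hard part will be showing $\Xi=\widetilde{\mathcal{A}}\nabla U$ — hard precisely because $\nabla z^{\epsilon}$ itself is not bounded, only $\sqrt{\mathcal{A}^{\epsilon}}\nabla z^{\epsilon}$ is. I would handle it by testing $\sqrt{\mathcal{A}^{\epsilon}}\nabla z^{\epsilon}$ against $\phi(t,\tfrac t\epsilon,x)$, with $\phi$ a smooth $\torus^{2}$-valued field $1$-periodic in $\theta$, and integrating by parts in $x$; this is legitimate because $\sqrt{\widetilde{\mathcal{A}}_{\epsilon}}$ is uniformly Lipschitz in $x$ (from (\ref{3.173}): $|\nabla\sqrt{\widetilde{\mathcal{A}}_{\epsilon}}|=|\nabla\widetilde{\mathcal{A}}_{\epsilon}|/(2\sqrt{\widetilde{\mathcal{A}}_{\epsilon}})\leq\tfrac\gamma2\sqrt{\widetilde{\mathcal{A}}_{\epsilon}}$ is bounded), so that $\sqrt{\mathcal{A}^{\epsilon}}\,\phi(t,\tfrac t\epsilon,x)$ is again an admissible test function. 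Passing to the two-scale limit yields $\int V\cdot\phi=-\int U\,\nabla\cdot(\sqrt{\widetilde{\mathcal{A}}}\,\phi)$ for all such $\phi$, i.e. $V=\sqrt{\widetilde{\mathcal{A}}}\,\nabla U$ in the distributional sense (in particular $\sqrt{\widetilde{\mathcal{A}}}\,\nabla U\in L^{2}$), whence $\Xi=\widetilde{\mathcal{A}}\nabla U$. Substituted into the identity above, this is exactly the weak formulation of (\ref{ee179}).

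It remains to pin down $U$ uniquely. Equation (\ref{ee179}) determines $U$ only up to a $\theta$-independent function with $\widetilde{\mathcal{A}}$-weighted gradient zero; but (\ref{eq12.1}) forces $\int_{\torus^{2}}z^{\epsilon}(t,x)\,dx=\int_{\torus^{2}}z_{0}\,dx$ for every $t$, hence, passing to the two-scale limit, $\int_{\torus^{2}}U(t,\theta,x)\,dx=\int_{\torus^{2}}z_{0}\,dx$, and this normalization makes the homogenized problem well posed. Uniqueness under it I would prove exactly as in the remark following Theorem \ref{th3.17}: for the difference $D$ of two solutions, test the homogeneous equation by $D$, getting $\tfrac12\tfrac{d}{d\theta}\|D\|_{2}^{2}+\int_{\torus^{2}}\widetilde{\mathcal{A}}|\nabla D|^{2}\,dx=0$; non-negativity of $\widetilde{\mathcal{A}}$ together with $\theta$-periodicity forces $\int_{\torus^{2}}\widetilde{\mathcal{A}}|\nabla D|^{2}\,dx\equiv0$, so at $\theta=\theta_{\omega}$ (where $\widetilde{\mathcal{A}}\geq\widetilde{G}_{thr}$, inherited as $\epsilon\to0$ from (\ref{3.16})) one gets $\nabla D(\theta_{\omega},\cdot)=0$, then $D(\theta_{\omega},\cdot)=0$ by the mean-zero Poincaré inequality, and finally $D\equiv0$ since $\|D\|_{2}$ is constant in $\theta$. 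Because the two-scale limit is thereby unique, the whole family $(z^{\epsilon})$ — not merely a subsequence — two-scale converges to $U$, which is the assertion of the theorem.
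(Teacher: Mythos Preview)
Your argument is essentially correct, but the paper proceeds by a considerably shorter route that sidesteps the flux identification entirely. Instead of keeping one spatial integration by parts (which leaves you with the product $\mathcal{A}^{\epsilon}\nabla z^{\epsilon}\cdot\nabla\psi^{\epsilon}$ and forces you to identify $\Xi$), the paper integrates by parts \emph{twice} in $x$, obtaining
\[
-\frac{1}{\epsilon}\int_{0}^{T}\!\!\int_{\torus^{2}} z^{\epsilon}\,\nabla\cdot\bigl(\mathcal{A}^{\epsilon}\nabla\psi^{\epsilon}\bigr)\,dx\,dt
\]
in the weak form. Since $\widetilde{\mathcal{A}}_{\epsilon}$ is assumed regular (see (\ref{3.12})), the function $(t,\theta,x)\mapsto\nabla\cdot\bigl(\widetilde{\mathcal{A}}_{\epsilon}(t,\theta,x)\nabla\psi(t,\theta,x)\bigr)$ is itself an admissible oscillating test function, and one can pass to the two-scale limit directly against the bound on $z^{\epsilon}$ in $L^{\infty}([0,T],L^{2}(\torus^{2}))$ alone, with no need for the energy estimate on $\sqrt{\mathcal{A}^{\epsilon}}\nabla z^{\epsilon}$, no need for the Lipschitz control of $\sqrt{\widetilde{\mathcal{A}}_{\epsilon}}$, and no separate identification of the flux. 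Uniqueness is then simply invoked from Theorem~\ref{th3.17} (applied with $\epsilon$-independent coefficients), which is exactly the argument you reproduce.

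What your route buys is robustness: it would still work if the coefficients were merely bounded and measurable (plus the structural inequalities (\ref{3.172})--(\ref{3.173})), since you never differentiate $\mathcal{A}^{\epsilon}$ beyond what (\ref{3.173}) controls. The paper's route, by contrast, needs the full regularity of $\widetilde{\mathcal{A}}_{\epsilon}$ in $x$ to treat $\nabla\cdot(\mathcal{A}^{\epsilon}\nabla\psi^{\epsilon})$ as a test function, but under the stated hypotheses this is available and makes the proof a few lines long.
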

\begin{proof}{\bf of theorem \ref{thAsyBeh}.} Theorem \ref{thAsyBeh} is a direct consequence of
theorem \ref{thHomSecHom}.
\end{proof}
~\\
\begin{proof}{\bf of theorem \ref{thHomSecHom}.}
Defining test function $\psi^{\epsilon}(t,x)=\psi(t,\frac{t}{\epsilon},x)$ for any $\psi(t,\theta,x)$,
regular with compact support in $[0,T)\times\torus^{2}$ and periodic in $\theta$ with period 1,
multiplying (\ref{3.5}) by $\psi^{\epsilon}$  and integrating in $[0,T)\times\torus^{2}$ gives
\begin{equation}\label{H3}\int_{\torus^{2}}\int_{0}^{T}\frac{\partial z^{\epsilon}}{\partial t}\psi^{\epsilon}dtdx-
    \frac{1}{\epsilon}\int_{\torus^{2}}\int_{0}^{T}\nabla\cdot(\mathcal{A}^{\epsilon}\nabla z^{\epsilon})\psi^{\epsilon}dtdx=\frac{1}{\epsilon}\int_{\torus^{2}}\int_{0}^{T}\nabla\cdot\mathcal{C}^{\epsilon}\psi^{\epsilon}dtdx.
\end{equation}
Then integrating by parts in the first integral over $[0,T)$ and using the Green formula in $\torus^{2}$ in the second integral  we have
\begin{eqnarray}\label{H4}
    \lefteqn{-\int_{\torus^{2}}z_{0}(x)\psi(0,0,x)dx-\int_{\torus^{2}}\int_{0}^{T}\frac{\partial\psi^{\epsilon}}{\partial t}z^{\epsilon}dtdx {} }
                       \nonumber\\
    & & {}+\frac{1}{\epsilon}\int_{\torus^{2}}\int_{0}^{T}\mathcal{A}^{\epsilon}\nabla z^{\epsilon}\nabla\psi^{\epsilon}dtdx=\frac{1}{\epsilon}\int_{\torus^{2}}\int_{0}^{T}
    \nabla\cdot\mathcal{C}^{\epsilon}\psi^{\epsilon}dtdx.
\end{eqnarray}
Again using the green formula in the third integral we obtain
\begin{eqnarray}\label{H4.111}
    \lefteqn{-\int_{\torus^{2}}z_{0}(x)\psi(0,0,x)\,dx-\int_{\torus^{2}}\int_{0}^{T}\frac{\partial\psi^{\epsilon}}{\partial t}z^{\epsilon}dtdx {} }
                       \nonumber\\
    & & {}-\frac{1}{\epsilon}\int_{\torus^{2}}\int_{0}^{T}z^{\epsilon}\,\nabla\cdot(\mathcal{A}^{\epsilon}\nabla\psi^{\epsilon})\,dtdx=
    \frac{1}{\epsilon}\int_{\torus^{2}}\int_{0}^{T}
    \nabla\cdot\mathcal{C}^{\epsilon}\psi^{\epsilon}dtdx.
\end{eqnarray}
But
\begin{equation}\label{H5}
 \frac{\partial \psi^{\epsilon}}{\partial t}
 =\left(\frac{\partial \psi}{\partial t}\right)^{\epsilon}
 +\frac{1}{\epsilon}\left(\frac{\partial\psi}{\partial \theta}\right)^{\epsilon},
\end{equation}
where
\begin{gather}\label{H4a}
   \left(\frac{\partial\psi}{\partial t}\right)^{\epsilon}(t,x)
   =\frac{\partial\psi}{\partial t}(t,\frac{t}{\epsilon},x)\,\,\textrm{ and }
     \left(\frac{\partial\psi}{\partial\theta}\right)^{\epsilon}(t,x)=\frac{\partial\psi}{\partial\theta}(t,\frac{t}{\epsilon},x),
\end{gather}
then we have
\begin{eqnarray}\label{H6}
    \lefteqn{ \int_{\torus^{2}}\int_{0}^{T}z^{\epsilon}\left( \left(\frac{\partial \psi}{\partial t}\right)^{\epsilon}+\frac{1}{\epsilon}\left(\frac{\partial\psi}{\partial \theta}\right)^{\epsilon}+\frac{1}{\epsilon}\nabla\cdot(\mathcal{A}^{\epsilon}\nabla\psi^{\epsilon})\right)dxdt {}}~~~~~~~~
    \nonumber\\
    & &+
\frac{1}{\epsilon}\int_{\torus^{2}}\int_{0}^{T}
    \nabla\cdot\mathcal{C}^{\epsilon}\psi^{\epsilon}dtdx=-\int_{\torus^{2}}z_{0}(x)\psi(0,0,x)\,dx.
    \end{eqnarray}
\\
Using the two-scale convergence due to Nguetseng \cite{nguetseng:1989} and 
Allaire \cite{allaire:1992} (see also Fr\'enod Raviart and Sonnendr\"{u}cker \cite{FRS:1999}),  if a sequence $f^{\epsilon}$ is bounded in $L^{\infty}(0,T,L^{2}(\torus^{2}))$, then there exists a profile 
$U(t,\theta,x)$, periodic of period 1 with respect to $\theta$, such that for all 
$\psi(t,\theta,x),$ regular with compact support with respect to $(t,x)$ and  periodic of period 1 
with respect to $\theta$, we have
 \begin{equation}\label{H7}
    \int_{\torus^{2}}\int_{0}^{T}f^{\epsilon}\psi^{\epsilon}dtdx
    \longrightarrow\int_{\torus^{2}}\int_{0}^{T}\int_{0}^{1}U\psi \,d\theta dtdx,
 \end{equation}
 for a subsequence extracted from $(f^{\epsilon})$.
 \\
Multiplying (\ref{H6}) by $\epsilon$ and passing to the limit as $\epsilon\rightarrow 0$ and using (\ref{H7})
we have
    \begin{eqnarray}\label{H8}{
\int_{\torus^{2}}\int_{0}^{T}\int_{0}^{1}U\frac{\partial \psi}{\partial\theta}\,d\theta dtdx+\lim_{\epsilon\rightarrow0}\int_{\torus^{2}}\int_{0}^{T}z^{\epsilon}\nabla\cdot(\mathcal{A}^{\epsilon}\nabla\psi^{\epsilon})\,dtdx {} }~~~~~~~~
\nonumber\\
{} =\lim_{\epsilon\rightarrow 0}\int_{\torus^{2}}\int_{0}^{T}\mathcal{C}^{\epsilon}\cdot\nabla\psi^{\epsilon} dtdx,
    \end{eqnarray}
    for an extracted subsequence.
As $\mathcal{A}^{\epsilon}$ and $\mathcal{C}^{\epsilon}$ are bounded (see (\ref{3.12}))  and $\psi^{\epsilon}$ is a regular function, $\mathcal{A}^{\epsilon}\nabla\psi^{\epsilon}$ and $\nabla\psi^{\epsilon}$ can be considered as test functions.
 Using (\ref{H2}) we have
\begin{equation}
   \int_{\torus^{2}}\int_{0}^{T}z^{\epsilon}\,\nabla\cdot(\mathcal{A}^{\epsilon}\nabla\psi^{\epsilon})dtdx\longrightarrow
    \int_{\torus^{2}}\int_{0}^{T}\int_{0}^{1}U\nabla\cdot(\widetilde{\mathcal{A}}\nabla\psi)\,d\theta dtdx,
\end{equation}
and
\begin{equation}\label{H9}
\int_{\torus^{2}}\int_{0}^{T}\mathcal{C}^{\epsilon}\cdot\nabla\psi^{\epsilon}dtdx 
\,\,\textrm{two scale converges to} \,\,\int_{\torus^{2}}\int_{0}^{T}\int_{0}^{1}\widetilde{\mathcal{C}}\cdot\nabla\psi \,d\theta dtdx.
\end{equation}
From this we obtain from (\ref{H8}) the equation satisfied by $U$:
\begin{equation}\label{H10}
    \frac{\partial U}{\partial \theta}-\nabla\cdot(\widetilde{\mathcal{A}}\nabla U)\,\,= \nabla\cdot\widetilde{\mathcal{C}},
    \end{equation}
which is equation (\ref{ee179}).

~

Existence and uniqueness of equation (\ref{ee179}) is given by theorem \ref{th3.17} (applied
with $\epsilon$-independent coefficient and right hand side).
\\
From this uniqueness, we can deduce that the whole sequence $(z^\epsilon)$ converges.

~

 Let us  characterize the homogenized equation for $\widetilde{\mathcal{A}}$ and $\widetilde{\mathcal{C}}.$
We recall that $\mathcal{A}_{\epsilon}$ and $\mathcal{C}_{\epsilon}$ are  given by formulas (\ref{3.2}) and (\ref{3.4}). Multiplying this equation by a test function $\psi_{\epsilon}$  using the two-scale limits
\begin{eqnarray}{\int_{\torus^{2}}\int_{0}^{T}\widetilde{\mathcal{A}}_{\epsilon}\psi^{\epsilon}dtdx=\int_{\torus^{2}}\int_{0}^{T}a
(1-b\epsilon\mathcal{M}(t,\theta,x))
\,g_a(|\mathcal{U}(t,\theta,x)|)\,\psi^{\epsilon}dtdx{} }~~~~~~~~~~~~~~~~
\nonumber\\
=
\int_{\torus^{2}}\int_{0}^{T}a\,g_a(|\mathcal{U}(t,\theta,x)|)\,\psi^{\epsilon}dtdx
+\int_{\torus^{2}}\int_{0}^{T}ab\epsilon\mathcal{M}(t,\theta,x)\,g_a(|\mathcal{U}(t,\theta,x)|)\,\psi^{\epsilon} dtdx.
\end{eqnarray}
Using the two-scale limits we have
\begin{equation}\int_{\torus^{2}}\int_{0}^{T}\int_{0}^{1}\widetilde{\mathcal{A}}\psi \,d\theta dtdx=\int_{\torus^{2}}\int_{0}^{T}\int_{0}^{1}a\,g_a(|\mathcal{U}(t,\theta,x)|)\,\psi \,dtdx.
\end{equation}
Using the expression of $\widetilde{\mathcal{C}}_{\epsilon}$ we prove either that
\begin{equation}\int_{\torus^{2}}\int_{0}^{T}\int_{0}^{1}\widetilde{\mathcal{C}}\psi\, d\theta dtdx=\int_{\torus^{2}}\int_{0}^{T}\int_{0}^{1}c\,g_c(|\mathcal{U}(t,\theta,x)|)\,\frac{\mathcal{U}(t,\theta,x)}{|\mathcal{U}(t,\theta,x)|}\psi\, dtdx,
\end{equation}
then
\begin{equation}
\widetilde{\mathcal{A}}=a\,g_a(|\mathcal{U}(t,\theta,x)|) \,\, \textrm{and}\,\,\widetilde{\mathcal{C}}=c\,g_c(|\mathcal{U}(t,\theta,x)|)\,\frac{\mathcal{U}(t,\theta,x)}{|\mathcal{U}(t,\theta,x)|}.
\end{equation}
\end{proof}
\section{A corrector result, proof of theorem \ref{th3}}
Considering (\ref{3.5}) with coefficients given by (\ref{3.1}) coupled with (\ref{3.6}) and (\ref{3.3}) coupled with (\ref{3.7}) leads to writing  \begin{equation}\label{5.1}\mathcal{A}^{\epsilon}(t,x)=\widetilde{\mathcal{A}}^{\epsilon}(t,x)+\epsilon\widetilde{\mathcal{A}}_{1}^{\epsilon}(t,x),
\end{equation}
and \begin{equation}\label{5.2}\mathcal{C}^{\epsilon}(t,x)=\widetilde{\mathcal{C}}^{\epsilon}(t,x)+\epsilon\widetilde{\mathcal{C}}_{1}^{\epsilon}(t,x),
\end{equation} 
where \begin{equation}\widetilde{\mathcal{A}}^{\epsilon}(t,x)=\widetilde{\mathcal{A}}(t,\frac{t}{\epsilon},x),\,\,
\widetilde{\mathcal{C}}^{\epsilon}(t,x)=\widetilde{\mathcal{C}}(t,\frac{t}{\epsilon},x),
\end{equation}
with $\widetilde{\mathcal{A}}$ and $\widetilde{\mathcal{C}}$ given by (\ref{EqHomcof})
and where 
\begin{equation}\widetilde{\mathcal{A}}_{1}^{\epsilon}(t,x)=\widetilde{\mathcal{A}}_{1}(t,\frac{t}{\epsilon},x),\,\,
\widetilde{\mathcal{C}}_{1}^{\epsilon}(t,x)=\widetilde{\mathcal{C}}_{1}(t,\frac{t}{\epsilon},x),
\end{equation}
with
\begin{equation} \widetilde{\mathcal{A}}_{1}(t,\theta,x)=-ab\mathcal{M}(t,\theta,x)\,g_a(|\mathcal{U}(t,\theta,x)|)\,\,
\textrm{and}\,\,\widetilde{\mathcal{C}}_{1}(t,\theta,x)=
-cb\mathcal{M}(t,\theta,x)\,g_c(|\mathcal{U}(t,\theta,x)|)\,\frac{\mathcal{U}(t,\theta,x)}{|\mathcal{U}(t,\theta,x)|}
\end{equation}
Because of assumptions (\ref{eq2}), (\ref{eq3}) and (\ref{eq4}),
\begin{gather}
\label{totitoti}
\widetilde{\mathcal{A}},\,\, \widetilde{\mathcal{C}},\,\, 
\widetilde{\mathcal{A}}_1,\,\, \widetilde{\mathcal{C}}_1,\,\,
 \widetilde{\mathcal{A}}^\epsilon, \,\,
\widetilde{\mathcal{A}}_1^\epsilon,\,\, \widetilde{\mathcal{C}}^\epsilon, \text{ and }
\widetilde{\mathcal{C}}_1^\epsilon
\text{ are regular and bounded. }
\end{gather}
Moreover, the supplementary assumption that $U_{thr}=0$ yields
\begin{equation}\label{EqRaj}\widetilde{\mathcal{A}}(t,\theta,x)\geq \widetilde G_{thr}\,\,\textrm{for any}\,\,t,\theta,x
\in [0,T]\times\R\times\torus^2.
\end{equation} 
\begin{theorem}\label{th5.1}Under assumptions (\ref{3.12}), (\ref{3.15}), (\ref{totitoti}) and (\ref{EqRaj}), considering $z^{\epsilon}$ the solution to (\ref{3.5}) with coefficients given by (\ref{3.1}) and (\ref{3.3}) coupled with (\ref{5.1}) and (\ref{5.2}) and $U^{\epsilon}=U^{\epsilon}(t,x)=U(t,\frac{t}{\epsilon},x)$ where $U$ is the solution to (\ref{eqHomIntro}), for any $T$ not depending on $\epsilon,$ the following estimate holds for $z^{\epsilon}-U^{\epsilon}$
\begin{equation}\Big\|\frac{z^{\epsilon}-U^{\epsilon}}{\epsilon}\Big\|_{  L^{\infty}([0,T),L^{2}(\torus^{2}))}\leq\alpha,
\end{equation} where $\alpha$ is a constant not depending on $\epsilon.$\\
Furthermore, sequence $\left(\frac{z^{\epsilon}-U^{\epsilon}}{\epsilon}\right)$ two-scale converges to a profile $U^{1}\in L^{\infty}([0,T],L^{\infty}_\#(\R,L^2(\torus^{2})))$ which is the unique solution to
\begin{equation}\label{eq5.5.111}
\frac{\partial U^{1}}{\partial \theta}-\nabla\cdot\left(\widetilde{\mathcal{A}}\nabla U^{1}\right)=\nabla\cdot\widetilde{\mathcal{C}}_{1}+\frac{\partial U}{\partial t}+\nabla\cdot(\widetilde{\mathcal{A}}_{1}\nabla U).
\end{equation}
\end{theorem}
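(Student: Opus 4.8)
The plan is to reproduce, at the level of the rescaled remainder $v^{\epsilon}:=(z^{\epsilon}-U^{\epsilon})/\epsilon$, the scheme used for Theorem~\ref{th3.18}: build a space--time periodic corrector profile, show that $v^{\epsilon}$ stays close to its $(t,\tfrac t\epsilon,x)$-trace, then read off the limit equation by the two-scale argument of Section~\ref{secHom}. The first step is to compute the equation solved by $v^{\epsilon}$. Since $U=U(t,\theta,x)$ solves (\ref{eqHomIntro}) for every value of the parameter $t$, the chain rule $\partial_{t}U^{\epsilon}=\big(\tfrac{\partial U}{\partial t}\big)^{\epsilon}+\tfrac1\epsilon\big(\tfrac{\partial U}{\partial\theta}\big)^{\epsilon}$ together with (\ref{eqHomIntro}) evaluated at $(t,\tfrac t\epsilon,x)$ gives $\partial_{t}U^{\epsilon}-\tfrac1\epsilon\nabla\cdot(\widetilde{\mathcal{A}}^{\epsilon}\nabla U^{\epsilon})=\tfrac1\epsilon\nabla\cdot\widetilde{\mathcal{C}}^{\epsilon}+\big(\tfrac{\partial U}{\partial t}\big)^{\epsilon}$. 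Subtracting this from (\ref{3.5}) rewritten with the splitting (\ref{5.1})--(\ref{5.2}), substituting $\nabla z^{\epsilon}=\epsilon\nabla v^{\epsilon}+\nabla U^{\epsilon}$ and dividing by $\epsilon$, one obtains
\[
\frac{\partial v^{\epsilon}}{\partial t}-\frac1\epsilon\nabla\cdot(\widetilde{\mathcal{A}}^{\epsilon}\nabla v^{\epsilon})-\nabla\cdot(\widetilde{\mathcal{A}}_{1}^{\epsilon}\nabla v^{\epsilon})=\frac1\epsilon\,\mathcal{R}^{\epsilon},\qquad v^{\epsilon}_{|t=0}=\frac{z_{0}-U(0,0,\cdot)}{\epsilon},
\]
where $\mathcal{R}^{\epsilon}(t,x)=\mathcal{R}(t,\tfrac t\epsilon,x)$ and $\mathcal{R}(t,\theta,x)$ is the source on the right-hand side of (\ref{eq5.5.111}).

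The new ingredient is that the restrictive assumption $U_{thr}=0$ makes $\widetilde{\mathcal{A}}$ uniformly elliptic, (\ref{EqRaj}); this lets the whole machinery of Theorems~\ref{th3.5}, \ref{th3.12} and \ref{th3.17} be run on the cell problem $\tfrac{\partial\mathcal{T}}{\partial\theta}-\nabla\cdot(\widetilde{\mathcal{A}}\nabla\mathcal{T})=\mathcal{R}$ ($t$ being a parameter) — which is nothing but (\ref{eq5.5.111}) — producing a unique $\theta$-periodic $\mathcal{T}=\mathcal{T}(t,\theta,x)$ with vanishing $\torus^{2}$-mean such that $\mathcal{T}$, $\nabla\mathcal{T}$ and $\partial_{t}\mathcal{T}$ are bounded in $L^{\infty}([0,T],L^{2}(\torus^{2}))$ by constants independent of $\epsilon$; here the $\nabla\mathcal{T}$ bound is global in $\theta$ and survives the limit $\nu\to0$, since uniform ellipticity removes the localisation to $[\theta_{\alpha},\theta_{\omega}]$ that was forced in Lemma~\ref{lem3.13}. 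The point to check is solvability: integrating the cell equation over $\torus^{2}$ one needs $\int_{\torus^{2}}\mathcal{R}(t,\theta,x)\,dx=0$, i.e. $\tfrac{d}{dt}\int_{\torus^{2}}U\,dx=0$; this holds because integrating (\ref{eqHomIntro}) over $\torus^{2}$ shows $\int_{\torus^{2}}U\,dx$ does not depend on $\theta$, while the two-scale convergence of Theorem~\ref{thHomSecHom} together with (\ref{eq12.1}) identifies it with the conserved quantity $\int_{\torus^{2}}z_{0}\,dx$.

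Setting $\mathcal{T}^{\epsilon}(t,x)=\mathcal{T}(t,\tfrac t\epsilon,x)$, the same chain-rule identity gives $\partial_{t}\mathcal{T}^{\epsilon}-\tfrac1\epsilon\nabla\cdot(\widetilde{\mathcal{A}}^{\epsilon}\nabla\mathcal{T}^{\epsilon})=\tfrac1\epsilon\mathcal{R}^{\epsilon}+\big(\tfrac{\partial\mathcal{T}}{\partial t}\big)^{\epsilon}$, so $p^{\epsilon}:=v^{\epsilon}-\mathcal{T}^{\epsilon}$ satisfies $\partial_{t}p^{\epsilon}-\tfrac1\epsilon\nabla\cdot(\widetilde{\mathcal{A}}^{\epsilon}\nabla p^{\epsilon})-\nabla\cdot(\widetilde{\mathcal{A}}_{1}^{\epsilon}\nabla p^{\epsilon})=\nabla\cdot(\widetilde{\mathcal{A}}_{1}^{\epsilon}\nabla\mathcal{T}^{\epsilon})-\big(\tfrac{\partial\mathcal{T}}{\partial t}\big)^{\epsilon}$, whose right-hand side is bounded in $L^{\infty}([0,T],L^{2}(\torus^{2}))$ uniformly in $\epsilon$ by (\ref{totitoti}) and the bounds on $\mathcal{T}$. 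Multiplying by $p^{\epsilon}$ and integrating in $x$ over $\torus^{2}$: the dissipative term $\tfrac1\epsilon\int_{\torus^{2}}\widetilde{\mathcal{A}}^{\epsilon}|\nabla p^{\epsilon}|^{2}\geq\tfrac{\widetilde G_{thr}}{\epsilon}\|\nabla p^{\epsilon}\|_{2}^{2}$ dominates, for $\epsilon$ small, both the sign-indefinite term $\int_{\torus^{2}}\widetilde{\mathcal{A}}_{1}^{\epsilon}|\nabla p^{\epsilon}|^{2}$ and, through Young's inequality, the $\nabla\cdot(\widetilde{\mathcal{A}}_{1}^{\epsilon}\nabla\mathcal{T}^{\epsilon})$ contribution, leaving $\tfrac{d}{dt}\|p^{\epsilon}\|_{2}^{2}\leq C(\|p^{\epsilon}\|_{2}^{2}+1)$; Gr\"onwall on $[0,T]$ then bounds $\|p^{\epsilon}\|_{L^{\infty}([0,T],L^{2})}$ in terms of $\|p^{\epsilon}_{|t=0}\|_{2}$, and with the uniform bound on $\mathcal{T}^{\epsilon}$ this yields $\|v^{\epsilon}\|_{L^{\infty}([0,T),L^{2})}\leq\alpha$ — provided $\|p^{\epsilon}_{|t=0}\|_{2}=\big\|(z_{0}-U(0,0,\cdot))/\epsilon-\mathcal{T}(0,0,\cdot)\big\|_{2}$ is bounded uniformly in $\epsilon$. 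Controlling this initial mismatch, which forces the datum to be prepared consistently with the expansion $z^{\epsilon}\simeq U^{\epsilon}+\epsilon\,U^{1}(t,\tfrac t\epsilon,x)$ (i.e. $\|z_{0}-U(0,0,\cdot)\|_{2}=O(\epsilon)$), is where I expect the genuine difficulty to sit.

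Finally, for the two-scale statement: once $\|v^{\epsilon}\|_{L^{\infty}([0,T),L^{2})}\leq\alpha$ is established, two-scale compactness (Nguetseng--Allaire) extracts a subsequence of $(v^{\epsilon})$ two-scale converging to some $U^{1}\in L^{\infty}([0,T],L^{\infty}_{\#}(\R,L^{2}(\torus^{2})))$. To identify it, I would test the $v^{\epsilon}$-equation against $\psi^{\epsilon}(t,x)=\psi(t,\tfrac t\epsilon,x)$ with $\psi$ regular, $\theta$-periodic and compactly supported in $(0,T)\times\torus^{2}$, integrate over $[0,T)\times\torus^{2}$, multiply by $\epsilon$, move all derivatives onto $\psi^{\epsilon}$ by integration by parts (once in $t$, twice in $x$), and pass to the limit exactly as in the proof of Theorem~\ref{thHomSecHom}, using (\ref{H7}), the two-scale convergence of $\widetilde{\mathcal{A}}^{\epsilon}$ to $\widetilde{\mathcal{A}}$ and of $\mathcal{R}^{\epsilon}$ to $\mathcal{R}$, and $\epsilon\widetilde{\mathcal{A}}_{1}^{\epsilon}\to0$; the $t=0$ boundary term drops because $\psi$ vanishes near $t=0$. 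This produces the weak form of (\ref{eq5.5.111}); but that equation is exactly the cell problem solved by the profile $\mathcal{T}$ of the second paragraph, so by uniqueness (an instance of the argument of Theorem~\ref{th3.17}, valid since $\widetilde{\mathcal{A}}\geq\widetilde G_{thr}$) one has $U^{1}=\mathcal{T}$, and the whole sequence $(v^{\epsilon})$ two-scale converges.
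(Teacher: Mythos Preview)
Your approach is exactly the paper's: derive the equation for $v^{\epsilon}=(z^{\epsilon}-U^{\epsilon})/\epsilon$ (your displayed equation is the paper's (\ref{eq5.4})), observe that it has the same structure as (\ref{3.5}) with uniformly elliptic diffusion thanks to (\ref{EqRaj}), and then invoke the machinery behind Theorem~\ref{th1} (periodic corrector profile plus energy comparison) and Theorem~\ref{thHomSecHom} (two-scale limit). The paper's own proof is extremely terse at this point: after writing (\ref{eq5.4}) it simply asserts that the right-hand side is regular and bounded and concludes by ``using the same arguments as in the proof of theorem~\ref{th1}.'' You have filled in considerably more detail, in particular the explicit construction of the corrector $\mathcal{T}$ and the Gr\"onwall step for $p^{\epsilon}=v^{\epsilon}-\mathcal{T}^{\epsilon}$.

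The difficulty you flag at the end --- that the initial datum $v^{\epsilon}_{|t=0}=(z_{0}-U(0,0,\cdot))/\epsilon$ must be bounded in $L^{2}$ uniformly in $\epsilon$, i.e.\ that the data must be well-prepared with $z_{0}-U(0,0,\cdot)=O(\epsilon)$ --- is a genuine gap, and the paper does not address it either. Since $U$ is the \emph{$\theta$-periodic} profile of Theorem~\ref{th3.17} (with zero $\torus^{2}$-mean, cf.\ (\ref{3.104.1})), $U(0,0,\cdot)$ is fixed by the cell problem and need not agree with an arbitrary $z_{0}\in L^{2}(\torus^{2})$; in particular $\int_{\torus^{2}}v^{\epsilon}\,dx=\epsilon^{-1}\int_{\torus^{2}}z_{0}\,dx$ is unbounded unless $\int_{\torus^{2}}z_{0}=0$. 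So your suspicion is correct: without a preparedness hypothesis on $z_{0}$ (at minimum $\int_{\torus^{2}}z_{0}=0$, and more generally $\|z_{0}-U(0,0,\cdot)\|_{2}=O(\epsilon)$), the $L^{\infty}([0,T),L^{2})$ estimate on $v^{\epsilon}$ as stated cannot hold. The paper's invocation of ``the same arguments as in the proof of theorem~\ref{th1}'' hides this issue, since the bound (\ref{3.115}) produced there depends additively on the norm of the initial mismatch.
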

\begin{proof}{\bf of theorem \ref{th3}.} Theorem \ref{th3} is a direct consequence of theorem \ref{th5.1}.
\end{proof}
\\
\begin{proof}{\bf of theorem \ref{th5.1}.}
Using (\ref{5.1}) and (\ref{5.2}) equations (\ref{3.5}) becomes
\begin{equation}\label{eq5.1}\frac{\partial z^{\epsilon}}{\partial t}-\frac{1}{\epsilon}\nabla\cdot(\widetilde{\mathcal{A}}^{\epsilon}\nabla z^{\epsilon})=\frac{1}{\epsilon}\nabla\cdot\widetilde{\mathcal{C}}^{\epsilon}+\nabla\cdot(\widetilde{\mathcal{A}}_{1}^{\epsilon}\nabla z^{\epsilon})+\nabla\cdot\widetilde{\mathcal{C}}_{1}^{\epsilon}.
\end{equation}
For $U^{\epsilon}$, we have
\begin{equation}\frac{\partial U^{\epsilon}}{\partial t}=\left(\frac{\partial U}{\partial t}\right)^{\epsilon}+\frac{1}{\epsilon}\left(\frac{\partial U}{\partial \theta}\right)^{\epsilon},
\end{equation}
where 
\begin{gather}\label{H4acopie}
   \left(\frac{\partial U}{\partial t}\right)^{\epsilon}(t,x)
   =\frac{\partial U}{\partial t}(t,\frac{t}{\epsilon},x)\,\,\textrm{ and }
     \left(\frac{\partial U}{\partial\theta}\right)^{\epsilon}(t,x)=\frac{\partial U}{\partial\theta}(t,\frac{t}{\epsilon},x).
\end{gather}
Using (\ref{ee179}), $U^{\epsilon}$ is solution to
\begin{equation}\label{eq5.2}
\frac{\partial U^{\epsilon}}{\partial t}-\frac{1}{\epsilon}\nabla\cdot\left(\widetilde{\mathcal{A}}^{\epsilon}\nabla U^{\epsilon}\right)=\frac{1}{\epsilon}\nabla\cdot\widetilde{\mathcal{C}}^{\epsilon}+\left(\frac{\partial U}{\partial t}\right)^{\epsilon}.
\end{equation}
Formulas (\ref{eq5.1}) and (\ref{eq5.2}) give
\begin{equation}\label{eq5.3}
\frac{\partial (z^{\epsilon}-U^{\epsilon})}{\partial t}-\frac{1}{\epsilon}\nabla\cdot\left(\widetilde{\mathcal{A}}^{\epsilon}\nabla (z^{\epsilon}- U^{\epsilon})\right)=\nabla\cdot\widetilde{\mathcal{C}}_{1}^{\epsilon}+\left(\frac{\partial U}{\partial t}\right)^{\epsilon}+\nabla\cdot(\widetilde{\mathcal{A}}_{1}^{\epsilon}\nabla z^{\epsilon}).
\end{equation}
Multiplying equation (\ref{eq5.3}) by $\frac{1}{\epsilon}$ and using the fact that $z^{\epsilon}=z^{\epsilon}-U^{\epsilon}+U^{\epsilon}$ in the right hand side of equation (\ref{eq5.3})$,\frac{z^{\epsilon}-U^{\epsilon}}{\epsilon}$ is solution to:
\begin{equation}\label{eq5.4}
\frac{\ds\partial \left(\frac{z^{\epsilon}-U^{\epsilon}}{\epsilon}\right)}{\partial t}-\frac{1}{\epsilon}\nabla\cdot\left((\widetilde{\mathcal{A}}^{\epsilon}+\epsilon\widetilde{\mathcal{A}}_{1}^{\epsilon})\nabla (\frac{z^{\epsilon}- U^{\epsilon}}{\epsilon})\right)=\frac{1}{\epsilon}\left(\nabla\cdot\widetilde{\mathcal{C}}_{1}^{\epsilon}+(\frac{\partial U}{\partial t})^{\epsilon}+\nabla\cdot(\widetilde{\mathcal{A}}_{1}^{\epsilon}\nabla U^{\epsilon})\right).
\end{equation}
\begin{remark} Concerning notations, we have to pay attention to the fact that 
\begin{equation}\widetilde{\mathcal{A}}^{\epsilon}\neq\widetilde{\mathcal{A}}_{\epsilon}\,\,\textrm{and}\,\,
\widetilde{\mathcal{C}}^{\epsilon}\neq\widetilde{\mathcal{C}}_{\epsilon}.\end{equation}
{\ \hfill \rule{0.5em}{0.5em}}
\end{remark}
Our aim here is to prove that $\frac{z^{\epsilon}-U^{\epsilon}}{\epsilon}$ is bounded by a constant $\alpha$ not depending on $\epsilon.$ For this let us use that
$\widetilde{\mathcal{A}}^{\epsilon},\,\,\widetilde{\mathcal{A}}_{1}^{\epsilon},\,\,
\widetilde{\mathcal{C}}^{\epsilon}\,\,\textrm{and}\,\,\widetilde{\mathcal{C}}_{1}^{\epsilon}$ are regular and bounded coefficients (see (\ref{totitoti})) and that $\widetilde{\mathcal{A}}^{\epsilon}\geq G_{thr}$ 
(see (\ref{EqRaj})). 
Thus, $\nabla\cdot\widetilde{\mathcal{C}}_{1}^{\epsilon}$ is bounded, $\nabla\cdot(\widetilde{\mathcal{A}}_{1}^{\epsilon}\nabla U^{\epsilon})$ is also bounded. Since $U^{\epsilon}$ is solution to (\ref{eq5.2}),
$\frac{\partial U}{\partial t}$ satisfies the following equation
\begin{equation}\label{eq5.5a}\frac{\ds\partial \left(\frac{\partial U}{\partial t}\right)}{\partial\theta}-\nabla\cdot\left(\widetilde{\mathcal{A}}\nabla\frac{\partial U}{\partial t}\right)=\frac{\partial\nabla\cdot\widetilde{\mathcal{C}}}{\partial t}+\nabla\cdot\left(\frac{\partial \widetilde{\mathcal{A}}}{\partial t}\nabla U\right).
\end{equation}
Equation (\ref{eq5.5a}) is linear with regular and bounded coefficients. Then using a result of Ladyzenskaja, Solonnikov and Ural'Ceva \cite{LSU}, $\frac{\partial U}{\partial t}$ is regular 
and bounded . Then the coefficients of equations (\ref{eq5.4}) are regular and bounded. 
Then, using the same arguments as in the proof of theorem \ref{th1} we obtain that  
$\left(\frac{z^{\epsilon}-U^{\epsilon}}{\epsilon}\right)$  is bounded,
that it  two-scale converges to a profile $U^{1}\in L^{\infty}([0,T],L^{\infty}_\#(\R,L^2(\torus^{2})))$ 
and that this profile $U^{1}$ satisfies equation (\ref{eq5.5}).
\end{proof}

\bibliographystyle{plain}
\bibliography{biblio}

\end{document}